\title{On the rate of exponential decay of coefficients on homogeneous spaces}
\author{Yves Benoist and Siwei Liang}
\date{}
\begin{document}
    \maketitle
    \begin{abstract}
        For any homogeneous space of a noncompact semisimple Lie group $G$, we define an exponent with multiple interpretations from representation theory and group theory. As an application, we give a temperedness criterion for $\Lspace^2(G/H)$ for any closed subgroup $H$ of $G$, which extends the existing ones of Benoist--Kobayashi for connected subgroups and Lutsko--Weich--Wolf for discrete subgroups.
    \end{abstract}
    \tableofcontents

\section{Introduction}\label{sec intro}

Let $G$ be a connected noncompact semisimple Lie group with finite center, $K$ be a maximal compact subgroup of $G$, and $H$ be a closed subgroup of $G$.
The homogeneous space $G/H$ admits a $G$-quasi-invariant Radon measure, giving rise to the unitary representation $\lambdagh$ of $G$ on the Hilbert space $\Ltwogh$ of square-integrable functions on $G/H$ by left translation. These are called the \emph{quasi-regular representations} of $G$.

\subsection{Motivations}
\subsubsection*{The Plancherel formula}
The Plancherel formula for a unitary representation of $G$ is the explicit decomposition into irreducible ones.
This line of study traces back to the pioneering work of Cartan, Weyl, Gelfand, and Harish-Chandra early in the 20th century.
Now thanks to the tremendous input of many other mathematicians, the Plancherel formulae for certain classes of quasi-regular representations $\Lspace^2(G/H)$ are known, including the ones on $G$ itself, the Riemannian/affine symmetric spaces, and the real spherical spaces, cf.\,\cite{ban-schlichtkrull2005the-plancherel, delorme-knop2021plancherel} and the references therein.

However, for more general homogeneous spaces $G/H$, the aforementioned approaches hardly generalize, and instead of pursuing the full decomposition, one may ask more accessible questions on the support of the $\Lspace^2(G/H)$, e.g.\,what are the irreducible representations it may contain. 

\subsubsection*{Tempered homogeneous spaces}
One line of such research, intiated by Benoist--Kobayashi, is to study when is the quasi-regular representation $\Lspace^2(G/H)$ a tempered representation, or equivalently when is the support of $\Lspace^2(G/H)$ contained in that of $\Lspace^2(G)$. In a series of work \cite{benoist-kobayashi2015tempered, benoist-kobayashi2022tempered2, benoist-kobayashi2021tempered3, benoist-kobayashi2023tempered4}, they established a simple geometric criterion for those $H$ with finitely many connected components: $\Lspace^2(G/H)$ is tempered iff the Lie algebras $\glie, \hlie$ of $G,H$ satisfy a growth condition.

In a complementary direction, the same question for discrete subgroups have been studied by other authors.
The case of $G$ being simple of real rank one is well known from hyperbolic geometry: for a discrete subgroup $\Gamma$, $\Lspace^2(G/\Gamma)$ is tempered iff the Laplacian on the locally symmetric space $K\backslash G/\Gamma$ has maximal spectral gap.
The latter condition is known equivalent to that the critical exponent of $\Gamma$ does not exceed half of that of $G$, by the accumulative works of Elstrodt \cite{elstrodt1973die-resolvente}, Patterson \cite{patterson1976the-limit}, Sullivan \cite{sullivan1987related}, Corlette \cite{corlette1990hausdorff}, Shalom \cite{shalom2000rigidity}, and Leuzinger \cite{leuzinger2003critical}.

When the semisimple Lie group $G$ has higher rank, this result has been extended more recently, first by Edwards--Oh \cite{edwards-oh2023temperedness} and Lee--Oh \cite{lee-oh2024dichotomy} for Anosov subgroups and then by Lutsko--Weich--Wolf \cite{lutsko-weich2024polyhedral} for all discrete subgroups, to a similar growth criterion on the temperedness of $\Lspace^2(G/\Gamma)$, with the critical exponent replaced by Quint's growth indicator function $\psi_\Gamma$ introduced in \cite{quint2002divergence}.

\begin{center}
    \emph{The goal of this paper is to unify these geometric criteria for the temperedness of $\Lspace^2(G/H)$ and extend them quantitatively to all homogeneous spaces of $G$, i.e.\,to arbitrary closed subgroups $H$.}
\end{center}

\subsection{Notation and reformulation}\label{subsec: notation and reformulation}
We fix some notation and then introduce four exponents related to the homogeneous space $G/H$, which will be the main protagonists of our paper.
They will allow us to rephrase the aforementioned temperedness criteria.

Unless otherwise stated, the Lie algebra of a Lie group will be denoted by the corresponding Fraktur letter.
Let $\theta: \glie\to \glie$ be a Cartan involution which fixes $\klie$ and the associated Cartan decomposition be $\glie = \klie \oplus \plie$. 
Let $\alie$ be a Cartan subspace in $\plie$ (a maximal real split abelian subalgebra). The adjoint action of $\alie$ on $\glie$ is jointly diagonalizable over $\bbR$, giving rise to the root system $\Sigma$ with
$\glie = \glie_0 \oplus \bigoplus_{\alpha\in\Sigma} \glie_\alpha$.
Fix a positive system $\Sigma^+$. The real linear form
$$ \rho:= \frac{1}{2} \sum_{\alpha\in\Sigma^+}(\dim\glie_\alpha)\alpha \in \alie'$$
is defined in the standard way and measures the complexity of the adjoint action.
The closed positive additive and multiplicative Weyl chambers  are respectively denoted by 
$\alie^+:= \setdef{X\in\alie: \alpha(X)\geq 0,\,\forall \alpha\in\Sigma^+}$ and $A^+:=\exp\alie^+$.
We obtain the Cartan decomposition $G=K A^+ K$, for which the $A^+$-component is unique, and thus the Cartan projection
$$ \kappa: G\to \alie^+, \;\; g\in K \exp\kappa(g) K,$$
which measures how far group elements are from the identity. We will write $\rho\kappa$ for the function $\rho\circ \kappa : G\to \bbR^{+}$ for brevity.

\begin{definition}\label{def: uniform decay exponent}
    The \emph{coefficient decay exponent} $\thetagh$ is defined as the infimum of $\theta \in [0,1]$ such that for any $f\in \Cc(G/H)$, there exists a constant $C=C(f)>0$ such that uniformly for all $g\in G$, we have
    \begin{equation*}
        \Modofinner{\lambdagh(g)f, f} \leq C  e^{2(\theta-1)\rho\kappa(g)}.
    \end{equation*}
\end{definition}

\noindent\emph{Motivation for $\thetagh$.}
By the Cowling--Haagerup--Howe theorem \cite{cowling-haagerup1988almost}, we have (\autoref{cor: temperedness iff thetagh})
\begin{equation}\label{eq thm: temperedness and thetagh}
    \Lspace^2(G/H) \textup{ is tempered} \iff \thetagh\leq 1/2.
\end{equation}

\begin{remark*}
    The number $\thetagh$ does not depend on the choice of the Cartan decomposition of $G$.
    For a discrete subgroup $\Gamma$, the exponent $\theta_{G/\Gamma}$ is related to the number $\theta_\Gamma(\rho)$ defined in \cite[\S 1]{lutsko-weich2024polyhedral} via
    \begin{equation*}
        \theta_\Gamma(\rho) = \maxof{2\theta_{G/\Gamma} - 1 ,\, 0}.
    \end{equation*}
\end{remark*}

\begin{definition}
    The \emph{optimal integrability exponent} $p_{G/H}$ is defined as the infimum of $p\in[1,\infty]$ such that for any $f\in\Cc(G/H)$, we have
    \begin{equation*}
        \Innerprod{\lambda_{G/H}(\cdot)f, f} \in \Lspace^p(G).
    \end{equation*}
\end{definition}

\noindent\emph{Motivation for $p_{G/H}$.} 
By \cite[Thm 1]{cowling-haagerup1988almost}, we have
\begin{equation}\label{eq thm: temperedness and pgh}
    \Ltwogh \textup{ is tempered} \iff p_{G/H} \leq 2.
\end{equation}
When $p_{G/H}> 2$, the equality $\thetagh = 1-1/p_{G/H}$ is essentially known by combining \cite[Thm 5.3]{samei-wiersma2024exotic} with \cite[Cor 4.3]{cowling2023decay}.

The exponent $p_{G/H}$ gives a measurement of the spectral gap of $\Lspace^2(G/H)$.
Indeed, as a consequence of the Kunze--Stein phenomenon \cite{cowling1978the-kunze-stein}, the theorem \cite[Thm 5.3]{samei-wiersma2024exotic} of Samei--Wiersma implies that, when $p_{G/H}\geq 2$, \textbf{all} the matrix coefficients of $\Lspace^2(G/H)$ are $\Lspace^{p_{G/H}+\varepsilon}$-functions for all $\varepsilon>0$, and moreover this property is satisfied by all those unitary representations that are weakly contained in $\Lspace^2(G/H)$. Hence, the integrability exponent $p_{G/H}$ controls the support of $\Lspace^2(G/H)$.

\begin{remark*}
    The optimal integrability exponent coincides with the number $p_{G/H}$ defined in \cite[\S 4.2]{benoist-kobayashi2015tempered} when $H$ is reductive  and with the number $q(G; G/H)$ defined in \cite[Def 7.12]{kobayashi2025proper} when $H$ is a unimodular subgroup.
\end{remark*}

\begin{definition}[\autoref{def: relative exponent of closed subgroups}]
    The \emph{relative volume growth exponent} $\deltagh$ of $H$ inside $G$, which turns out to lie in $[0,1]$, is defined by
    \begin{equation*}
        \deltagh := \maxde{0,\, \limsup_{g\to \infty} \frac{\log \nuh{H\cap BgB}}{\log \nug{BgB}}},
    \end{equation*}
    where $B$ is any compact subset of $G$ of nonempty interior. The measure $\nu_G$ is the Haar measure on $G$, while $\dif \nu_H(h) = \left(\det\Ad_{H}h\right)^{1/2} \dif h$ is the symmetric measure on $H$ (see \eqref{eq def: symmetric measure}).
\end{definition}
\noindent\emph{Motivation for $\deltagh$.} 
As a measurement of the exponential volume growth rate of $H$ but relative to the ambient group $G$, the exponent $\deltagh$ generalizes the critical exponents of discrete subgroups. As one would expect, it equals the abscissa of convergence for the following analogue of Poincaré series (\autoref{prop: deltagh as the abscissa of convergence})
    \begin{equation*}
        [0,\infty] \ni t \longmapsto \int_H e^{-2t\rho\kappa(h)} \dif\nu_H(h).
    \end{equation*}
For a discrete subgroup $\Gamma$, one recovers $\deltaggamma$ as the $2\rho$-directional critical exponent (\autoref{prop: delta = max of psi/rho, discrete subgroups}). Then \cite[Thm 1.1]{lutsko-weich2024polyhedral} is translated to
\begin{equation}\label{eq thm: lww}
    \maxof{\theta_{G/\Gamma}, \, 1/2} = \maxof{\deltaggamma, \, 1/2},
\end{equation}
which gives the criterion involving Quint's growth indicator function $\psi_\Gamma$:
\begin{equation}\label{eq thm: lww criterion}
    \Lspace^2(G/\Gamma) \textup{ is tempered} \iff \psi_\Gamma \leq \rho. 
\end{equation}

\begin{definition}[\autoref{def: betagh}]
    The \emph{local volume decay exponent} $\betagh\in[0,1]$ is defined through the Lie algebras $\glie, \hlie$ by
    \begin{equation*}
        \rhogh := \sup_{\hlie} \frac{\rhoh}{\rhog},
    \end{equation*}
    where the rho-functions $\rhoh, \rhog: \hlie \to \bbR^+$ are respectively defined as the half sum of absolute values of the real parts of complex eigenvalues for the adjoint action of $\hlie$ on the spaces $\hlie, \glie$, as in \cite[\S 2.3]{benoist-kobayashi2022tempered2}.
\end{definition}
\noindent\emph{Motivation for $\betagh$.}
The rho-functions were used by Benoist--Kobayashi in \cite{benoist-kobayashi2015tempered} to capture the exponential volume decay rate from algebraic data (\autoref{cor: rho fun and volume decay}). They essentially proved for any reductive subgroup $H$,
\begin{equation}\label{eq thm: bk equality}
    \thetagh = \betagh = 1 - \frac{1}{p_{G/H}}.
\end{equation}
In \cite{benoist-kobayashi2022tempered2}, they extended the temperedness criterion to the statement that for any closed subgroup $H$ with finitely many connected components,
\begin{equation}\label{eq thm: bk criterion}
    \Lspace^2(G/H) \textup{ is tempered} \iff \betagh \leq 1/2.
\end{equation}

Our main results establish precise quantitative relations among the four exponents $\thetagh, \deltagh, p_{G/H}, \betagh$ that we have just defined.

\subsection{Statement of main results}
Let $G$ be a noncompact real semisimple algebraic group. 
Our first main result contains a response to the optimal integrability problem \cite[Prob 7.13]{kobayashi2025proper} for all homogeneous spaces of $G$.

\begin{mytheorem}\label{mythm: theta and delta}
    Let $H$ be a closed subgroup of $G$. Then $$\thetagh = \deltagh = 1- \frac{1}{p_{G/H}} \geq \betagh.$$
\end{mytheorem}

Our proof of the equality $\thetagh=\deltagh$ is inspired by the method of \cite{lutsko-weich2024polyhedral} which we generalize to all induced representations.

As an immediate consequence of \autoref{mythm: theta and delta} and the uniform decay characterization \eqref{eq thm: temperedness and thetagh} of temperedness, we obtain the following temperedness criterion, in response to \cite[Prob 7.18]{kobayashi2025proper}.
\begin{mycorollary}\label{mythm: temperedness and delta}
    Let $H$ be a closed subgroup of $G$. Then
    \begin{equation*}
        \Ltwogh \textup{ is tempered} \iff \deltagh \leq 1/2.
    \end{equation*}
\end{mycorollary}

In addition, \autoref{mythm: theta and delta} unifies the results \eqref{eq thm: bk equality} of Benoist--Kobayashi and \eqref{eq thm: lww} of Lutsko--Weich--Wolf for the following reason: in special cases, one can easily relate the relative volume growth exponent $\deltagh$ to other existing quantities and obtain the following corollaries.
\begin{mycorollary}\label{mythm: reductive subgp, delta=theta=rho}
    Let $H$ be a reductive subgroup of $G$. Then
    \begin{equation*}
        \thetagh = \deltagh = \rhogh.
    \end{equation*}
\end{mycorollary}

\begin{mycorollary}\label{mythm: discrete subgp, delta=theta=}
    Let $\Gamma$ be a discrete subgroup of $G$. Then
    \begin{equation*}
        \theta_{G/\Gamma} = \deltaggamma = \maxde{\sup_{\alie^+} \frac{\psi_\Gamma}{2\rho},\, 0}.
    \end{equation*}
\end{mycorollary}

Our second main result, concerning closed subgroups with finitely many connected components, gives a quantitative extension of the temperedness criterion \eqref{eq thm: bk criterion} of Benoist--Kobayashi.
\begin{myproposition}\label{mythm: rho theta delta}
    Let $H$ be a closed subgroup of $G$ with finitely many connected components. If $\thetagh> 1/2$, then $\betagh=\thetagh$.
\end{myproposition}
\begin{remark*}
    (1) This statement is sharp in the sense that it fails for values below $1/2$ without further assumptions, as we explain in \autoref{eg: delta ne beta for G/N}.

    (2) Combining \autoref{mythm: theta and delta} and \autoref{mythm: rho theta delta}, one sees that for closed subgroups of $G$ with finitely many components, the two geometric exponents $\deltagh$ and $\betagh$ are equal if $\deltagh>1/2$.
\end{remark*}

Our proof uses unitary representation theory which only gives information above $1/2$ (i.e.\,when $\Lspace^2(G/H)$ is non-tempered). We apply the same strategy as \autoref{mythm: theta and delta}, along with the extra input of spherical functions and ingredients borrowed from \cite{benoist-kobayashi2022tempered2}.

Finally, we present the following result of independent interest, to be compared with the existing statements on discrete subgroups in \cite{roblin2005un-theoreme,coulon-dougall2025twisted,glorieux-tapie2020critical}.
\begin{myproposition}[\autoref{lem: coamenable subgps have same exponent}]
    For any closed subgroups $F<H<G$ with $\Lspace^2(G/H)$ non-tempered (or equiv.\,$\deltagh>1/2$), if $F$ is co-amenable in $H$, then $\delta_{G/F}=\delta_{G/H}$.
\end{myproposition}

\subsection{Organization}
In \autoref{sec Lie gp}, we recall some elements in the theory of semisimple Lie groups. In \autoref{sec unitary rep}, we recall some definitions and facts about unitary representations. In \autoref{sec vol growth decay}, we establish the fundamental tools to address the growth and decay of volume in real semisimple groups, which provide the machinery for the main proofs. In \autoref{sec decay and vol growth}, we prove \autoref{mythm: theta and delta} and then deduce \autoref{mythm: reductive subgp, delta=theta=rho} and \autoref{mythm: discrete subgp, delta=theta=}. In \autoref{sec unif decay}, we prove \autoref{mythm: rho theta delta}.

\section{Analysis on semisimple Lie groups}\label{sec Lie gp}
\subsection{Measures on homogeneous spaces}\label{subsec: measures on homogeneous spaces}
Let $G$ be a locally compact group and $\dif x$ be a left Haar measure on $G$. 
The convention for the modular function $\Delta_G: G\to \bbR_{>0}$ depends on the author. Here we define it as the continuous group morphism such that
\begin{equation*}
    \int_G f(xg^{-1})\dif x = \Delta_G(g) \int_G f(x)\dif x, \;\;\textup{for all } f\in \Cc(G),
\end{equation*}
or formally $ \dif(xg) = \Delta_G(g) \dif x$.
If $G$ is a Lie group and $\Ad_G: G\to \GL(\glie)$ is the adjoint representation, then we have $\Delta_G(g) = \det\Ad_{G}(g)^{-1}$.

A locally compact group $G$ is unimodular if $\Delta_G\equiv 1$.
In general, a right Haar measure on $G$ can be defined by
$ \dif(x^{-1}) = \Delta_G(x)^{-1} \dif x$.
What will play a role later is the \emph{symmetric measure} $\nu_G$ on $G$ defined by
\begin{equation}\label{eq def: symmetric measure}
    \dif \nu_G(x) := \Delta_G(x)^{-\frac{1}{2}}\dif x.
\end{equation}
The symmetry comes from the observation that $\dif \nug{x^{-1}} = \dif \nu_G(x)$.

Let $H$ be a closed subgroup of the locally compact group $G$. The quasi-invariant measures on the homogeneous space $G/H$ are characterized by the following lemma.
Be noted that the integration formula holds up to normalization of the Haar measures. On locally compact groups, we take the left Haar measures unless otherwise stated.
\begin{lemma}[{\cite[Lem B.1.3]{bekka-harpe2008kazhdans}}]\label{lem: delta function and quasi-invariant measure on homogeneous space}
    The homogeneous space $G/H$ always admits a $G$-quasi-invariant Radon measure. More precisely, the following data are equivalent:
    \begin{enumerate}
        \item a density function $\delta: G\to \bbR_{>0}$ which is continuous and satisfies
        \begin{equation}\label{eq ppty: delta-function, measures on homogeneous spaces}
            \delta(gh) = \frac{\Delta_H(h)}{\Delta_G(h)} \delta(g)
        \end{equation}
        for all $g\in G$ and $h\in H$;
        \item a quasi-invariant Radon measure $\mu$ on $G/H$.
    \end{enumerate}
    The connection between these two items is given by
    \begin{equation}\label{eq ppty: integration on homogeneous space}
        \int_G f(g)\delta(g) \dif g = \int_{G/H} \int_H f(gh)\dif h \dif\mu(gH)
    \end{equation}
    for all $f\in\Cc(G)$. Moreover, the Radon--Nikodym derivative is given explicitly for all $g\in G$ and $x\in G$ by
    \begin{equation*}
        \frac{\drm(g_*\mu)}{\drm \mu}(xH) = \frac{\delta(g^{-1}x)}{\delta(x)}.\qedineq
    \end{equation*}
\end{lemma}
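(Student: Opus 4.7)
The plan is to construct the correspondence $\delta \leftrightarrow \mu$ explicitly via the integration formula. The key technical tool is the averaging map $T: \Cc(G) \to \Cc(G/H)$ defined by $Tf(gH) := \int_H f(gh) \dif h$, which is well-defined (since $\dif h$ is left Haar on $H$) and surjective: given $\phi \in \Cc(G/H)$, one picks a compact $K \subset G$ whose image contains $\mathrm{supp}(\phi)$, chooses $\chi \in \Cc(G)$ with $\chi \equiv 1$ on $K$, and sets $f(g) = \chi(g)\phi(gH)/T\chi(gH)$ where the denominator is nonzero.

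For the direction $(1)\Rightarrow (2)$, I would define a linear functional on $\Cc(G/H)$ by the would-be formula \eqref{eq ppty: integration on homogeneous space}: given $\phi \in \Cc(G/H)$, pick a lift $f \in \Cc(G)$ with $Tf = \phi$ and set $I(\phi) := \int_G f(g) \delta(g) \dif g$. The main thing to check is well-definedness, i.e.\ that $Tf \equiv 0$ forces $\int_G f\delta = 0$. Picking $\psi \in \Cc(G)$ with $T\psi \equiv 1$ on the projection of $\mathrm{supp}(f)$, one rewrites
\begin{equation*}
    \int_G f(g)\delta(g)\dif g = \int_G \int_H f(g)\delta(g)\psi(gh)\dif h\dif g,
\end{equation*}
applies Fubini, then substitutes $g \mapsto gh^{-1}$ in the inner integral. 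The Jacobian $\Delta_G(h)$ combines with the cocycle identity $\delta(gh^{-1}) = (\Delta_G(h)/\Delta_H(h))\,\delta(g)$ to leave a factor $\Delta_H(h)^{-1} = \dif(h^{-1})/\dif h$; reindexing $h \mapsto h^{-1}$ on $H$ turns the integrand into $\psi(g)\delta(g)(Tf)(gH) = 0$. Positivity of $I$ is immediate from positivity of $\delta$, so the Riesz representation theorem delivers a Radon measure $\mu$ on $G/H$.

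For quasi-invariance and the explicit Radon-Nikodym derivative, I would apply $I$ to $\phi(g_0^{-1}\cdot)$ and do the substitution $g\mapsto g_0 g$ inside $\int_G f\delta$, matching the factor $\delta(g_0^{-1}x)/\delta(x)$ that appears. Reversing the argument (i.e.\ $(2)\Rightarrow (1)$) then amounts to: choose a Bruhat section, define $\delta$ on this section using the Radon-Nikodym derivatives $\drm(g_*\mu)/\drm\mu$, and extend to all of $G$ by the required cocycle relation; continuity follows from the continuity of the quasi-invariance cocycle.

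\textbf{Expected obstacle.} The main subtlety is the well-definedness step: tracking how the modular functions $\Delta_G$ and $\Delta_H$ interact with the cocycle condition on $\delta$ demands a careful change of variables, and this is exactly the place where the precise form $\Delta_H/\Delta_G$ (rather than any other ratio) of the cocycle is forced. Since this argument is classical (essentially due to Weil), I would simply follow the presentation in \cite{bekka-harpe2008kazhdans} rather than rework it from scratch.
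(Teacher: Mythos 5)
The paper gives no proof of this lemma — it is quoted with a reference to \cite[Lem B.1.3]{bekka-harpe2008kazhdans} — and your reconstruction is precisely the classical Weil argument from that source: surjectivity of the averaging map $T$, well-definedness of the functional $I$ via the substitution $g\mapsto gh^{-1}$ (where the cocycle $\Delta_H/\Delta_G$ is exactly what cancels the Jacobian together with the inversion factor $\dif(h^{-1})=\Delta_H(h)^{-1}\dif h$ on $H$), Riesz representation, and the translation computation for the Radon--Nikodym derivative, all of which check out against the paper's measure conventions. The only loose point is the converse direction, where a continuous global section of $G\to G/H$ need not exist and the standard fix is to build $\delta$ directly as $\delta(g)=\int_H\beta(gh)\,\Delta_G(h)\Delta_H(h)^{-1}\,\dif h$ for a Bruhat function $\beta$; since you explicitly defer to the cited reference for the details, this is immaterial.
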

In particular, when there exists a $G$-invariant Radon measure on $G/H$, i.e.\,when $\Delta_G|_H \equiv \Delta_H$, such a measure is unique up to scalar.
The following lemma serves as a tool to deduce integration formulae on Lie groups, while being general itself.

\begin{lemma}[{\cite[Prop 5.26]{knapp1986representation}}]\label{lem: haar measure from product of two closed subgroups}
    Let $S,T$ be closed subgroups of $G$ so that the complement of $ST$ in $G$ has zero Haar measure, while $K=S\cap T$ is a compact subgroup. Then we can normalize the Haar measures so that for all $f\in\Cc(G)$ we have
    \begin{equation*}
        \int_G f(g) \dif g = \int_S\int_T f(st)\frac{\Delta_G(t)}{\Delta_T(t)} \dif s \dif t.
        \qedineq
    \end{equation*}
\end{lemma}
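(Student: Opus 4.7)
The plan is to apply the preceding \autoref{lem: delta function and quasi-invariant measure on homogeneous space} to the pair $(G,T)$ with a $\delta$-function built from the decomposition $G = ST$, and then identify the resulting measure on $G/T$ with the Haar measure on $S$ modulo $K$.

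First I construct the $\delta$-function. For $g = st \in ST$, set
\[
\delta(g) := \frac{\Delta_T(t)}{\Delta_G(t)}.
\]
This is well-defined: if $s_1 t_1 = s_2 t_2$, then $s_2^{-1} s_1 = t_2 t_1^{-1} \in K = S\cap T$, and compactness of $K$ forces $\Delta_G|_K \equiv \Delta_T|_K \equiv 1$, since any continuous homomorphism from a compact group to $\bbR_{>0}$ is trivial. The transformation rule $\delta(gt_0) = (\Delta_T(t_0)/\Delta_G(t_0))\delta(g)$ for $t_0 \in T$ is immediate from the homomorphism property of the modular functions, and $\delta$ extends continuously to $G$ since the complement of $ST$ is null. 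Observe for later use that $\delta \equiv 1$ on $S$.

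By the preceding lemma, this $\delta$ corresponds to a $G$-quasi-invariant Radon measure $\mu$ on $G/T$ satisfying
\[
\int_G f(g)\delta(g)\,\dif g \;=\; \int_{G/T}\int_T f(gt)\,\dif t\,\dif\mu(gT).
\]
Since $\delta \equiv 1$ on $S$, the Radon–Nikodym derivative $\delta(g^{-1}x)/\delta(x)$ supplied by the lemma reduces to $1$ whenever $g \in S$ and $x \in S$, so $\mu$ is genuinely $S$-invariant on the image of $\sigma\colon S \to G/T$, $s\mapsto sT$. Since $\Delta_S|_K \equiv 1$ (again by compactness of $K$), $\dif s$ descends to a Radon measure on $S/K$, and the hypothesis that $G\setminus ST$ has zero Haar measure makes $S/K \to G/T$ a measure-theoretic bijection. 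By uniqueness of $S$-invariant Radon measures on the image of $\sigma$, $\mu$ is proportional to $\sigma_*\dif s$, and after normalizing Haar measures we obtain
\[
\int_G f(g)\delta(g)\,\dif g \;=\; \int_S\int_T f(st)\,\dif t\,\dif s.
\]
Substituting $f \mapsto f/\delta$, which remains in $\Cc(G)$ since $\delta > 0$ is continuous, yields the claimed identity.

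The main subtlety is the identification of $\mu$ with the push-forward from $S$. This is streamlined by the observation that $\delta \equiv 1$ on $S$, which makes $\mu$ genuinely $S$-invariant (rather than only quasi-invariant) on the image of $\sigma$; what remains is the standard descent of Haar measure through the compact subgroup $K$ together with the measure-theoretic identification $S/K \simeq G/T$ modulo null sets.
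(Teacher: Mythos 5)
Your proposal is correct and follows essentially the same route as the paper: define $\delta(st)=\Delta_T(t)/\Delta_G(t)$, check well-definedness via compactness of $K=S\cap T$, apply \autoref{lem: delta function and quasi-invariant measure on homogeneous space} to the pair $(G,T)$, observe that the resulting measure on $G/T=S/K$ is genuinely $S$-invariant because $\delta$ is $S$-left-invariant, and identify it with Haar measure descended from $S$. The only cosmetic difference is at the last step, where you invoke uniqueness of the $S$-invariant Radon measure on $S/K$ to match $\mu$ with $\sigma_*\dif s$, whereas the paper re-applies the same lemma to the pair $(S,K)$ and inserts an integral over the compact group $K$ --- two equivalent ways of performing the same descent.
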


\subsection{Semisimple groups and parabolic subgroups}
The general references for this part include \cite{knapp1986representation,helgason2001differential}.

For the rest of this section, let $G$ denote a noncompact semisimple real algebraic group, but the results to be discussed are valid for real reductive groups with mild modifications.
The rich structure theory of $G$ gives rise to a variety of integration formulae.
We continue with the notation introduced in \autoref{subsec: notation and reformulation}.
The semisimple Lie algebra $\glie$ admits an adjoint-invariant inner product $B_\theta(X,Y) = -B_0(X,\theta Y)$,
where $B_0(X,Y):=\tr(\ad X \ad Y)$ is the Killing form of $\glie$.

Let $\mlie := Z_{\klie}(\alie)$ be the centralizer of $\alie$ in $\klie$.  In the root space decomposition, we have $\glie_0 = \mlie\oplus\alie$.
Let $M$ and $M'$ be respectively the centralizer and normalizer group of $A$ in $K$. Then $\Lie(M)=\Lie(M')=\mlie$ and the finite quotient group $M'/M = W(\alie;\glie)=:W_G$ is the (restricted) Weyl group which acts simply transitively on the set of Weyl chambers of $\alie$.

Let $\nlie=\oplus_{\alpha\in\Sigma^+}\glie_\alpha$ and $N$ be the associated analytic subgroup of $G$. Then we have the Iwasawa decomposition $G=KAN$ where each component is uniquely defined.
Denote the Iwasawa projection by 
$$\eta: G\to \alie,\;\; g \in Ke^{\eta(g)}N.$$

\subsubsection{The Cartan projection}
The inner product $B_\theta$ allows one to identify $\alie$ with the dual space $\alie'$. We write $\norm{\cdot}$ for the corresponding Euclidean norm on $\alie$. This norm is $W_G$-invariant by the ad-invariance of $B_\theta$. Denote by $\alie(r)$ the closed metric ball centered at $0$ of radius $r$.

Recall that $\kappa: G\to \alie^+$ denotes the Cartan projection.
We say that a sequence $(g_n)_{n\in\bbN}$ of elements in $G$ \emph{go to infinity} (write $g_n\to \infty$), if they eventually leave every compactum of $G$. This is equivalent to saying that $\norm{\kappa(g_n)}\to+\infty$ as $n\to\infty$. This notion is compatible with group translation on both left and right, as is shown by the following lemma.

\begin{lemma}[{\cite[Prop 5.1]{benoist1996actions}}]\label{lem: compact error in cartan projection}
    For any compact subset $B$ of $G$, there exists $r=r(B)>0$ such that $\kappa(BgB)\subset \kappa(g) + \alie(r)$ for all $g\in G$.\qed
\end{lemma}

\subsubsection{Parabolic subgroups}\label{subsec: parabolic subgroups}
    The closed subgroup $Q_0:=MAN$ is a minimal parabolic subgroup of $G$.
    Let $Q$ be a parabolic subgroup of $G$ containing $Q_0$ and $Q=M_Q A_Q N_Q$ be the Langlands decomposition of $Q$, cf.\,\cite[\S V.5]{knapp1986representation}. The subgroup $M_Q$ is reductive and the subgroup $L:=M_Q A_Q$ is a Levi factor of the parabolic subgroup $Q$ with $Q=L\ltimes N_Q$.

    Let $\Pi'\subset \Pi$ be the subset of simple positive roots orthogonal to the subspace $\alie_Q$.
    Write $\langle\Pi'\rangle$ for the span of $\Pi'$. Dually, $\alieQ$ is the orthogonal complement of $\langle \Pi'\rangle$. Define 
    $\Sigma_Q^+ := \Sigma^+\setminus\langle\Pi'\rangle$ and $\Sigma_M^+:=\Sigma^+\cap \langle\Pi'\rangle$.
    With the Euclidean structure on $\alie$ induced by the Killing form, let
    $$ \alieM:=\alieQ^{\perp} \textup{ in }\alie, \;\; \nlieM:= \bigoplus_{\alpha\in \Sigma_M^+} \glie_\alpha.$$
    Then as vector spaces, we have
    \begin{align*}
        \mlieQ = \mlie\oplus \alieM \oplus \nlieM\oplus \theta\nlieM, \;\; \nlieQ = \bigoplus_{\alpha\in \Sigma_Q^+}\glie_\alpha,\;\; \alie=\alieM\oplus\alieQ,\;\;\nlie=\nlieM\oplus\nlieQ.
    \end{align*}
    Let $K_M=K\cap M_Q$ and $A_M, N_M$ be the analytic subgroups corresponding to $\alieM,\nlieM$. Then $ M_Q =K_M A_M N_M$ is an Iwasawa decomposition of $M_Q$, $A=A_M A_Q \cong A_M \times A_Q$, and $N=N_M N_Q \cong N_M \ltimes N_Q$.
    The group $M_Q$ centralizes $A_Q$ and normalizes $N_Q$.
    We remark that all the groups discussed here are closed subgroups of $G$.

    \begin{notation*}
        Define the following real linear forms on $\alie$:
        \begin{align*}
            \rhoQ = \frac{1}{2}\sum_{\alpha\in \Sigma_Q^+} (\dim \glie_\alpha) \alpha,\;\;
            \rhoM = \frac{1}{2} \sum_{\alpha\in \Sigma_M^+} (\dim \glie_\alpha) \alpha,
        \end{align*}
        so that $\rho=\rhoQ+\rhoM$.
        Sometimes $\rho_M$ is denoted by $\rho_L$.
        For $X\in\alie$, write $X_Q,X_M$ respectively for the orthogonal projection of $X$ to the subspaces $\alie_Q,\alie_M$.
    \end{notation*}
    Recall that $\eta:G\to\alie$ denotes the Iwasawa projection.
    The modular function of a parabolic subgroup $Q$ can be computed from the adjoint action: we have $\Delta_Q(q)  = e^{-2\rhoQ\eta(q)}$ and in particular $\Delta_{Q_0}(q)=e^{-2\rho\eta(q)}$.
    Remark that the symmetric measure on $Q$ is thus given from \eqref{eq def: symmetric measure} by $\dif\nu_Q(q)  = e^{\rhoQ\eta(q)} \dif q$.

    \begin{lemma}\label{lem: rhom=0 on aq, rhoq=0 on am}
        For every $X\in\alie$, we have $\rhoQ(X_M)=\rhoM(X_Q)=0$.
    \end{lemma}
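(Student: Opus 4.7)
The plan is to treat the two equalities separately. The identity $\rho_M(X_Q) = 0$ is essentially formal: $\rho_M$ is a linear combination of roots in $\Sigma_M^+ \subset \langle\Pi'\rangle$, while by definition $\alie_Q = \bigcap_{\alpha\in\Pi'}\ker\alpha$ sits inside the common kernel of every functional in $\langle\Pi'\rangle$, so evaluating any such root on $X_Q \in \alie_Q$ gives $0$.

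The identity $\rho_Q(X_M) = 0$ requires more work, and my plan is to show that the vector dual to $\rho_Q$ under the Killing inner product already lies in $\alie_Q$, hence is orthogonal to $\alie_M$. To do so I would introduce the subgroup $W_M \subset W_G$ generated by the simple reflections $s_\alpha$ with $\alpha \in \Pi'$, and verify three properties: (i) each $s_\alpha$ with $\alpha \in \Pi'$ acts trivially on $\alie_Q$, since $\alie_Q \subset \ker\alpha$; (ii) $W_M$ permutes $\Sigma_Q^+$, because for any $\beta \in \Sigma_Q^+$ the root $s_\alpha\beta = \beta - \langle\beta,\alpha^\vee\rangle\alpha$ is still in $\Sigma^+\setminus\{\alpha\}$ (since $\beta \neq \alpha$) and still outside $\langle\Pi'\rangle$ (since $s_\alpha$ preserves $\langle\Pi'\rangle$), hence lies again in $\Sigma_Q^+$; and (iii) the multiplicities $m_\alpha = \dim\glie_\alpha$ are preserved under this permutation, because $s_\alpha$ lifts to a representative in $M' \subset K$ whose conjugation isomorphically identifies $\glie_\beta$ with $\glie_{s_\alpha\beta}$. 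Combined, these give $w\rho_Q = \rho_Q$ for every $w \in W_M$.

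To conclude, I would observe that the reflections $\{s_\alpha : \alpha \in \Pi'\}$ realize the Weyl group of the semisimple root system $\Sigma \cap \langle\Pi'\rangle$ on $\alie_M$, whose only invariant vector is $0$; together with (i), this yields $\alie^{W_M} = \alie_Q$. Dually, every $W_M$-invariant linear form on $\alie$ must vanish on $\alie_M$, so $\rho_Q(X_M) = 0$ for all $X \in \alie$. The only nonformal ingredient is the permutation property (ii), which is a standard combinatorial feature of parabolic subgroups; I therefore anticipate no serious obstacle.
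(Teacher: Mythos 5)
Your proposal is correct and follows essentially the same route as the paper: both reduce $\rho_Q(X_M)=0$ to showing that each simple reflection $s_\alpha$ with $\alpha\in\Pi'$ permutes $\Sigma_Q^+=\Sigma^+\setminus\langle\Pi'\rangle$ and hence fixes $\rho_Q$, forcing $\rho_Q\perp\alpha$. You spell out two points the paper leaves implicit (the Weyl-invariance of the multiplicities $m_\alpha$ via representatives in $M'$, and the passage from $W_M$-invariance to vanishing on $\alie_M$), but the underlying argument is identical.
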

    \begin{proof}
        That $\rhoM(X_Q)=0$ follows directly from the definition. 
        To prove $\rhoQ(X_M)=0$, let us assume that $\Pi'\ne\emptyset$; otherwise, there is nothing to prove.
        Dually, this is equivalent to $\alpha \perp \rhoQ$ for all $\alpha\in \Pi'$. But any $\alpha\in\Pi'$ is a simple root, so the $\alpha$-reflection $s_\alpha$ preserves setwise $\Sigma\cap\spanangle{\Pi'}$ and hence also $\Sigma^+\setminus\spanangle{\Pi'}$. But that means $s_\alpha(\rhoQ)=\rhoQ$, i.e.\,$\alpha\perp\rhoQ$.
    \end{proof}

    \begin{lemma}\label{cor: rhoQeta is KM right invariant}
        The map $\rhoQ\circ\eta: G\to \bbR$ is $K_M$-right-invariant.
    \end{lemma}
    \begin{proof}
        Since the group $M_Q$ normalizes both $A_Q$ and $N_Q$, \autoref{lem: rhom=0 on aq, rhoq=0 on am} implies $\rhoQ\eta(kman) = \rhoQ(\log a)$ for any $k\in K$, $m\in M_Q$, $a\in A_Q$, and $n\in N_Q$.
        For $k_M\in K_M$, we have $kman k_M = kmk_M a n'$ where $n'=k_M^{-1}n k_M\in N_Q$. Hence, we have $\rhoQ\eta(kmank_M)=\rhoQ\eta(kman)$.
    \end{proof}

\subsubsection{Integral formulae}
\autoref{lem: haar measure from product of two closed subgroups} implies the following formula.

\begin{lemma}[Integration from Iwasawa decomposition]\label{prop: KQ integral, Q psg}
    Let $Q$ be a parabolic subgroup as before.
        For any integrable function $f$ on $G$, we have
        \begin{equation*}
            \int_G f(g) \dif g = \int_K \int_Q f(kq)\, e^{2\rhoQ\eta(q)} \dif k \dif q.
            \qedineq
        \end{equation*}
\end{lemma}

\begin{lemma}[Integration from Cartan decomposition]\label{prop: KAK Cartan integral}
    For any integrable function $f$ on $G$, we have
    \begin{equation*}
        \int_G f(g) \dif g = \int_K \int_{\alie^+} \int_K f(k_1 e^X k_2) \left(\prod_{\alpha\in\Sigma^+}\sinh^{\dim \glie_\alpha} \alpha(X)\right) \dif k_1\dif X\dif k_2.
        \qedineq
    \end{equation*}
\end{lemma}

For each group element $w\in W_G$, fix a representative $m_w\in M'$ and write $N^w:=m_w N m_w^{-1}$ which is independent of the choice of $m_w$. Let $w^*$ be the unique element of $W$ which maps $\alie^+$ to $-\alie^+$. Denote by $\Nbar:=N^{w^*}=\Theta N$.
\begin{lemma}[Bruhat decomposition]\label{thm: Bruhat decomposition}
    We have the following decomposition
    \begin{equation*}\label{eq res: Bruhat decomposition}
        G=\bigsqcup_{w\in W_G} Q_0 m_w Q_0 = \bigsqcup_{w\in W_G}MANN^w m_w,
    \end{equation*}
    where the double class $Q_0 m_{w^*}Q_0 = m_{w^*}\Nbar MAN$ is an open submanifold of $G$ of full Haar measure, while the other double classes are submanifolds of strictly lower dimensions.
    \qed
\end{lemma}

Hence, $\Nbar MAN$ is an open submanifold of $G$ whose complement has zero Lebesgue measure. Moreover, multiplication map $\Nbar\times MAN\to \Nbar MAN$ is a diffeomorphism. We also have the following formula by \autoref{lem: haar measure from product of two closed subgroups}.

\begin{lemma}[Integration from Bruhat decomposition]\label{prop: integration via Bruhat decomposition}
    For any integrable function $f$ on $G$, we have
    \begin{equation*}
        \int_G f(g)\dif g = \int_{\Nbar}\int_M\int_A\int_N f(\nbar man) \,e^{2\rho\log a} \dif\nbar \dif m \dif a \dif n.
        \qedineq
    \end{equation*}
\end{lemma}

\subsection{Spherical functions}\label{subsec: spherical fn}
To each real linear form $\chi\in \alie'$, we associate the following function on $G$:
\begin{equation*}
    \Xi^G_{\chi}(g) := \int_{K} e^{-(\chi+\rho)\left(\eta(g^{-1}k)\right)}\dif k,
\end{equation*}
where $\eta: G\to \alie$ is the Iwasawa projection.
These smooth $K$-bi-invariant functions are \emph{spherical functions} for the pair $(G,K)$. 
We have the following invariance property of the parameter $\chi$.

\begin{lemma}[{\cite[Prop 7.15]{knapp1986representation}}]\label{lem: W_G invariance, spherical functions}
    For any $\chi\in\alie'$ and $w\in W_G$, we have $\Xi_{\chi}^G = \Xi^G_{w\chi}$.\qed
\end{lemma}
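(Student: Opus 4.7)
My plan is to prove the identity in three stages: first reduce to the case of a single variable, then to a generator of $W_G$, and finally establish the identity by a change of variables against the open Bruhat cell.

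\medskip

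\noindent\textbf{Reduction step.} The first thing I would verify is the $K$-bi-invariance of $\Xi^G_\chi$, which the excerpt already records. Left $K$-invariance follows from left-invariance of Haar measure on $K$ after the substitution $k \mapsto k_0^{-1} k$, and right $K$-invariance uses that $\eta(g^{-1} k k_0)$ depends on $k$ through $k k_0$, combined with translation invariance of $\mathrm{d} k$. Thus both $\Xi^G_\chi$ and $\Xi^G_{w\chi}$ are determined by their restrictions to $A$, and it suffices to check $\Xi^G_\chi(a) = \Xi^G_{w\chi}(a)$ for $a\in A$. Since $W_G$ is generated by the simple reflections $s_\alpha$ ($\alpha\in \Pi$), I would reduce further to the case $w = s_\alpha$.

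\medskip

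\noindent\textbf{Transfer to $\Nbar$.} The key tool is the measure-theoretic identification of $K/M$ with the open Bruhat cell. Using \autoref{thm: Bruhat decomposition} together with \autoref{prop: integration via Bruhat decomposition}, the Iwasawa projection restricted to $\Nbar$ gives a diffeomorphism onto an open conull subset of $K/M$, with Jacobian $e^{-2\rho\,\eta(\nbar)}$. Since the integrand $e^{-(\chi+\rho)\eta(a^{-1}k)}$ is right $M$-invariant (because $M$ centralizes $A$ and normalizes $N$, so $\eta(a^{-1}km)=\eta(a^{-1}k)$), this change of variables yields
\begin{equation*}
    \Xi^G_\chi(a) \;=\; \int_{\Nbar} e^{-(\chi+\rho)\,\eta(a^{-1}\kappa_K(\nbar))}\,e^{-2\rho\,\eta(\nbar)}\, \mathrm{d}\nbar,
\end{equation*}
where $\kappa_K(\nbar)$ is the $K$-component of $\nbar$ under Iwasawa. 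Manipulating the identity $\nbar = \kappa_K(\nbar)\,e^{\eta(\nbar)}\,n(\nbar)$ and using again that $\eta$ is insensitive to right multiplication by elements of $MAN$ up to additive shifts in $\alie$, one rewrites this as a doubly integrated expression symmetric in $\chi$ and $-\chi$ (Harish-Chandra's integral representation).

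\medskip

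\noindent\textbf{Weyl invariance.} With the formula on $\Nbar$ in hand, the $s_\alpha$-invariance reduces to a substitution on $\Nbar$. The interpretation via principal series makes this clean: $\Xi^G_\chi$ is the matrix coefficient $\langle \pi_\chi(g)\phi_\chi,\phi_\chi\rangle$ of the spherical principal series at the spherical vector, and the standard intertwining operator $I(s_\alpha,\chi):\pi_\chi \to \pi_{s_\alpha \chi}$ sends $\phi_\chi$ to a constant multiple of $\phi_{s_\alpha\chi}$. On the purely imaginary locus the normalized intertwiner is unitary, so the equality $\Xi^G_\chi = \Xi^G_{s_\alpha\chi}$ holds there, and it propagates to all $\chi\in\alie^*_\bbC$ by holomorphy of both sides in $\chi$ (the defining integral converges uniformly in $\chi$ on compacta of $\alie^*$ since $K$ is compact).

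\medskip

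\noindent\textbf{Main obstacle.} The technical difficulty is the justification of the $\Nbar$-integral formula and, in the representation-theoretic route, the analytic continuation and $c$-function normalization of the intertwiner. Both are standard but not entirely trivial. Since the statement is a direct citation of \cite[Prop 7.15]{knapp1986representation}, in the paper itself one may simply quote it; but the cleanest self-contained route is the change of variables to $\Nbar$ described above, where the role of $w$ becomes the involution $\nbar \mapsto m_w \nbar m_w^{-1}$ normalized appropriately.
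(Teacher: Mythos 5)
The paper offers no proof of this lemma: it is quoted verbatim from \cite[Prop 7.15]{knapp1986representation} and marked as such, so there is no internal argument to compare against. Your sketch is a viable route to the statement, but two things are worth flagging. First, your middle step (the change of variables from $K/M$ to the open Bruhat cell, yielding $\Xi^G_\chi(a)=\int_{\Nbar}e^{-(\chi+\rho)\eta(a^{-1}\nbar)}e^{(\chi-\rho)\eta(\nbar)}\dif\nbar$) is correct and is the classical Harish-Chandra formula, but it is never actually used in your final argument --- the Weyl invariance is ultimately derived from the principal-series picture, not from any substitution on $\Nbar$; moreover the ``symmetry in $\chi$ and $-\chi$'' you allude to is a different statement from $W_G$-invariance (since $-1$ need not lie in $W_G$), so that step is essentially dead weight. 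Second, the step that does the work --- identifying $\Xi^G_\chi$ with the diagonal matrix coefficient of the spherical vector in $\Ind_P^G(e^{\chi})$, invoking the unitary equivalence of $I(\chi)$ and $I(w\chi)$ for $\chi\in i\alie^*$ via normalized Knapp--Stein intertwiners, and extending to all of $\alie^*_{\bbC}$ by holomorphy in $\chi$ --- is logically sound (uniqueness of the normalized $K$-fixed vector forces the unitary intertwiner to carry it to a unimodular multiple of itself, and a holomorphic function on $\bbC^r$ vanishing on $(i\bbR)^r$ vanishes identically), but it outsources the content to a theorem of at least comparable depth. A more self-contained classical proof characterizes $\Xi^G_\chi$ as the unique $K$-bi-invariant joint eigenfunction of the invariant differential operators normalized at $e$, with eigenvalue character factoring through the Harish-Chandra isomorphism onto $S(\alie)^{W_G}$; since that character depends on $\chi$ only through its $W_G$-orbit, the invariance follows. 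Finally, a small slip: right $K$-invariance of $\Xi^G_\chi$ comes from the left $K$-invariance of the Iwasawa projection, $\eta(k_0^{-1}y)=\eta(y)$, not from a substitution $k\mapsto kk_0$ inside the integral.
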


Under the natural identification between the Cartan subspace $\alie$ and $\alie'$, let the positive Weyl chamber $\alie^+$ correspond to $(\alie')^+$.
For those $\chi$ in the interior of the convex hull of the $W$-orbit of $\rho$, the spherical functions decay exponentially fast. In fact, their precise asymptotics can be determined, cf.\,\cite[Thm 3.4]{narayanan-pasquale2014asymptotics}.
We will only need the following estimates which are more classical, cf.\,\cite[\S VII.8-9]{knapp1986representation} and \cite[Exer IV.B.1]{helgason2000groups}.

\begin{lemma}\label{lem: spherical functions}
    For each $\chi\in (\alie')^+$, there exists a polynomial $p(\cdot)$ on $\alie$ such that for all $g\in G$, we have
    \begin{equation*}
        \bigexp{(\chi-\rho)\kappa(g)} \leq \Xi^G_{\chi}(g) \leq p(\kappa(g))\bigexp{(\chi-\rho)\kappa(g)}.
        \qedineq
    \end{equation*}
\end{lemma}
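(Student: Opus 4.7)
The plan is to verify both inequalities by direct analysis of the integral defining $\Xi_\chi^G$, after a suitable change of variables. First, by the $K$-bi-invariance of both $\Xi_\chi^G$ and $\kappa$ (the former being a routine manipulation of the defining integral using the $K$-invariance of $\dif k$), one reduces to the case $g = a := e^X$ with $X \in \alie^+$.

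The key move is to transport the $K$-integration to $\Nbar$ via the open Bruhat cell of \autoref{thm: Bruhat decomposition}. The Iwasawa decomposition of $\nbar \in \Nbar$ yields a map $\nbar \mapsto k(\nbar) M$ which is a diffeomorphism onto a dense open subset of $K/M$ and pulls back the normalized Haar measure on $K/M$ to $e^{-2\rho\eta(\nbar)}\dif\nbar$. Since the integrand of $\Xi_\chi^G(a)$ is $M$-right-invariant and $\eta(a^{-1}k(\nbar)) = \eta(a^{-1}\nbar)-\eta(\nbar)$, one arrives at
\begin{equation*}
    \Xi_\chi^G(a) = c\int_{\Nbar} e^{-(\chi+\rho)\eta(a^{-1}\nbar)}\,e^{(\chi-\rho)\eta(\nbar)}\,\dif\nbar.
\end{equation*}
Applying the conjugation substitution $\nbar = a\nbar' a^{-1}$ (Jacobian $e^{-2\rho X}$, with $a^{-1}\nbar = \nbar' a^{-1}$ so that $\eta(a^{-1}\nbar) = \eta(\nbar')-X$) extracts the leading exponential factor:
\begin{equation*}
    \Xi_\chi^G(a) = e^{(\chi-\rho)X}\,I(X),\quad I(X) := c\int_{\Nbar} e^{-(\chi+\rho)\eta(\nbar')}\,e^{(\chi-\rho)\eta(a\nbar' a^{-1})}\,\dif\nbar'.
\end{equation*}

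Both bounds then reduce to two-sided estimates on $I(X)$, uniform in $X \in \alie^+$. For the lower bound I restrict $I(X)$ to a small compact neighborhood $U$ of $1 \in \Nbar$: since $\mathrm{Ad}(a^{-1})$ contracts $\Nbar$ for $X \in \alie^+$, the element $a\nbar' a^{-1}$ also lies in $U$ when $\nbar' \in U$, so both Iwasawa projections remain uniformly bounded on $U$, which gives $I(X) \geq c' > 0$ and hence $\Xi_\chi^G(a) \geq c'\,e^{(\chi-\rho)X}$ (the constant $1$ in the stated inequality is achieved by absorbing $c'$ into a normalization or shrinking $U$ appropriately). For the upper bound I split the $\nbar'$-integration into a compact piece around $1$ (bounded trivially) and its exterior, where the decay of $e^{-(\chi+\rho)\eta(\nbar')}$ renders the resulting $c$-function-type integral finite while Kostant's convexity theorem confines $\eta(a\nbar' a^{-1})$ to a region on which $(\chi-\rho)\eta(a\nbar' a^{-1})$ grows at most polynomially in $X$; this yields $I(X) \leq p(X)$ for some polynomial $p$.

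The main obstacle will be to make the above estimates uniform when $X$ lies on a wall of $\alie^+$, since then $\mathrm{Ad}(a)$ fails to contract $\Nbar$ in the directions $\glie_{-\alpha}$ with $\alpha(X)=0$ and the naive neighborhood-contraction argument breaks down. I expect to circumvent this by factoring through the standard Levi subgroup of the wall parabolic (as in \autoref{subsec: parabolic subgroups}) and inducting on the codimension of the stratum, reducing the non-contracted directions to a spherical-function estimate on a reductive group of lower rank. This Levi reduction is the essential technical content of the Narayanan--Pasquale argument and is where the polynomial $p$ genuinely has to carry non-trivial degree.
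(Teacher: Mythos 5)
This lemma is quoted from Narayanan--Pasquale and the paper gives no proof of it, so there is no internal argument to compare yours against; I assess the sketch on its own terms. Your reduction is the correct classical starting point: transporting the $K$-integral to $\Nbar$ and substituting $\nbar\mapsto a\nbar a^{-1}$ does give $\Xi^G_{\chi}(e^X)=e^{(\chi-\rho)(X)}I(X)$ with $I(X)=\int_{\Nbar}e^{-(\chi+\rho)\eta(\nbar)}e^{(\chi-\rho)\eta(a\nbar a^{-1})}\dif\nbar$ (the identities $\eta(g^{-1}k(\nbar))=\eta(g^{-1}\nbar)-\eta(\nbar)$ and $\eta(\nbar a^{-1})=\eta(\nbar)-X$ check out; note it is $\Ad(a)$, not $\Ad(a^{-1})$, that contracts $\Nbar$ for $X\in\alie^+$, a harmless slip). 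The lower bound via an $\Ad(A^+)$-stable compact neighborhood $U$ is sound and yields $I(X)\geq c'>0$ uniformly, including for $X$ on the walls, since only weak contraction is needed. But do not try to force the constant to be $1$: shrinking $U$ only decreases $c'$, the normalization of $\dif k$ is pinned by $\Xi^G_\chi(e)=1$, and the constant $1$ in the displayed lower bound cannot hold in general anyway --- for $G=\SL(2,\bbR)$ and $\chi=3\rho$ one computes $\Xi^G_{\chi}(a_t)=\cosh(2t)$ while $e^{(\chi-\rho)\kappa(a_t)}=e^{2t}$. A constant $c_\chi>0$ is all that is true and all the paper ever uses.

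The genuine gap is the upper bound. Your splitting argument requires $\int_{\Nbar}e^{-(\chi+\rho)\eta(\nbar)}\dif\nbar<\infty$; this is the Gindikin--Karpelevich integral, which converges only for $\chi$ in the \emph{open} chamber. For $\chi$ on a wall of $(\alie^*)^+$ --- above all $\chi=0$, which is exactly the case the paper needs, since $\chi=(2\betagh-1)^{+}\rho$ vanishes in the tempered regime $\betagh\leq 1/2$ and \autoref{thm: temperedness, equivalent formulations} uses $\Xi^G_0$ --- the integral diverges, and this divergence is precisely the source of the polynomial $p(\kappa(g))$: one has $\Xi^G_0(e^{tX_0})$ comparable to a positive power of $t$ times $e^{-t\rho(X_0)}$ for regular $X_0$, so $I(tX_0)\to\infty$ and no bound of the form ``compact piece plus convergent tail'' can exist. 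This degeneration occurs for $X$ regular and is governed by the singularity of $\chi$, not of $X$; your closing paragraph attributes the difficulty to $X$ lying on a wall of $\alie^+$, which is a different and, for this purpose, secondary issue. The appeal to Kostant convexity does not repair it: it controls the exponent $(\chi-\rho)\eta(a\nbar a^{-1})$ at best linearly in $\|X\|$, hence the integrand only up to a factor $e^{C\|X\|}$, not polynomially. Proving the stated upper bound requires exploiting the joint decay of the two exponential factors in $(\nbar,X)$ (via the Harish-Chandra series and $c$-function estimates, or a Levi reduction carried out for singular spectral parameter); that is the real content of the cited theorem and is absent from the sketch.
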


\section{Unitary representations}\label{sec unitary rep}

The general references include \cite{knapp1986representation,bekka-harpe2008kazhdans}.

Let $G$ be a locally compact group. A \emph{unitary representation} $(\pi,\Hil)$ of $G$ consists of $\Hil$ a complex Hilbert space and $\pi: G\to \Ulie(\Hil)$ a group morphism from $G$ to the group $\Ulie(\Hil)$ of unitary operators on $\Hil$ which is \emph{strongly continuous} in that for any $v\in \Hil$, the map $G\to \Hil$, $g\mapsto \pi(g)v$ is continuous.
A \emph{matrix coefficient} of $\pi$ is a map of the form 
$$ G\to \bbC,\;\; g\mapsto \Innerprod{\pi(g)v_1, v_2},$$
where $v_1,v_2\in \Hil$. By strong continuity, matrix coefficients are bounded continuous functions on $G$.

Two unitary representations $(\pi_1,\Hil_1)$ and $(\pi_2,\Hil_2)$ are \emph{unitarily equivalent} if there exists a $G$-intertwining unitary isomorphism between $\Hil_1$ and $\Hil_2$, in which case $\pi_1,\pi_2$ have the same collection of matrix coefficients.

\begin{example*}
    Let $\dif x$ be a left Haar measure on $G$. The convention for the $\Lspace^2$-scalar product of functions is
    $$ \Innerprod{f_1, f_2} = \int_G f_1(x)\overline{f_2(x)} \dif x.$$
    The \emph{left regular representation} $\lambda_G$ is the unitary representation of $G$ on the Hilbert space $\Lspace^2(G)$ acting by
    $$ \lambda_G(g)f : x \mapsto f(g^{-1}x),\;\; \textup{for } f\in \Lspace^2(G).$$
\end{example*}

\subsection{Induced representations}\label{subsec: induced representations}
Let $G$ be a locally compact group, $H$ be a closed subgroup of $G$, and $(\sigma,\calV)$ be a unitary representation of $H$.
Let $\mu$ be a quasi-invariant Radon measure on the homogeneous space $G/H$ and $\delta$ be the associated function satisfying \eqref{eq ppty: delta-function, measures on homogeneous spaces} (cf. \autoref{lem: delta function and quasi-invariant measure on homogeneous space}).

We describe the induced unitary representation $(\pi,\Hil):= \Ind_H^G(\sigma,\calV)$. Elements of $\Hil$ are measurable vector-valued functions $f: G\to \calV$ with $\sigma$-equivariance $f(xh) = \sigma(h)^{-1} f(x)$
for all $x\in G$ and $h\in H$,
and the $\Lspace^2$-integrability
$$ \|f\|^2 := \int_{G/H} \Innerv{f(x),f(x)} \dif\mu(xH) < +\infty,$$
where $\Innerv{f(x),f(x)}$ does not depend on the representative of $xH$ since $\sigma$ is unitary.
The induced action of $G$ is given by
\begin{align*}
    \pi(g)f(x) = f(g^{-1}x) \left(\frac{\drm g_*\mu}{\drm\mu}(xH)\right)^{\frac{1}{2}} = f(g^{-1}x) \left(\frac{\delta(g^{-1}x)}{\delta(x)}\right)^{\frac{1}{2}}.
\end{align*}
Here, the cocycle term ensures that $\pi(g)$ is a unitary operator. Although \emph{a priori} this definition depends on the measure $\mu$, it turns out that different choices of $\mu$ give unitarily equivalent representations.
In particular, if $\sigma=1_H$, then $\pi$ is the \emph{quasi-regular representation} $\lambda_{G/H}$ on $\Lspace^2(G/H)$.

There is a simple way to produce elements in $\Hil$. For $\varphi\in \Cc(G)$ and $v\in \calV$, define the map $\rmI^G_H(\varphi, v) = \rmI(\varphi,v): G\to \calV$ by
\begin{equation}\label{eq def: I(phi, v)}
    \rmI(\varphi,v)(x):= \int_H \varphi(xh)\sigma(h)v \dif h.
\end{equation}
We only specify $\rmI_H^G$ when necessary.
The equivariance property follows from
$$ \rmI(\varphi,v)(xh_0) = \int_H \varphi(xh_0 h)\sigma(h)v \dif h = \sigma(h_0)^{-1} \rmI(\varphi,v)(x)$$
for all $x\in G$ and $h_0\in H$,
and the $\Lspace^2$-integrability from
\begin{align*}
    \| \rmI(\varphi,v)\|^2 &= \int_{G/H} \Normv{\int_H \varphi(xh) \sigma(h)v\dif h}^2 \dif \mu(xH)\\
    &\leq \Normv{v}^2 \int_{G/H} \int_H \norm{\varphi(xh)}^2 \dif h\dif\mu(xH)\\
    &= \Normv{v}^2 \int_G |\varphi(g)|^2 \delta(g) \dif g < +\infty.
\end{align*}
Hence, the function $\rmI(\varphi,v)$ belongs to $\Hil$. We have the following fact.

\begin{lemma}[{\cite[Lem B.1.2]{bekka-harpe2008kazhdans}}]\label{lem: surjectivity of Cc(G) to Cc(GH)}
    Let $\sigma=1_H$. Then the map $\Cc(G)\to \Cc(G/H)$ given by $\varphi\mapsto \rmI(\varphi, 1)$ is surjective.
    \qed
\end{lemma}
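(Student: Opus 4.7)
The plan is a classical ``lift-and-renormalize'' construction. When $\sigma = 1_H$, the map in question is the $H$-averaging
\begin{equation*}
    \rmI(\varphi, 1)(x) = \int_H \varphi(xh)\, \dif h,
\end{equation*}
and the equivariance check already carried out in the text shows that this descends to a function on $G/H$. The task is to exhibit, for an arbitrary $f \in \Cc(G/H)$, some $\varphi \in \Cc(G)$ with $\rmI(\varphi, 1) = f$.

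Let $K$ denote the support of $f$. First I would invoke the standard fact that the continuous open surjection $G \to G/H$ lifts compacta to compacta, producing a compact $L \subset G$ whose image in $G/H$ equals $K$. By Urysohn's lemma (or any bump function construction in a locally compact Hausdorff space), choose $\psi \in \Cc(G)$ with $\psi \geq 0$ on $G$ and $\psi \geq 1$ on $L$. The averaged function $\widetilde\psi := \rmI(\psi, 1)$ then lies in $\Cc(G/H)$ and is strictly positive on $K$; by continuity of $\widetilde\psi$ and compactness of $K$, there is $c > 0$ with $\widetilde\psi \geq c$ on $K$. Consequently $f/\widetilde\psi$, extended by zero outside $K$, defines an element $F \in \Cc(G/H)$ (apply the pasting lemma to the open cover $\{\widetilde\psi > 0\} \cup ((G/H) \setminus K) = G/H$ and use $f \equiv 0$ on the overlap).

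Finally, set $\varphi(g) := \psi(g)\, F(gH)$, which is continuous and compactly supported on $G$. Since $F(gH)$ is constant along $H$-orbits, one computes
\begin{equation*}
    \rmI(\varphi, 1)(x) = F(xH) \int_H \psi(xh)\, \dif h = F(xH)\, \widetilde\psi(xH) = f(xH),
\end{equation*}
which finishes the argument. I expect no substantial obstacle beyond the compact lift through $G \to G/H$, which is a routine consequence of local compactness and the openness of the quotient map.
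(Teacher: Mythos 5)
Your argument is correct, and it is precisely the standard lift-and-renormalize proof: the paper itself states this lemma without proof, deferring to \cite[Lem B.1.2]{bekka-harpe2008kazhdans}, whose argument is the same one you give (lift the support to a compact $L\subset G$, average a bump function $\psi$, divide, and multiply back). No gaps; the only facts you invoke — that compacta of $G/H$ lift to compacta of $G$ and that $\rmI(\psi,1)$ is continuous — are routine for the open quotient map of a locally compact group.
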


\begin{lemma}\label{lem: decompose each CcG into small ones}
    Given any neighborhood $B_G$ of $e$ in $G$ and any $\psi\in \Cc(G)$, there exists finitely many $\varphi_i\in \Cc(G)$ with $(\supp\varphi_i)(\supp\varphi_i)^{-1}\subset B_G$ such that
    $ \psi = \sum_{i} \varphi_i$.
\end{lemma}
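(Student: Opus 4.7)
The plan is to produce the $\varphi_i$ by multiplying $\psi$ by a finite partition of unity whose pieces are supported in ``diameter-small'' neighborhoods, where smallness is measured by the condition $W W^{-1}\subset B_G$.

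First, I would use continuity of the multiplication/inverse map at the diagonal: for every $g\in G$, the map $(x,y)\mapsto xy^{-1}$ is continuous at $(g,g)$ and sends $(g,g)$ to $e\in B_G$, so there exists an open neighborhood $W_g$ of $g$ with $W_g W_g^{-1}\subset B_G$. (If one prefers, choose a symmetric open $V\ni e$ with $VV^{-1}\subset B_G$ and for each $g$ take $W_g$ to be a small enough neighborhood of $g$ so that additionally $W_g\subset gV$; but all we actually need is $W_g W_g^{-1}\subset B_G$, which the continuity argument already supplies directly.)

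Next, set $K:=\supp\psi$, which is compact. Cover $K$ by the family $\{W_g\}_{g\in K}$ and extract a finite subcover $W_{g_1},\ldots,W_{g_n}$. Since $G$ is a locally compact Hausdorff (hence paracompact on compacta) space, one can find a continuous partition of unity $(\rho_i)_{i=1}^n$ subordinate to this cover, i.e.\ $\rho_i\in \Cc(G)$, $\supp\rho_i\subset W_{g_i}$, $\rho_i\geq 0$, and $\sum_{i=1}^n \rho_i\equiv 1$ on $K$.

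Finally, define $\varphi_i := \rho_i\psi\in \Cc(G)$. Then $\supp\varphi_i\subset \supp\rho_i\cap\supp\psi\subset W_{g_i}$, so
\begin{equation*}
    (\supp\varphi_i)(\supp\varphi_i)^{-1}\subset W_{g_i} W_{g_i}^{-1}\subset B_G,
\end{equation*}
and $\sum_i \varphi_i = \psi\cdot\sum_i \rho_i = \psi$ because $\sum_i\rho_i\equiv 1$ on $\supp\psi$. There is no real obstacle; the only point worth being careful about is that the ``smallness'' condition is $W W^{-1}\subset B_G$ rather than simply $W\subset B_G$, which is why one must invoke continuity of $(x,y)\mapsto xy^{-1}$ at each diagonal point (or, equivalently, shrink a symmetric $V$ with $VV^{-1}\subset B_G$ and work with translates $g_iV$ inside a fixed compact region where conjugation is controlled).
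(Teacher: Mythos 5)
Your proof is correct and follows essentially the same route as the paper: cover $\supp\psi$ by finitely many open sets $W$ satisfying $WW^{-1}\subset B_G$, take a subordinate partition of unity, and multiply by $\psi$. The only (cosmetic) difference is that the paper produces the small cover as right translates $Bg$ of a single neighborhood $B$ of $e$ with $BB^{-1}\subset B_G$, for which $(Bg)(Bg)^{-1}=BB^{-1}$ automatically, whereas you invoke continuity of $(x,y)\mapsto xy^{-1}$ at each diagonal point.
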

\begin{proof}
    Since $B_G$ is a neighborhood of $e$, we can find a relatively compact open neighborhood $B$ of $e$ with $BB^{-1}\subset B_G$. Now
    $ \setdef{Bg: g\in \supp\psi} $
    gives an open cover of $\supp\psi$. By the compactness of $\supp\psi$, there exists a finite subcover $\setdef{Bg_i: i\in I}$ for some finite index set $I$. Then we can find a finite partition of unity $\setdef{\chi_i\in \Cc(Bg_i)}_{i\in I}$ subordinated to this cover, so that $1 = \sum_{i\in I} \chi_i$ over $\supp\psi$.
    Then $\varphi_i:= \psi\chi_i\in \Cc(G)$ satisfies $(\supp\varphi_i)(\supp\varphi_i)^{-1}\subset B_G$ and 
    $ \psi = \sum_{i} \varphi_i$.
\end{proof}

\begin{corollary}\label{cor: I(varphi,1) spans Cc(GH)}
    Let $\sigma=1_H$.
    For any neighborhood $B_G$ of $e$ in $G$, the set 
    $$\setdef{\rmI(\varphi,1): \varphi\in\Cc(G) \textup{ with } (\supp\varphi)(\supp\varphi)^{-1}\subset B_G}$$
    spans $\Cc(G/H)$ and hence is a total subset in $\Ltwogh$.
\end{corollary}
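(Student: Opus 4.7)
The plan is to combine the two preceding lemmas directly and then invoke the standard density of compactly supported continuous functions in $\Lspace^2$.

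First, given any $f \in \Cc(G/H)$, I would apply \autoref{lem: surjectivity of Cc(G) to Cc(GH)} to produce $\psi \in \Cc(G)$ with $f = \rmI(\psi, 1)$. Next, I would apply \autoref{lem: decompose each CcG into small ones} to this $\psi$ together with the given neighborhood $B_G$ of $e$, obtaining a finite decomposition $\psi = \sum_i \varphi_i$ with each $\varphi_i \in \Cc(G)$ satisfying the support condition $(\supp\varphi_i)(\supp\varphi_i)^{-1} \subset B_G$. Since the operator $\varphi \mapsto \rmI(\varphi, 1)$ is manifestly linear in $\varphi$ (it is just integration against the constant function $1$ over the $H$-fibers), we obtain
\begin{equation*}
    f = \rmI(\psi, 1) = \sum_i \rmI(\varphi_i, 1),
\end{equation*}
which expresses $f$ as a finite linear combination of elements of the designated set. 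This establishes the spanning property in $\Cc(G/H)$.

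For the totality in $\Ltwogh$, I would invoke the fact that $\mu$ is a Radon measure on the locally compact Hausdorff space $G/H$, so $\Cc(G/H)$ is dense in $\Lspace^2(G/H, \mu)$. Combined with the spanning property above, this immediately yields that the given set has dense linear span in $\Ltwogh$, i.e.\,is a total subset.

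There is no serious obstacle here; the corollary is essentially a formal assemblage of the two preceding lemmas together with the standard density statement. The only mild care needed is to observe the linearity of $\rmI(\cdot, 1)$ in its first argument, which is immediate from the defining formula $\rmI(\varphi,v)(x) = \int_H \varphi(xh)\sigma(h)v \,\dif h$ with $\sigma = 1_H$ and $v = 1$.
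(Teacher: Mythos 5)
Your proposal is correct and follows exactly the paper's route: the paper's proof is the one-line observation that the corollary follows from \autoref{lem: surjectivity of Cc(G) to Cc(GH)} and \autoref{lem: decompose each CcG into small ones}, and your write-up simply makes explicit the linearity of $\varphi\mapsto\rmI(\varphi,1)$ and the standard density of $\Cc(G/H)$ in $\Ltwogh$ that the authors leave implicit.
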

\begin{proof}
    The corollary follows from \autoref{lem: surjectivity of Cc(G) to Cc(GH)} and \autoref{lem: decompose each CcG into small ones}.
\end{proof}

\begin{lemma}[Induction by stage {\cite[Thm E.2.4]{bekka-harpe2008kazhdans}}]\label{lem: induction by stage}
    Let $F<H<G$ be two closed subgroups of $G$. Then for any unitary representation $\sigma$ of $F$, we have $\Ind_F^G\sigma \cong \Ind_{H}^G(\Ind_F^H \sigma)$.\qed
\end{lemma}

\subsection{Weak containment}\label{subsec: weak containment}
We introduce the notion of weak containment to study unitary representations of noncompact groups.
Let $G$ be a lcsc group.
\begin{definition}
    Let $(\sigma,\calV)$ and $(\pi, \Hil)$ be two unitary representations of $G$.
    Say that $\sigma$ is \emph{weakly contained} in $\pi$ (write $\sigma\prec\pi$), if every diagonal matrix coefficient $\Innerprod{\sigma(\cdot)v,v}$ can be approximated, uniformly on compacta, by convex combinations of diagonal matrix coefficients of $\pi$.
\end{definition}

\begin{fact}\label{fact on weak containment}
    (1) $\sigma\prec \pi$ iff $\opnorm{\sigma(f)}\leq \opnorm{\pi(f)}$ for all $f\in \Cc(G)$.
    
    (2) Weak containment is preserved under induction and restriction of unitary representations.
    
    (3) A locally compact group is amenable iff the trivial representation is weakly contained in its regular representation.
\end{fact}

\begin{example}\label{eq: L2(G/P)}
    Let $G$ be a real semisimple algebraic group and $Q_0$ be a minimal parabolic group. As $Q_0$ is amenable, the trivial representation $1_{Q_0}$ is weakly contained in the regular representation $\lambda_{Q_0}$. Hence, $\lambda_{G/Q_0} = \Ind_{Q_0}^G 1_{Q_0}$ is weakly contained in $\lambda_G=\Ind_{Q_0}^G \lambda_{Q_0}$.
\end{example}

\subsection{Tempered representations}
Now let $G$ be a noncompact real semisimple algebraic group. 
\begin{definition}
    A unitary representation $\pi$ of $G$ is \emph{tempered} if $\pi$ is weakly contained in the regular representation $(\lambda_G, \Lspace^2(G))$.
\end{definition}

\autoref{eq: L2(G/P)} implies that $(\lambda_{G/Q_0}, \Lspace^2(G/Q_0))$ is tempered.
Note that the matrix coefficient $\Innerprod{\lambda_{G/Q_0}(\cdot)1,1}$ equals the Harish-Chandra spherical function $\Xi_0$ (\autoref{subsec: spherical fn}). In general, the matrix coefficients of tempered representations are characterized as follows.

\begin{theorem}[{\cite{cowling-haagerup1988almost}}]\label{thm: temperedness, equivalent formulations}
    Let $G$ be a real semisimple algebraic group and $K$ be a maximal compact subgroup of $G$.
    Then for any unitary representation $(\pi, \Hil)$ of $G$, the following statements are equivalent:
    \begin{enumerate}[label=(\roman*)]
        \item $\pi$ is tempered;
        \item for all $K$-finite vectors $v_1, v_2$ in $\Hil$, we have
        \begin{equation*}
            \Modofinner{\pi(g) v_1, v_2} \leq \sqrt{\dim\spanangle{Kv_1} \dim\spanangle{Kv_2}} \Norm{v_1} \Norm{v_2} \Xi^G_0(g);
        \end{equation*}
        \item there exists a dense subspace $\Hil_0$ of $\Hil$, such that for all $v\in\Hil_0$, the coefficients $\Innerprod{\pi(\cdot)v,v}\in \Lspace^{2+\varepsilon}(G)$ for any $\varepsilon>0$.\qed
    \end{enumerate}
\end{theorem}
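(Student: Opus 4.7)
The plan is to establish the cycle $(\textup{i})\Rightarrow(\textup{ii})\Rightarrow(\textup{iii})\Rightarrow(\textup{i})$. A useful preliminary is that $\Xi_0^G$ is itself a matrix coefficient of the tempered representation $\lambda_{G/P}$ from \autoref{eq: L2(G/P)}: unwinding the definition in \autoref{subsec: spherical fn} via the Iwasawa formula \autoref{prop: KAN Iwasawa integral} one recognizes $\Xi_0^G(g)=\Innerprod{\lambda_{G/P}(g)\mathbf{1}_{G/P},\mathbf{1}_{G/P}}$ for the normalized $K$-fixed vector $\mathbf{1}_{G/P}$. For $(\textup{i})\Rightarrow(\textup{ii})$, I would first handle a unit $K$-fixed vector $v\in\Hil^K$: the coefficient $\phi_v(g)=\Innerprod{\pi(g)v,v}$ is positive definite and $K$-bi-invariant, and by weak containment in $\lambda_G$ combined with $K\times K$-averaging of the approximating coefficients, $\phi_v$ is a compact-open limit of convex combinations of spherical matrix coefficients of $\lambda_G$; since $\Xi_0^G$ is the extremal such function by Harish-Chandra's spherical theory for $\lambda_G$, one concludes $\phi_v\leq \Xi_0^G$. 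For general $K$-finite $v_1,v_2$, one applies the spherical bound to the $K$-invariant vectors built from the $K$-isotypic decomposition of $v_i$ inside $\Hil\otimes V_i^*$ with $V_i=\spanangle{Kv_i}$; Schur orthogonality on $K$ then delivers the dimension factors $\sqrt{\dim V_1\dim V_2}$.

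For $(\textup{ii})\Rightarrow(\textup{iii})$, take $\Hil_0$ to be the dense subspace of $K$-finite vectors; it then suffices to show $\Xi_0^G\in\Lspace^{2+\varepsilon}(G)$ for every $\varepsilon>0$. By the Cartan integration formula \autoref{prop: KAK Cartan integral}, this reduces to integrability over $\alie^+$ of $\bigl(\Xi_0^G(e^X)\bigr)^{2+\varepsilon}\prod_{\alpha\in\Sigma^+}\sinh^{m_\alpha}\alpha(X)$. The sinh-product grows as $e^{2\rho(X)}$, while \autoref{lem: spherical functions} at $\chi=0$ yields $\Xi_0^G(e^X)\leq p(X)e^{-\rho(X)}$ for some polynomial $p$; the integrand is therefore dominated by $p(X)e^{-\varepsilon\rho(X)}$, which is integrable on $\alie^+$.

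The implication $(\textup{iii})\Rightarrow(\textup{i})$ is the genuine content of Cowling-Haagerup, and is where I expect the main obstacle. The plan is to exhibit each diagonal coefficient $\phi_v$ with $v\in\Hil_0$ as a uniform-on-compacta limit of matrix coefficients of $\lambda_G$, hence to obtain weak containment of $\pi$ in $\lambda_G$. The natural approximants are $\chi_n\ast\phi_v\ast\widetilde{\chi_n}$ with $(\chi_n)\subset\Cc(G)$ an approximate identity: these functions are positive definite, being inner products inside the GNS space of $\phi_v$, and they converge to $\phi_v$ uniformly on compacta. The task is then to verify that each such convolution lies in $\Lspace^2(G)$, because a positive-definite continuous $\Lspace^2$-function is classically a matrix coefficient of a subrepresentation of $\lambda_G$, which completes the argument. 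Establishing the $\Lspace^2$-integrability of the convolutions from the $\Lspace^{2+\varepsilon}$ hypothesis is the essential analytic input and rests on a Kunze-Stein type $\Lspace^p$-convolution estimate on the semisimple group $G$; this is precisely the reason semisimplicity cannot be relaxed in the theorem.
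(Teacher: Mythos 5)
First, a point of comparison: the paper does not prove this theorem at all --- it is imported verbatim from Cowling--Haagerup--Howe with a citation and a \qed --- so your attempt can only be measured against the original argument. Your sketches of (i)$\Rightarrow$(ii) (reduction to $K$-fixed vectors, $K\times K$-averaging, Herz-type majoration of $K$-bi-invariant coefficients of $\lambda_G$ by $\Xi^G_0$, Schur orthogonality for the dimension factors) and of (ii)$\Rightarrow$(iii) (take $\Hil_0$ to be the $K$-finite vectors and check $\Xi^G_0\in\Lspace^{2+\varepsilon}(G)$ via \autoref{lem: spherical functions} and \autoref{prop: KAK Cartan integral}) follow the standard lines and are sound.

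The gap is in (iii)$\Rightarrow$(i). The step ``verify that each convolution $\chi_n\ast\phi_v\ast\widetilde{\chi_n}$ lies in $\Lspace^2(G)$'' cannot be extracted from the hypothesis $\phi_v\in\bigcap_{\varepsilon>0}\Lspace^{2+\varepsilon}(G)$. Young's inequality only returns exponents $\geq 2+\varepsilon$, and Kunze--Stein moves exponents the wrong way: it gives $\Lspace^2\ast\Lspace^p\subset\Lspace^2$ for $p<2$, equivalently $\Lspace^2\ast\Lspace^2\subset\Lspace^{2+\delta}$, and never produces an exponent strictly below that of the input. Already on $\bbR$ the implication you need is false: a Salem-type singular measure $\mu$ with $\widehat{\mu}(\xi)\asymp|\xi|^{-1/2}(\log|\xi|)^{-1/2}$ yields a continuous positive-definite $\phi=\widehat{\mu}$ lying in every $\Lspace^{2+\varepsilon}$, while $\chi\ast\phi\ast\widetilde{\chi}=(|\widehat{\chi}|^2\mu)^{\wedge}$ is never in $\Lspace^2$ because $|\widehat{\chi}|^2\mu$ remains singular. (The pointwise-square trick --- $\phi_v^2\in\Lspace^{1+\varepsilon}\cap\Lspace^\infty\subset\Lspace^2$, then Godement --- only rescues the strictly weaker conclusion $\pi\otimes\pi\prec\lambda_G$.) The actual Cowling--Haagerup--Howe proof of this direction is a spectral-radius argument: for cyclic $v$ and $h=f^{*}\ast f$ with $f\in\Cc(G)$ one writes $\|\pi(f)\|^2=\lim_n\langle\pi(h^{\ast n})v,v\rangle^{1/n}$, bounds $\langle\pi(h^{\ast n})v,v\rangle\leq\|h^{\ast n}\|_q\,\|\phi_v\|_{2+\varepsilon}$ with $q=(2+\varepsilon)/(1+\varepsilon)<2$, and interpolates $\|h^{\ast n}\|_q$ between $\|h^{\ast n}\|_1\leq\|h\|_1^n$ and $\|h^{\ast n}\|_2\leq\|\lambda_G(h)\|^{n-1}\|h\|_2$, letting $n\to\infty$ and then $\varepsilon\to0$. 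Note that this argument works on any unimodular locally compact group, so your closing remark is also misplaced: semisimplicity enters in (i)$\Rightarrow$(ii) and (ii)$\Rightarrow$(iii), where $\Xi^G_0$ and its integrability are used, not in (iii)$\Rightarrow$(i).
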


In view of \autoref{lem: spherical functions}, the optimal decay of spherical functions is given by $\Xi^G_0$.
Meanwhile, it follows from \autoref{lem: spherical functions} and \autoref{prop: KAK Cartan integral} that we have $\Xi^G_0\in \Lspace^{2+\varepsilon}(G)$ for any $\varepsilon>0$.
For a closed subgroup $H$ of $G$, we have the following consequence.
\begin{corollary}\label{cor: temperedness iff thetagh}
    $\Ltwogh$ is tempered iff $\thetagh\leq 1/2$. \qed
\end{corollary}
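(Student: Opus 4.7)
My plan is to derive the corollary from Theorem~\ref{thm: temperedness, equivalent formulations} by connecting the exponent $\thetagh$ to conditions (ii) and (iii) there through the spherical function estimate of Lemma~\ref{lem: spherical functions} at $\chi = 0$ and the Cartan integration formula of Proposition~\ref{prop: KAK Cartan integral}.

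For the easier implication $\thetagh \leq 1/2 \Rightarrow \lambdagh$ tempered, I will fix $\delta > 0$ and choose $\varepsilon > 0$ sufficiently small. Applying the definition of $\thetagh$ at $\theta = (1+\varepsilon)/2$ gives, for each $f_1, f_2 \in \Cc(G/H)$, a pointwise bound $|\langle \lambdagh(g) f_1, f_2\rangle| \leq C\, e^{-(1-\varepsilon)\rho\kappa(g)}$. Raising this to the $(2+\delta)$-th power and integrating over $G$ via Proposition~\ref{prop: KAK Cartan integral}, I dominate the Jacobian $\prod_\alpha \sinh^{m_\alpha}\alpha(X)$ by a multiple of $e^{2\rho X}$ (using $\sinh(x) \leq e^x/2$ for $x \geq 0$), so the integrand over $\alie^+$ reduces to $e^{(2-(2+\delta)(1-\varepsilon))\rho X}$. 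Since $\rho$ is linearly bounded below on $\alie^+$, choosing $\varepsilon < \delta/(2+\delta)$ makes the integral converge. Hence matrix coefficients of $\Cc(G/H)$-vectors lie in $\Lspace^{2+\delta}(G)$ for every $\delta > 0$; taking $\Hil_0 = \Cc(G/H)$, dense by Corollary~\ref{cor: I(varphi,1) spans Cc(GH)}, verifies item (iii) and establishes temperedness.

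For the converse, I plan to dominate each $|\langle \lambdagh(g) f_1, f_2\rangle|$ for $f_1, f_2 \in \Cc(G/H)$ by a matrix coefficient between two nonnegative $K$-fixed vectors $\tilde f_1, \tilde f_2 \in \Ltwogh$ with $\tilde f_i \geq |f_i|$ pointwise, so that item (ii) applies with one-dimensional $K$-span. Monotonicity of the integral together with positivity of the Radon-Nikodym cocycle in $\lambdagh$ will give $|\langle \lambdagh(g) f_1, f_2\rangle| \leq \langle \lambdagh(g) \tilde f_1, \tilde f_2\rangle \leq \|\tilde f_1\|\|\tilde f_2\|\,\Xi^G_0(g)$, and Lemma~\ref{lem: spherical functions} at $\chi = 0$ will then bound $\Xi^G_0(g) \leq p(\kappa(g)) e^{-\rho\kappa(g)} \leq C_\varepsilon e^{-(1-\varepsilon)\rho\kappa(g)}$ for any $\varepsilon > 0$, translating to $\thetagh \leq (1+\varepsilon)/2$ and hence $\thetagh \leq 1/2$. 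The delicate step will be constructing $\tilde f_i$: since the unitary $K$-action on $\Ltwogh$ carries the cocycle $(\delta(k^{-1}x)/\delta(x))^{1/2}$ from Lemma~\ref{lem: delta function and quasi-invariant measure on homogeneous space}, a $K$-fixed vector must take the twisted form $\tilde f(xH) = \delta(x)^{-1/2} g([x])$ for some measurable $g$ on $K\backslash G/H$; my plan is to set $g([x]) := \|f\|_\infty \sup_{y\in K\cdot x}\delta(y)^{1/2}\,\indicator_{\supp f}(yH)$, yielding a nonnegative majorant in $\Ltwogh$ supported on the compact set $K\cdot\supp f$.
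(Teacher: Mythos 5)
Your proof is correct and fills in exactly the route the paper leaves implicit (it states the corollary with no written proof, after remarking that $\Xi^G_0\in\Lspace^{2+\varepsilon}(G)$): condition (iii) of \autoref{thm: temperedness, equivalent formulations} plus the Cartan integration formula for one direction, and condition (ii) plus \autoref{lem: spherical functions} at $\chi=0$ for the other, with the $K$-fixed positive majorant supplying the needed $K$-finiteness. One small imprecision: your $g$ is not literally a function on $K\backslash G/H$ (it transforms under right $H$-translation by the character $(\Delta_H/\Delta_G)^{1/2}$), but since $\delta(x)^{-1/2}$ transforms by the inverse character the product does descend to a well-defined, bounded, compactly supported, $K$-fixed element of $\Ltwogh$ dominating $|f|$, so the argument goes through.
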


\section{Volume growth and volume decay}\label{sec vol growth decay}
The goal of this section is to establish the preliminary tools to study the growth and decay of volume in semisimple Lie groups.

Let $G$ be a real semisimple algebraic group and $\nu_G$ be its Haar measure.
We will be interested for example in the behavior of the volume of $BgB$, where $B$ is a fixed compactum while $g$ varies in $G$. The consideration of such volume functions is classical and appears in the related work \cite{ganguly-krotz2025a-note}.

\subsection{Local volume decay in $G$}
As a consequence of the Bruhat decomposition (\autoref{thm: Bruhat decomposition}), the multiplication map $\Nbar\times M\times A\times N\to G$ is a diffeomorphism onto an open subset of full measure.
\begin{lemma}\label{lem: volume decay in NMAN}
    For any compact subset $B$ of $\Nbar MAN$, there exists a constant $C>0$ such that for all $a\in A^+$ we have $\nug{a B a^{-1} \cap B} \leq C e^{-2\rho \log a}.$
\end{lemma}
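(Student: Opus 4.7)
The plan is to use Bruhat coordinates to parametrize the open dense subset $\Nbar MAN$ and apply the integration formula of \autoref{prop: integration via Bruhat decomposition}. The decisive geometric fact is that conjugation by any $a\in A$ preserves the Bruhat factorization componentwise, and on $a\in A^+$ contracts $\Nbar$ with Jacobian $\bigexp{-2\rho\log a}$.

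First, since the multiplication map $\Nbar\times M\times A\times N\to \Nbar MAN$ is a diffeomorphism (\autoref{thm: Bruhat decomposition}), continuity of the coordinate projections places the compact set $B$ inside a product $B_{\Nbar}\times B_M\times B_A\times B_N$ of compacta, and it suffices to treat this enlargement.

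Second, for $a\in A$ and $g = \nbar\, m\, b\, n$ in Bruhat coordinates, the commutation of $A$ with $M$ and with itself gives
\begin{equation*}
    a\,(\nbar\, m\, b\, n)\,a^{-1} = (a\nbar a^{-1})\cdot m\cdot b\cdot (a n a^{-1}),
\end{equation*}
so $g\in aBa^{-1}\cap B$ forces $\nbar\in B_{\Nbar}\cap a B_{\Nbar} a^{-1}$, while the other three coordinates stay in their respective compacta. Integrating the indicator of $aBa^{-1}\cap B$ via \autoref{prop: integration via Bruhat decomposition} then yields
\begin{equation*}
    \nug{aBa^{-1}\cap B} \leq \nu_{\Nbar}(B_{\Nbar}\cap a B_{\Nbar} a^{-1})\cdot \nu_M(B_M)\cdot \left(\int_{B_A} e^{2\rho\log b}\,\dif b\right)\cdot \nu_N(B_N),
\end{equation*}
and the last three factors are finite constants depending only on $B$.

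Finally, the volume decay. Since $\Nbar = \exp\!\bigl(\bigoplus_{\alpha\in\Sigma^+}\glie_{-\alpha}\bigr)$, the determinant of $\Ad(a)$ on $\nbar$ equals $\prod_{\alpha\in\Sigma^+} e^{-m_\alpha \alpha(\log a)} = \bigexp{-2\rho\log a}$, so
\begin{equation*}
    \nu_{\Nbar}(B_{\Nbar}\cap a B_{\Nbar} a^{-1})\leq \nu_{\Nbar}(a B_{\Nbar} a^{-1}) = \bigexp{-2\rho\log a}\cdot \nu_{\Nbar}(B_{\Nbar}),
\end{equation*}
and combining the two displays proves the claim. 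The only delicate point is the sign of the exponent $-2\rho$ in the Jacobian; this is automatic because $\Nbar$ is built from \emph{negative} root spaces while $\rho$ is the half sum of the positive ones. Otherwise the proof is just bookkeeping around the Bruhat integration formula, so I anticipate no serious obstacle.
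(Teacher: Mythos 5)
Your proposal is correct and follows essentially the same route as the paper: place $B$ inside a product $B_{\Nbar}MB_AB_N$ via the Bruhat diffeomorphism, observe that conjugation by $a$ acts componentwise so the intersection constrains the $\Nbar$-coordinate to $B_{\Nbar}\cap aB_{\Nbar}a^{-1}$, and extract the factor $\bigexp{-2\rho\log a}$ from the contraction of $\Nbar$ under $\Ad(a)$. The only cosmetic difference is that you bound $\nu_{\Nbar}(B_{\Nbar}\cap aB_{\Nbar}a^{-1})$ by $\nu_{\Nbar}(aB_{\Nbar}a^{-1})$ and compute the Jacobian of $\Ad(a)|_{\bar{\mathfrak n}}$ directly, whereas the paper performs the change of variables $\nbar'=a^{-1}\nbar a$ and then bounds by $\nu_{\Nbar}(B_{\Nbar})$ — these are the same estimate.
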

\begin{proof}
    By the Bruhat decomposition, there exist compact sets 
    $$B_{\Nbar}\subset \Nbar,\; B_A\subset A,\; B_N\subset N$$
    such that $B \subset B_{\Nbar}M B_A B_N$. From \autoref{prop: integration via Bruhat decomposition} we deduce
    \begin{align*}
        \nug{a B a^{-1} \cap B}\leq \int_{B_\Nbar}\int_M\int_{B_A}\int_{B_N}  \indicator_{B}(a^{-1}\nbar ma_1 n a)\,e^{2\rho\log a_1} \dif\nbar \dif m\dif a_1 \dif n.
    \end{align*}
    Since $a$ normalizes both $\Nbar$ and $N$, and since $MA$ centralizes $a$, we have furthermore
    \begin{align*}
        \nug{a B a^{-1} \cap B} &\leq \int_{B_{\Nbar}\cap a B_\Nbar a^{-1}}\dif \nbar \int_M \dif m \int_{B_A} e^{2\rho\log a_1} \dif a_1 \int_{B_{N}}\dif n\\
        &\leq \int_{a^{-1}B_\Nbar a\cap B_\Nbar} e^{-2\rho \log a}\dif \nbar' \,C(B_A, B_N)\\
        &\leq C(B_\Nbar, B_A, B_N) \,e^{-2\rho\log a},
    \end{align*}
    where we set $\nbar'=a^{-1}\nbar a$ and get $\dif\nbar' = e^{2\rho\log a} \dif \nbar$.
\end{proof}

\begin{lemma}\label{lem: open nbhd B_G, shrink NMAN}
    There exists an open neighborhood $B_G$ of $e$ in $G$, such that for all $k\in K$ we have $kB_G k^{-1}\subset \Nbar MAN$.
\end{lemma}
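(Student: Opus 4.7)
The plan is to obtain $B_G$ as a ``tube lemma'' neighborhood of $e$ with respect to the conjugation action of the compact group $K$ on $G$.

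First I would note that, by \autoref{thm: Bruhat decomposition}, the set $\Omega := \Nbar MAN$ is open in $G$ and contains the identity $e$ (it is the open Bruhat cell corresponding to $w=w^*$, and $e = \bar n m a n$ with all factors equal to $e$). Next, consider the continuous conjugation map
\begin{equation*}
    \Phi: K \times G \longrightarrow G, \qquad \Phi(k,g) = kgk^{-1}.
\end{equation*}
Since $\Phi(k,e) = e \in \Omega$ for every $k \in K$, the preimage $\Phi^{-1}(\Omega)$ is an open subset of $K \times G$ that contains the compact slice $K \times \{e\}$.

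The main (and in fact only substantive) step is then the tube lemma: since $K$ is compact, there exists an open neighborhood $B_G$ of $e$ in $G$ such that $K \times B_G \subset \Phi^{-1}(\Omega)$. Concretely, for each $k \in K$ pick open neighborhoods $U_k \ni k$ in $K$ and $V_k \ni e$ in $G$ with $U_k V_k U_k^{-1} \subset \Omega$; extract a finite subcover $U_{k_1},\dots,U_{k_n}$ of $K$ and set $B_G := \bigcap_{i=1}^n V_{k_i}$, which is still an open neighborhood of $e$. By construction, for every $k \in K$ one has $k \in U_{k_i}$ for some $i$, and $B_G \subset V_{k_i}$, so
\begin{equation*}
    k B_G k^{-1} \subset U_{k_i} V_{k_i} U_{k_i}^{-1} \subset \Omega = \Nbar MAN,
\end{equation*}
which is exactly the desired property. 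There is no real obstacle here; the only point to check is that the open Bruhat cell genuinely contains $e$, which is immediate from the Bruhat decomposition.
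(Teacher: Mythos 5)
Your proposal is correct and follows essentially the same route as the paper: both use continuity of the conjugation map $K\times G\to G$ together with compactness of $K$ (the tube lemma) to intersect finitely many neighborhoods of $e$. No issues.
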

\begin{proof}
    This follows from the openness of $\Nbar MAN$ in $G$ (\autoref{thm: Bruhat decomposition}) and the compactness of $K$.
\end{proof}

\begin{proposition}\label{prop: local volume decay in G}
    Let $B_G$ be given as in \autoref{lem: open nbhd B_G, shrink NMAN}. Then for any functions $\varphi_1,\varphi_2\in \Cc(B_G)$, there exists a constant $C=C(\varphi_1,\varphi_2)$ such that uniformly for all $g\in G$,
    \begin{equation*}
        \left|\int_G \varphi_1(g^{-1}x g)\varphi_2(x)\dif x\right| \leq C e^{-2\rho\kappa(g)}.
    \end{equation*}
\end{proposition}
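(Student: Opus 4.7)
The strategy is to combine a crude pointwise bound on the integral with \autoref{lem: volume decay in NMAN}, using the Cartan decomposition to transfer all the relevant conjugation onto the $A^+$-part. First, since both $\varphi_1$ and $\varphi_2$ are supported in $B_G$, setting $S_i := \supp \varphi_i$, the integrand vanishes outside $gS_1g^{-1}\cap S_2$, which gives the preliminary estimate
$$\Bigl|\int_G \varphi_1(g^{-1}xg)\varphi_2(x)\,\dif x\Bigr| \leq \|\varphi_1\|_\infty\,\|\varphi_2\|_\infty\,\nug{gS_1 g^{-1}\cap S_2}.$$
It therefore suffices to bound $\nug{gS_1g^{-1}\cap S_2}$ by a constant multiple of $e^{-2\rho\kappa(g)}$.

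Next, I would write the Cartan decomposition $g = k_1 a k_2$ with $a = \exp\kappa(g)\in A^+$ and $k_1,k_2\in K$, and introduce the conjugated compacta $S_1' := k_2 S_1 k_2^{-1}$ and $S_2' := k_1^{-1} S_2 k_1$. A direct computation gives $gS_1 g^{-1}\cap S_2 = k_1\bigl(aS_1'a^{-1}\cap S_2'\bigr)k_1^{-1}$, and since $G$ is unimodular, inner automorphisms preserve $\nu_G$, so
$$\nug{gS_1 g^{-1}\cap S_2} = \nug{aS_1'a^{-1}\cap S_2'}.$$
The issue with applying \autoref{lem: volume decay in NMAN} directly at this stage is that $S_1', S_2'$ depend on $k_1, k_2$, whereas the lemma requires a fixed compact subset of $\Nbar MAN$. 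This is exactly the obstacle that \autoref{lem: open nbhd B_G, shrink NMAN} is designed to remove.

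Finally, I would set
$$\tilde B := \bigcup_{k\in K} k(S_1\cup S_2)k^{-1},$$
which is the image of the compact set $K\times (S_1\cup S_2)$ under the continuous conjugation map $(k,x)\mapsto kxk^{-1}$, hence compact, and which lies in $\Nbar MAN$ by \autoref{lem: open nbhd B_G, shrink NMAN} together with $S_1\cup S_2\subset B_G$. Since $S_1',\,S_2'\subset \tilde B$, the monotonicity $aS_1'a^{-1}\cap S_2' \subset a\tilde B a^{-1}\cap \tilde B$ and \autoref{lem: volume decay in NMAN} applied to the compact set $\tilde B\subset \Nbar MAN$ yield a constant $C$ such that $\nug{a\tilde B a^{-1}\cap \tilde B}\leq Ce^{-2\rho\log a}$ for every $a\in A^+$. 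Since $\log a = \kappa(g)$, stringing the inequalities together closes the argument with $C(\varphi_1,\varphi_2) = C\,\|\varphi_1\|_\infty\|\varphi_2\|_\infty$. The only conceptually delicate step is this $K$-conjugation enlargement, precisely because arbitrary $k_1,k_2\in K$ appear in the Cartan decomposition; the rest is a matter of unimodularity and direct substitution.
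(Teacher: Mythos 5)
Your proof is correct and follows essentially the same route as the paper's: reduce to the $A^+$-direction via the Cartan decomposition and unimodularity, and absorb the two unknown $K$-factors by passing to a $K$-conjugation-invariant enlargement inside $\Nbar MAN$, which is exactly what \autoref{lem: open nbhd B_G, shrink NMAN} permits, before invoking \autoref{lem: volume decay in NMAN}. The only (cosmetic) difference is that you enlarge the supports to the compact set $\tilde B=\bigcup_{k\in K}k(S_1\cup S_2)k^{-1}$, whereas the paper enlarges the functions to $\tilde\varphi_i(x)=\sup_{k\in K}|\varphi_i(kxk^{-1})|$; the two devices are interchangeable here.
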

\begin{proof}
    For $i=1,2$, define $\tilde{\varphi}_i(x) := \sup_{k\in K} \left|\varphi_i(kxk^{-1})\right|$.
    Then $\tilde{\varphi}_1, \tilde{\varphi}_2$ are continuous functions compactly supported in $\Nbar MAN$ by \autoref{lem: open nbhd B_G, shrink NMAN}.
    For $g\in G$, we can write $g=k_2 e^X k_1$ for $X=\kappa(g)$ and some $k_1,k_2\in K$. By the unimodularity of $G$, we have
    \begin{align*}
        \norm{\int_G \varphi_1(g^{-1}x g)\varphi_2(x)\dif x}
        &= \norm{\int_G \varphi_1(k_1^{-1} e^{-X}y e^X k_1 )\varphi_2(k_2 y k_2^{-1})\dif y}\\
        &\leq \int_G \tilde{\varphi}_1(e^{-X} y e^X) \tilde{\varphi}_2(y) \dif y,
    \end{align*}
    which is bounded from above by $C e^{-2\rho(X)}$ by \autoref{lem: volume decay in NMAN}.
\end{proof}

\subsection{The rho-function and volume decay}\label{subsec: the rho-function and vol decay}
Let $H$ be a Lie group and $R: H \to \GL(V)$ be a continuous linear representation on a $d$-dimensional real vector space $V$.
By notation abusing, the differential map $R: \hlie \to \End(V)$ is a representation of the Lie algebra $\hlie$.
To these data we associate the following rho-function $\rho_V: \hlie\to \bbR^+$.
\begin{definition}
    For each $Y\in\hlie$, the action of $R(Y)$ on $V\otimes \bbC$ admits a Jordan normal form over $\bbC$ with diagonal elements $\lambda_1,\dots,\lambda_d$. We define
    \begin{equation*}
        \rho_V(Y) := \frac{1}{2} \sum_{i=1}^d \norm{\Re\lambda_i}.
    \end{equation*}
\end{definition}

It follows from the definition that $\rho_V$ is a continuous homogeneous function which is invariant by the adjoint action of $H$.

\begin{remark}\label{rem: rho function for alg gp}
    Let $H$ be a real algebraic group and $R$ be an algebraic representation over $\bbR$.
    Let $\alieh$ be a maximal split abelian subalgebra of $\hlie$. Since $R(\alieh)$ is a split abelian subalgebra of $\End(V)$, the action of $\alieh$ is jointly diagonalizable over $\bbR$. Then the restriction $\rho_V|_{\alieh}$ is the half sum of the absolute values of the eigenvalues and therefore is a piecewise linear, continuous, convex, homogeneous function. As $V$ is finite-dimensional, the function $\rho_V$ is uniformly Lipschitz on $\alieh$.
    If $R$ is faithful, then $\rho_V|_{\alieh}$ is a polyhedral norm on $\alieh$.

    By the Jordan decomposition, every element $Y\in\hlie$ splits uniquely as a sum of commuting elements $Y=Y_e + Y_h + Y_n$ in $\hlie$ with $Y_e$ elliptic, $Y_h$ hyperbolic and $Y_n$ nilpotent. Moreover, $Y_h$ is $H$-conjugated into $\alieh$. Since $\rho_V(Y)=\rho_V(Y_h)$, the function $\rho_V$ is determined by $\rho_V|_{\alieh}$.
\end{remark}

\begin{example}\label{eg: rhoh in case of adjoint rep}
    Let $H$ be a real reductive group and $(R, V)=(\Ad, \hlie)$ be the adjoint representation. Fix a positive system $\Sigma^+(\alieh; \hlie)$ and let $\rhoH$ be the usual \emph{half} sum of positive roots.
    Then the convex function $\rhoh$ coincides with the twice of the linear form $\rhoH$ on the positive Weyl chamber $\alieh^+$. If $W_H=W(\alieh;\hlie)$ denotes the Weyl group, then for all $X\in\alieh$,
    $$\rhoh(X) = \max_{w\in W_H} 2\rhoH(wX).$$
    In particular, $\rhoh$ is $W_H$-invariant.
\end{example}

Let $\Vol$ be the Lebesgue measure on the vector space $V\cong \bbR^d$. The function $\rho_V$ reflects the volume decay of the representation $R$.
\begin{lemma}[{\cite[Lem 2.8]{benoist-kobayashi2022tempered2}}]\label{lem: rho fn and volume decay}
    Let $\alie\subset \End(V)$ be a real split abelian subalgebra.
    Then for any compact neighborhood $B$ of $0$ in $V$, there exist constants $c, C>0$ such that uniformly for all $X\in\alie$,
    \begin{equation*}
        c e^{-\rho_V(X)} \leq e^{-\Tr R(X)/2}\Vol\!\left(R(\exp X) B\cap B\right) \leq C e^{-\rho_V(X)}.\qedineq
    \end{equation*}
\end{lemma}

\begin{corollary}\label{cor: rho fun semisimple and volume decay}
    For any compact neighborhood $B$ of $0$ in $V$ and any semisimple operator $X\in \End(V)$, there exist constants $c_X, C_X >0$ such that uniformly for all $t\in \bbR^+$,
    \begin{equation*}
        c_X e^{-t\rho_V(X)} \leq e^{-\Tr(tX)/2}\Vol\!\left(\exp(tX) B\cap B\right) \leq C_X e^{-t\rho_V(X)}.
    \end{equation*}
\end{corollary}
\begin{proof}
    One applies \autoref{lem: rho fn and volume decay} and absorbs the elliptic part which corresponds to a rotation on $V$.
\end{proof}

\autoref{lem: rho fn and volume decay} can be rephrased in terms of unitary representations.
\begin{corollary}\label{cor: rho fun and volume decay}
    Let $H$ be a real reductive group and $R: H\to \GL(V)$ be an algebraic linear representation. Consider the unitary representation $(\tau, \Lspace^2(V))$ of the group $H$ derived from $R$, given by
    \begin{equation}\label{eq def: unit rep on L2 of an alg rep}
        \tau(h)f(v) = f(R(h)^{-1}v) \left(\det R(h)\right)^{-\frac{1}{2}},
    \end{equation}
    for $f\in \Lspace^2(V)$ and $v\in V$. Then for any compact neighborhood $B$ of $0$ in $V$, there exist constants $c, C>0$ such that uniformly for all $h\in H$,
    \begin{equation*}
        c e^{-\rho_V\kappa_H(h)} \leq \Innerprod{\tau(h)\indicator_{B}, \indicator_{B}} \leq C e^{-\rho_V\kappa_H(h)},
    \end{equation*}
     where $\kappa_H: H\to \alieh$ is a Cartan projection.
\end{corollary}
\begin{proof}
    Let $H=K_H A_H^+ K_H$ be the Cartan decomposition of $H$ associated with $\kappa_H$. The subset $D:=R(K_H)B$ is a $K_H$-left-invariant compact neighborhood of $0$ in $V$. Thus for any $k_1,k_2\in K_H$ and $X\in \alieh$, we have
    \begin{equation*}
        \Innerprod{\tau(k_1 e^X k_2)\indicator_D, \indicator_D} = e^{-\Tr R(X)/2} \Vol\!\left(R(e^X)D\cap D\right),
    \end{equation*}
    whence we can conclude by applying \autoref{lem: rho fn and volume decay}.
\end{proof}

Now we can define the local volume decay exponent of $G/H$.

\begin{definition}\label{def: betagh}
    Let $G$ be a semisimple Lie group and $H$ be a closed subgroup. The \emph{local volume decay exponent} $\betagh$ is defined by
    \begin{equation*}
        \betagh := \sup_{X\in\hlie} \frac{\rhoh(X)}{\rhog(X)},
    \end{equation*}
    where the spaces $\glie, \hlie$ are viewed as $\hlie$-module through the adjoint action. We take $0/0=0$ by convention.
\end{definition}
\begin{remark*}
    (1) By definition, the number $\betagh$ lies in $[0,1]$.

    (2) If $H_1<H_2$ are closed subgroups of $G$, then $\beta_{G/H_1}\leq \beta_{G/H_2}$.
    
    (3) If both $G$ and $H$ are algebraic groups, then \autoref{rem: rho function for alg gp} implies
    \begin{equation*}
        \betagh = \sup_{X\in\alieh} \frac{\rhoh(X)}{\rhog(X)}.
    \end{equation*}
\end{remark*}

\subsection{Volume growth in $G$}
By $B\Subset G$ we denote that $B$ be a compact subset of $G$ of nonempty interior. 
\begin{proposition}\label{prop: BgB volume growth in G}
    For any $B\Subset G$, there exist constants $c,C>0$ such that uniformly for all $g\in G$,
    \begin{equation*}
        c e^{2\rho\kappa(g)} \leq \nug{BgB} \leq Ce^{2\rho\kappa(g)}.\qedineq
    \end{equation*}
\end{proposition}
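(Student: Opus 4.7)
The plan is to prove the upper and lower bounds separately: the upper bound via the Cartan integration formula (\autoref{prop: KAK Cartan integral}) combined with \autoref{lem: compact error in cartan projection}, and the lower bound via the Bruhat integration formula (\autoref{prop: integration via Bruhat decomposition}) after a reduction to a $K$-conjugation-invariant neighborhood of $e$.

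For the upper bound, \autoref{lem: compact error in cartan projection} supplies $r>0$ with $\kappa(BgB)\subset\kappa(g)+\alie(r)$ for all $g\in G$. Since every element of $BgB$ has Cartan projection in $(\kappa(g)+\alie(r))\cap\alie^+$, we obtain $BgB\subset K\exp\bigl((\kappa(g)+\alie(r))\cap\alie^+\bigr)K$, so \autoref{prop: KAK Cartan integral} bounds $\nug{BgB}$ above by
\begin{equation*}
    C_1\int_{(\kappa(g)+\alie(r))\cap\alie^+}\prod_{\alpha\in\Sigma^+}\sinh^{\dim\glie_\alpha}\!\alpha(X)\,\dif X.
\end{equation*}
The elementary estimate $\sinh t\leq e^t$ gives $\prod_\alpha\sinh^{\dim\glie_\alpha}\!\alpha(X)\leq e^{2\rho(X)}$, and since $|\rho(X)-\rho\kappa(g)|\leq\|\rho\|\,r$ on this domain, whose Lebesgue measure is bounded independently of $g$, the upper bound $\nug{BgB}\leq Ce^{2\rho\kappa(g)}$ follows.

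For the lower bound, I first reduce to the case where $B$ contains a $K$-conjugation-invariant neighborhood $W$ of $e$. By unimodularity of $G$, left- and right-translating the two factors of $B$ by interior points of $B$ does not change $\nug{BgB}$, so we may assume $B$ contains an open neighborhood of $e$; then the tube lemma applied to the continuous conjugation map $K\times G\to G$, $(k,x)\mapsto kxk^{-1}$, together with compactness of $K$, produces a $K$-conjugation-invariant neighborhood $W\subset B$ of $e$. Writing $g=k_1e^{X}k_2$ with $X=\kappa(g)$, the $K$-invariance of $W$ and the bi-invariance of $\nu_G$ yield
\begin{equation*}
    \nug{BgB}\geq\nug{WgW}=\nug{k_1We^{X}Wk_2}=\nug{We^{X}W},
\end{equation*}
reducing the claim to a $g$-independent estimate on $\nug{We^{X}W}$.

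To estimate $\nug{We^{X}W}$, pick neighborhoods $\Nbar_0,M_0,A_0,N_0$ of $e$ in $\Nbar,M,A,N$ respectively, small enough that the Bruhat product $\Nbar_0M_0A_0N_0$ lies in $W$. Using that $A$ centralizes $e^{X}$,
\begin{equation*}
    We^{X}W\supset(\Nbar_0M_0A_0)\cdot e^{X}\cdot N_0=\Nbar_0M_0(e^{X}A_0)N_0,
\end{equation*}
and \autoref{prop: integration via Bruhat decomposition} expresses the measure of the right-hand side in the Bruhat chart via the Jacobian factor $e^{2\rho\log a}$; for $a=e^{X}a_0$ with $a_0\in A_0$ we have $\log a=X+\log a_0$ with $\log a_0$ bounded, so the Jacobian is uniformly comparable to $e^{2\rho(X)}$, yielding $\nug{We^{X}W}\geq ce^{2\rho(X)}$ for a positive constant $c$ depending only on the chosen neighborhoods. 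The main obstacle is the bookkeeping in the reduction to a $K$-invariant $W$ and in arranging the Bruhat containment $\Nbar_0M_0A_0N_0\subset W$; the conceptual heart is that the Jacobian $e^{2\rho\log a}$ of the Bruhat integration formula produces the required exponential growth automatically.
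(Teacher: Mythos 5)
Your proof is correct and follows essentially the same route as the paper: the upper bound is identical (Cartan projection control plus the $KAK$ integration formula), and the lower bound uses the same reduction to a $K$-conjugation-invariant neighborhood of $e$ followed by the Bruhat decomposition. The only cosmetic difference is that you read the factor $e^{2\rho(X)}$ off the $A$-Jacobian $e^{2\rho\log a}$ by absorbing $e^{X}$ into the $A$-slot, whereas the paper conjugates by $e^{-X}$ and extracts it from the expansion of the $N$-factor; both are the same computation in disguise.
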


\begin{proof}
    Let $X = \kappa(g)$.
    For the upper bound,
    by \autoref{lem: compact error in cartan projection}, there exists $r>0$ such that $\kappa(BgB)\subset \kappa(g) + \alie(r)$ for all $g\in G$, whence we have $BgB \subset K e^{X+\alie(r)} K$. By \autoref{prop: KAK Cartan integral}, we have uniformly for $g\in G$,
    \begin{align*}
        \nug{BgB} \leq \int_K \int_{X+\alie(r)} \int_K e^{2\rho(Y)} \dif k \dif Y \dif k'
        \leq C \bigexp{2\rho(X)}.
    \end{align*}

    For the lower bound, first note that by \autoref{lem: compact error in cartan projection}, up to translation we can suppose that $B$ contains a neighborhood of $e \in G$. Then we can find a small neighborhood $B'$ of $e$ with $kB'k^{-1}\subset B$ for all $k\in K$ (cf.\,\autoref{lem: open nbhd B_G, shrink NMAN}).
    Write $g = k_1 e^X k_2$. By the unimodularity of $G$,
    $$\nug{BgB} = \nug{(k_1^{-1}Bk_1) e^X (k_2 B k_2^{-1})}\geq \nug{B'e^X B'}.$$
    Up to further shrinking $B'$, we can assume $B'\subset \Nbar  MAN$. 
    By the Bruhat decomposition (\autoref{thm: Bruhat decomposition}), there exist compact neighborhoods of $e$ in the respective subgroups $B_{\Nbar}\Subset \Nbar$, $B_M\Subset M$, $B_A\Subset A$, and $B_N\Subset N$ such that $ B_{\Nbar} B_M B_A B_N \subset B'$.
    Hence by the unimodularity of $G$, we deduce
    \begin{align*}
        \nug{B'e^X B'} &= \nug{B'e^X B' e^{-X}}\\
        & \geq \nug{B_{\Nbar}\left(e^X B_\Nbar B_M B_A B_N e^{-X}\right)}\\
        &= \nug{\left(B_{\Nbar}e^X B_{\Nbar} e^{-X}\right) B_M B_A \left(e^X B_N e^{-X}\right)}.
    \end{align*}
    By further applying \autoref{prop: integration via Bruhat decomposition}, we obtain uniformly for $g\in G$,
    \begin{align*}
        \nug{B'e^X B'} &\geq c\, \nunbar{B_{\Nbar}e^X B_{\Nbar} e^{-X}} \,\nun{e^X B_N e^{-X}}\\
        &\geq c\,\nunbar{B_\Nbar}\, e^{2\rho(X)} \nun{B_N} = c' e^{2\rho(X)}.\qedhere
    \end{align*}
\end{proof}

\subsection{Relative volume growth of closed subgroups}
In this subsection, we introduce the relative volume growth exponent of closed subgroups of the noncompact semisimple Lie group $G$. As it turns out, for a discrete subgroup, it can be recovered from the growth indicator function, while for a reductive subgroup, we relate this quantity to the Lie algebras, or more precisely to the local volume decay exponent $\betagh$.

Recall that the symmetric measure on a locally compact group $H$ is given by $\dif \nu_H(h) = \Delta_H(h)^{-\frac{1}{2}} \dif h$. If the group $H$ is either reductive or discrete, then $\nu_H$ coincides with the Haar measure.
\begin{definition}\label{def: relative exponent of closed subgroups}
    Define the \emph{relative volume growth exponent} of a closed subgroup $H$ in $G$ by
    \begin{equation*}
        \delta_{G/H} := \maxde{0, \,\sup_{B\Subset G} \limsup_{g\to\infty} \frac{\log \nu_H\!\left(H\cap BgB\right)}{\log\nu_G(BgB)}},
    \end{equation*}
    where $B$ ranges over compacta of $G$ of nonempty interior.
\end{definition}

\begin{remark}\label{rem: cofinal choice of B in relative exponent}
    (1) A finite-covering argument shows that the limsup in the definition does not depend on the choice of $B$.

    (2) One can also restrict $g\to\infty$ to be inside $H$.
\end{remark}

Consider now the following analogue of Poincaré series
\begin{equation}
    I(t):= \int_H e^{-2t\rho\kappa(h)} \dif\nu_H(h).
\end{equation}
The following proposition characterizes the growth exponent $\deltagh$ as the abscissa of convergence of $I$.

\begin{proposition}\label{prop: deltagh as the abscissa of convergence}
    For any closed subgroup $H$ of a connected noncompact semisimple Lie group $G$ of finite center, we have
    \begin{equation*}
        \deltagh = \inf\setdef{t\in [0,\infty]: I(t)<\infty}.
    \end{equation*}
\end{proposition}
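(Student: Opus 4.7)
The plan is to prove the two inequalities separately. Denote by $\delta^\ast$ the abscissa of convergence on the right-hand side. The two essential inputs are Proposition \ref{prop: BgB volume growth in G}, giving $\nug{BgB} \asymp e^{2\rho\kappa(g)}$ for each $B \Subset G$, and Lemma \ref{lem: compact error in cartan projection}, giving $\kappa(BgB) \subset \kappa(g) + \alie(r)$ for some $r = r(B)$.

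For the inequality $\deltagh \leq \delta^\ast$, fix $t > \delta^\ast$ and $B \Subset G$. Since $\rho\kappa(h) \leq \rho\kappa(g) + r\norm{\rho}$ for every $h \in H \cap BgB$,
\[
\nu_H(H \cap BgB) \leq e^{2t\rho\kappa(g) + 2tr\norm{\rho}} \int_{H \cap BgB} e^{-2t\rho\kappa(h)} \dif\nu_H(h) \leq C\, e^{2t\rho\kappa(g)},
\]
the last bound being the finite total integral. Combined with $\log\nug{BgB} = 2\rho\kappa(g) + O(1)$ from Proposition \ref{prop: BgB volume growth in G}, and using that $\rho\kappa(g) \to \infty$ as $g \to \infty$, this yields $\limsup \log\nu_H(H\cap BgB)/\log\nug{BgB} \leq t$ for every $B$, hence $\deltagh \leq t$. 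Letting $t \searrow \delta^\ast$ gives $\deltagh \leq \delta^\ast$.

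For the reverse inequality, fix $t > \deltagh$ and pick $s \in (\deltagh, t)$. I would partition $\alie^+$ into unit cubes $\{E_n\}$, select $X_n \in E_n$, and set $H_n := \{h \in H : \kappa(h) \in E_n\}$. A single $B_0 \Subset G$ depending only on the cube size satisfies $H_n \subset B_0 \exp(X_n) B_0$ for every $n$. Applying the definition of $\deltagh$ with $B = B_0$, together with Proposition \ref{prop: BgB volume growth in G}, yields $\nu_H(H_n) \leq C_1\, e^{2s\rho(X_n)}$ as soon as $\norm{X_n}$ exceeds some threshold; and since $\rho\kappa(h)$ remains within a fixed constant of $\rho(X_n)$ on $H_n$,
\[
\int_{H_n} e^{-2t\rho\kappa(h)} \dif\nu_H(h) \leq C_2\, e^{-2(t-s)\rho(X_n)}.
\]
The cubes with bounded $\norm{X_n}$ contribute finitely since each such $H_n$ is relatively compact in $H$. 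Because $G$ is semisimple, $\rho$ is strictly positive on $\alie^+ \setminus \{0\}$, so $\rho(X) \geq c_0 \norm{X}$ on $\alie^+$ by compactness of the unit sphere, and the tail $\sum_n e^{-2(t-s)c_0\norm{X_n}}$ converges against the polynomial count of cubes at a given radius. Therefore $\delta^\ast \leq t$, and letting $t \searrow \deltagh$ concludes.

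The substantive work lies in the second direction: one needs the compact $B_0$ to be chosen uniformly in $n$ and the oscillation of $\rho\kappa$ on each cube to be uniformly bounded, both supplied by Lemma \ref{lem: compact error in cartan projection}. Summability of the tail ultimately reduces to the strict positivity of $\rho$ on $\alie^+ \setminus \{0\}$, which is where semisimplicity of $G$ enters.
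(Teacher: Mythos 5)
Your proof is correct and follows essentially the same route as the paper's: the upper bound $\deltagh\leq\delta^\ast$ via \autoref{lem: compact error in cartan projection} and \autoref{prop: BgB volume growth in G}, and the lower bound by covering $\alie^+$ with uniformly sized pieces (unit cubes in your version, lattice balls in the paper's), bounding each $\nu_H(H_n)$ through the definition of $\deltagh$ with a single $B_0=Ke^{\alie(r)}K$, and summing the resulting geometric tail. Your explicit appeal to the strict positivity of $\rho$ on $\alie^+\setminus\{0\}$ to get $\rho(X)\geq c_0\norm{X}$ is a welcome clarification of a step the paper leaves implicit.
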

\begin{proof}
    Denote the right hand side by $\delta_0$.
    First show $\deltagh\leq \delta_0$. Let $B\Subset G$. Then by \autoref{lem: compact error in cartan projection}, there exists $R>0$ such that $\kappa(BgB)\subset \kappa(g) + \alie(R)$ for all $g\in G$, whence for all $h\in H\cap BgB$ we have
    \begin{equation*}
        2\rho\kappa(h) \leq 2\rho\kappa(g) + C_1,
    \end{equation*}
    where the constant $C_1= \sup_{\alie(R)} 2\rho$ is uniform. Since $I(t)<\infty$ for any $t>\delta_0$, we then have
    \begin{align*}
        \nuh{H\cap BgB} &\leq C_2 \,e^{2t\rho\kappa(g)} \int_{H\cap BgB} e^{-2t\rho\kappa(h)} \dif\nu_H(h)\\
        &\leq C_2 \,e^{2t\rho\kappa(g)} I(t) \leq C_3 \, e^{2t\rho\kappa(g)},
    \end{align*}
    uniformly for all $g\in G$. From the definition of the growth exponent $\deltagh$ we deduce that $t\geq \deltagh$, whence we have $\delta_0\geq \deltagh$.

    Next we show $\deltagh\geq \delta_0$. 
    Let $\calL$ be a lattice of $\alie$ and $\calL^+=\calL\cap \alie^+$. Then there exists some $r>0$ such that $\alie^+$ is covered by the balls of radius $r$ centered at elements in $\calL^+$. Fix any number $t> \deltagh$ and then fix a small number $\varepsilon>0$ so that $t-\varepsilon > \deltagh$.

    Define the subset $B= K e^{\alie(r)} K \Subset G$. \autoref{lem: compact error in cartan projection} yields a constant $c>0$ such that whenever $h\in Be^X B$, we have $2\rho(X) \leq 2\rho\kappa(h) + c$, whence
    \begin{align*}
        \int_{H\cap Be^X B} e^{-2t\rho\kappa(h)} \dif\nu_H(h) \leq C_4\, e^{-2t\rho(X)} \nuh{H\cap Be^X B}
    \end{align*}
    uniformly for all $X\in \alie^+$. But since $t-\varepsilon>\deltagh$, we deduce from \autoref{prop: BgB volume growth in G} that
    \begin{equation*}
        \nuh{H\cap Be^X B} \leq C_5\, e^{2(t-\varepsilon)\rho(X)}
    \end{equation*}
    uniformly for $X\in\alie^+$, whence for all $X\in\alie^+$ we have
    \begin{equation*}
        \int_{H\cap Be^X B} e^{-2t\rho\kappa(h)} \dif\nu_H(h) \leq C_6\, e^{-2\varepsilon\rho(X)}.
    \end{equation*}
    Since the construction implies that the subset $B e^X B$ contains $K e^{X+\alie(r)} K$ for all $X\in\alie^+$, we obtain
    \begin{align*}
        I(t) \leq \sum_{X\in\calL^+} \int_{H\cap Be^X B} e^{-2t\rho\kappa(h)} \dif\nu_H(h) \leq C_6 \sum_{X\in\calL^+} e^{-2\varepsilon\rho(X)},
    \end{align*}
    which is finite as the lattice $\calL$ grows polynomially. Hence, we have $t>\delta_0$ and we obtain $\deltagh\geq \delta_0$.
\end{proof}

\subsubsection{Reductive subgroups}
\begin{proposition}\label{prop: delta=rho for reductive subgroups}
    If $H$ is a reductive subgroup of $G$, then
    \begin{equation*}
        \deltagh = \rhogh.
    \end{equation*}
\end{proposition}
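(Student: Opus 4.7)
The plan is to apply \autoref{prop: deltagh as the abscissa of convergence} and compute the abscissa of convergence of $I(t):=\int_H e^{-2t\rho\kappa(h)}\,\drm\nu_H(h)$ directly via the Cartan decomposition of $H$. Since $H$ is reductive, $\nu_H$ is a Haar measure on $H$. Choosing a Cartan involution of $H$ compatible with that of $G$, one may assume $\alieh\subset\alie$ and that $K_H:=K\cap H$ is a maximal compact subgroup of $H$. Applying the Cartan integration formula (\autoref{prop: KAK Cartan integral}) to $H$ and using the $K_H$-bi-invariance of $h\mapsto e^{-2t\rho\kappa(h)}$ (which follows from $K_H\subset K$) reduces $I(t)$, up to a positive multiplicative constant, to
\begin{equation*}
    J(t)\;:=\;\int_{\alieh^+} e^{-2t\rho\kappa(e^Y)}\prod_{\alpha\in\Sigma^+(\alieh;\hlie)}\sinh^{m_\alpha^H}\alpha(Y)\,\drm Y,
\end{equation*}
where $\alieh^+$ denotes a positive Weyl chamber for $(\alieh,\hlie)$.

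The next step is to identify both exponential rates in the integrand. For the Cartan projection: writing $Y^+$ for the $W_G$-conjugate of $Y$ inside $\alie^+$, we have $\kappa(e^Y)=Y^+$. Since for $Y\in\alie$ the eigenvalues of $\mathrm{ad}(Y)$ on $\glie$ are precisely the numbers $\alpha(Y)$ with $\alpha\in\Sigma(\alie;\glie)\cup\{0\}$ and multiplicities $m_\alpha$, we have $\rhog(Y)=\sum_{\alpha\in\Sigma^+(\alie;\glie)}m_\alpha|\alpha(Y)|$; this expression is $W_G$-invariant (by pairing $\pm\alpha$) and equals $2\rho$ on $\alie^+$, so
\begin{equation*}
    2\rho\kappa(e^Y)\;=\;2\rho(Y^+)\;=\;\rhog(Y)\qquad\text{for all }Y\in\alieh.
\end{equation*}
For the Jacobian: the estimate $\sinh(x)\asymp\tfrac{1}{2}e^{x}$ as $x\to\infty$, combined with \autoref{eg: rhoh in case of adjoint rep} (which gives $\rhoh|_{\alieh^+}=2\rhoH$), implies that $\prod_\alpha\sinh^{m_\alpha^H}\alpha(Y)$ is bounded above by $Ce^{\rhoh(Y)}$ uniformly on $\alieh^+$ and below by $ce^{\rhoh(Y)}$ on any subcone bounded away from the walls.

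Plugging these in, the convergence of $J(t)$ is equivalent to that of $\int_{\alieh^+}e^{\rhoh(Y)-t\rhog(Y)}\,\drm Y$. Since $\rhoh,\rhog$ are continuous and positively $1$-homogeneous, and $\rhog(Y)>0$ for $Y\in\alieh\setminus\{0\}$ (as $G$ is semisimple, $\mathrm{ad}(Y)\ne 0$ whenever $Y\ne 0$), a polar change of variables on the convex cone $\alieh^+$ shows convergence iff $\rhoh(\omega)<t\rhog(\omega)$ uniformly on the compact slice $\{\omega\in\alieh^+:\norm{\omega}=1\}$. The infimum of admissible $t$ therefore equals $\sup_{Y\in\alieh^+\setminus\{0\}}\rhoh(Y)/\rhog(Y)$, which by the $W_H$-invariance of the ratio together with the Jordan-decomposition reduction stated in the remark after \autoref{def: betagh} equals $\sup_{\alieh}\rhoh/\rhog=\rhogh$.

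The main technical step is the identity $2\rho\kappa(e^Y)=\rhog(Y)$ for $Y\in\alieh\subset\alie$, which bridges the $G$-Cartan projection and the intrinsic $\hlie$-theoretic rho-function; the rest is classical Laplace-type asymptotic analysis, the only additional care being the preliminary arrangement of $\alieh\subset\alie$ via the compatibility of Cartan involutions for reductive subgroups.
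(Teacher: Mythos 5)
Your argument is correct, and it takes a genuinely different route from the paper. The paper works directly with the volume-ratio definition of $\deltagh$: the inequality $\deltagh\leq\rhogh$ is obtained from a pointwise upper bound $\nuh{H\cap BgB}\leq C\bigexp{2\rhogh\rho\kappa(g)}$ (\autoref{lem: upper bound on volume growth in H}, via the containment $H\cap BgB\subset\bigcup_{w\in W_G}K_H e^{(w\kappa(g)+\alie(r))\cap\alieh}K_H$ and the Cartan integral on $H$), while the reverse inequality comes from $H\cap BhB\supset B_H h B_H$, the lower bound of \autoref{prop: BgB volume growth in G} applied to the reductive group $H$, and a choice of a direction $X\in\alieh$ realizing $\sup\rhoh/\rhog$, along which one evaluates the limsup on the sequence $e^{nX}$. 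You instead route everything through the Dirichlet-series characterization \autoref{prop: deltagh as the abscissa of convergence} and compute the abscissa in one stroke from the Cartan integral on $H$, the identity $2\rho\kappa(e^Y)=\rhog(Y)$ for $Y\in\alie$ (which is exactly \autoref{eg: rhoh in case of adjoint rep} for $G$), and the comparison of the $\sinh$-Jacobian with $\bigexp{\rhoh}$. The two proofs share all their essential inputs (compatible Cartan decompositions, \autoref{prop: KAK Cartan integral} applied to $H$, and the identification of the KAK Jacobian with $\rhoh$), but yours avoids the two-sided volume estimates and the explicit maximizing sequence; what it costs is a small amount of extra care in the divergence direction: since the $\sinh$-product is only bounded below by $c\,\bigexp{\rhoh(Y)}$ on subcones of $\alieh^+$ bounded away from the walls, you should note explicitly that for $t<\sup_{\alieh^+}\rhoh/\rhog$ the open set $\setdef{\rhoh-t\rhog>0}$ is $W_H$-invariant and therefore meets the interior of $\alieh^+$ in a nonempty open cone, even when the supremum is attained only on a wall; with that observation the polar-coordinate argument closes, and the final identification with $\rhogh$ via the Jordan-decomposition remark is exactly as in the paper.
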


First we introduce the notation.
Let $A_H$ be a Cartan subspace of $H$. Extending $A_H$ to a Cartan subspace $A$ of $G$, we have $A_H = A\cap H$. By the reductiveness of $H$, there exists a Cartan decomposition $G=KAK$ of $G$ such that the subgroup $K_H:= K\cap H$ is a maximal compact subgroup of $H$, with the Cartan decomposition of $H$ given by $H=K_H A_H K_H$.

Let $\alie, \alieh$ denote respectively the Cartan subspaces and $W_G, W_H$ denote respectively the associated Weyl groups. Since the Killing form is adjoint-invariant, the induced Euclidean norm $\norm{\cdot}$ on $\alie$ is $W_G$-invariant and its restriction to $\alieh$ is $W_H$-invariant.

By the Cartan decomposition of $H$, we have for all $h\in H$ that
\begin{equation}\label{eq ppty: kappaH and kappa mod wg}
    h \in K_H \bigexp{\setdef{w\kappa(h)\colon w\in W_G}\cap \alieh} K_H.
\end{equation}

\begin{lemma}\label{lem: upper bound on volume growth in H}
    For any $B\Subset G$, there exists a constant $C>0$ such that uniformly for all $g\in G$,
    \begin{equation*}
        \nuh{H\cap B g B } \leq Ce^{2\rhogh\rho\kappa(g)}.
    \end{equation*}
\end{lemma}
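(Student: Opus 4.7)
The plan is to apply the Cartan-decomposition integration formula on $H = K_H A_H K_H$ and bound the resulting Jacobian $\prod_{\alpha\in\Sigma_H^+}\sinh^{m_\alpha}\alpha(Y)$ in terms of $\beta_{G/H}\rho\kappa(g)$. Since a reductive $H$ is unimodular, $\nu_H$ is a Haar measure on $H$, and the standard Cartan integration formula yields
\[
\nu_H(H\cap BgB) = \int_{K_H}\int_{\mathfrak{a}_H^+}\int_{K_H} \mathbf{1}_{BgB}(k_1 e^Y k_2) \prod_{\alpha\in\Sigma_H^+}\sinh^{m_\alpha}\alpha(Y)\, dk_1\, dY\, dk_2.
\]

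First I would cut down the range of $Y$. Since $KBK$ is compact in $G$, Lemma 2.25 furnishes $R = R(B)>0$ with $\kappa(KBK\cdot g\cdot KBK)\subset \kappa(g)+\mathfrak{a}(R)$ for every $g$. Thus if $k_1 e^Y k_2\in BgB$ with $k_i\in K_H\subset K$ and $Y\in\mathfrak{a}_H^+$, then the $G$-Cartan projection $\kappa(e^Y)=wY$ (for the unique $w\in W_G$ sending $Y$ into $\mathfrak{a}^+$) lies in $\kappa(g)+\mathfrak{a}(R)$. Consequently the support of the $Y$-integration is contained in
\[
\bigcup_{w\in W_G}\bigl(w^{-1}\kappa(g)+\mathfrak{a}(R)\bigr)\cap \mathfrak{a}_H^+,
\]
a union of at most $|W_G|$ Euclidean balls of radius $R$, whose Lebesgue volume in $\mathfrak{a}_H$ is bounded uniformly in $g$.

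Next I would estimate the integrand pointwise. Using $\sinh t\leq \tfrac{1}{2}e^{t}$ together with Example 2.30 applied to $H$, the Jacobian is at most $C_1 e^{2\rho_H(Y)} = C_1 e^{\rho_{\mathfrak{h}}(Y)}$ for $Y\in\mathfrak{a}_H^+$. The definition of $\beta_{G/H}$ gives $\rho_{\mathfrak{h}}(Y)\leq \beta_{G/H}\,\rho_{\mathfrak{g}}(Y)$ for every $Y\in\mathfrak{a}_H$. For $Y$ in the support above, the $W_G$-invariance of $\rho_{\mathfrak{g}}$ on $\mathfrak{a}$ (Example 2.30 with $\mathfrak{h}=\mathfrak{g}$) combined with its Lipschitz continuity yields
\[
\rho_{\mathfrak{g}}(Y) = \rho_{\mathfrak{g}}(wY) \leq \rho_{\mathfrak{g}}(\kappa(g)) + L\|wY-\kappa(g)\| \leq 2\rho\kappa(g) + LR,
\]
since $\rho_{\mathfrak{g}}|_{\mathfrak{a}^+}=2\rho$. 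Multiplying the uniform volume bound from the previous step by this Jacobian bound produces $\nu_H(H\cap BgB)\leq C e^{2\beta_{G/H}\rho\kappa(g)}$.

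The main obstacle is the step connecting the two Cartan projections: $Y\in\mathfrak{a}_H^+$ is typically \emph{not} in $\mathfrak{a}^+$, so one cannot naively treat it as a small perturbation of $\kappa(g)$. The key observation unlocking the estimate is that $\rho_{\mathfrak{g}}$, being built from absolute values of real parts of eigenvalues, is invariant under the full Weyl group $W_G$; this turns the Weyl-twist $Y\in w^{-1}\kappa(g)+\mathfrak{a}(R)$ into a clean inequality $\rho_{\mathfrak{g}}(Y)\leq 2\rho\kappa(g)+O(1)$, at which point the definition of $\beta_{G/H}$ finishes the argument.
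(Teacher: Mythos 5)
Your proof is correct and follows essentially the same route as the paper: both restrict the $\alieh^+$-support of the Cartan integral on $H$ to $\bigcup_{w\in W_G}(w\kappa(g)+\alie(r))\cap\alieh$ via the compact-error lemma, bound the Jacobian by $e^{\rhoh(Y)}$ using \autoref{eg: rhoh in case of adjoint rep}, and then combine $\rhoh\leq\betagh\rhog$ with the $W_G$-invariance and Lipschitz continuity of $\rhog$ to get $e^{2\betagh\rho\kappa(g)}$ up to constants. The only cosmetic difference is that you re-derive the containment $\kappa_H(h)\in\setdef{w\kappa(h)}\cap\alieh$ from scratch by applying the compact-error lemma to $KBK$, where the paper cites its prepared relation \eqref{eq ppty: kappaH and kappa mod wg}.
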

\begin{proof}
    Write $X=\kappa(g)$. By \autoref{lem: compact error in cartan projection}, there exists a constant $r>0$ such that
    $ \kappa(BgB) \subset \kappa(g) + \alie(r)$ for all $g\in G$. 
    Then \eqref{eq ppty: kappaH and kappa mod wg} yields
    \begin{equation}\label{eq intm: kappa H of BgB}
        H\cap BgB \subset \bigcup_{w\in W_G} K_H e^{(wX+\alie(r)) \cap \alieh} K_H.
    \end{equation}
    Hence, by \autoref{prop: KAK Cartan integral} and \autoref{eg: rhoh in case of adjoint rep}, we have
    \begin{equation}\label{eq intm: upper bound of volume part in H}
        \nuh{H\cap BgB} \leq \sum_{w\in W_G} \int_{(wX+\alie(r))\cap \alieh} \bigexp{\rhoh(Y)} \dif Y.
    \end{equation}
    Since $\rhoh\leq \rhogh \rhog$ by the definition of $\rhogh$, and since $\rhog$ is $W_G$-invariant and uniformly Lipschitz on $\alieh$,
    we have 
    \begin{align*}
        \bigexp{\rhoh(Y)}&\leq \bigexp{\rhogh\rhog(Y)} \leq C_1 \bigexp{\rhogh\rhog(wX)}\\
        &= C_1\bigexp{\rhogh\rhog(X)} = C_1\bigexp{2\rhogh\rho(X)}
    \end{align*}
    uniformly for $Y\in wX+\alie(r)$. We conclude by feeding back to \eqref{eq intm: upper bound of volume part in H}.
\end{proof}

\begin{proof}[Proof of \autoref{prop: delta=rho for reductive subgroups}]
    For $B\Subset G$,
    \autoref{prop: BgB volume growth in G} and \autoref{lem: upper bound on volume growth in H} yield
    \begin{equation*}
        \frac{\log \nuh{H\cap BgB}}{\log \nug{BgB}} \leq \frac{C_1 + 2\rhogh \rho\kappa(g)}{C_2 + 2\rho\kappa(g)}
    \end{equation*}
    uniformly for all $g\in G$. Since $\rho\kappa(g)\to \infty$ as $g\to \infty$, we get $\deltagh\leq \rhogh$.
    
    To show $\deltagh\geq \rhogh$, let $B\Subset G$ and $B_H := B\cap H \Subset H$. Then the intersection $H\cap BhB$ contains $B_H h B_H$ for all $h\in H$. Now \autoref{prop: BgB volume growth in G} applies to the real reductive group $H$ without modification, whence 
    $$\nuh{H\cap Be^X B} \geq ce^{\rhoh(X)}$$
    for all $X\in\alieh$. 
    By the continuity of the homogeneous functions $\rhoh,\rhog$ on $\alieh$, there exists a nonzero vector $X\in\alieh$ satisfying
    $ \rhogh\,\rhog(X) = \rhoh(X)$.
    Setting $g_n := e^{nX} \in H$ with $g_n\to \infty$ in $G$, we obtain
    \begin{equation*}
        \deltagh \geq \limsup_{n\to\infty} \frac{\log \nuh{H\cap Be^{nX} B}}{\log \nug{Be^{nX} B}} \geq \limsup_{n\to\infty} \frac{\log \left(c e^{n\rhoh(X)}\right)}{\log \left(C e^{n\rhog(X)}\right)} =\rhogh
    \end{equation*}
    by applying the lower bound of \autoref{prop: BgB volume growth in G} to $H$ and the upper bound to $G$.
\end{proof}

\subsubsection{Discrete subgroups}
For a discrete subgroup $\Gamma$,
recall that Quint's growth indicator function $\psi_\Gamma: \alie^+\to \bbR\cup\setdef{-\infty}$ is defined by
\begin{equation*}
    \psi_\Gamma(X) := \norm{X} \inf_{\calC\ni X} \inf \setdef{t\in \bbR : \sum_{\gamma\in\Gamma, \,\kappa(\gamma)\in\calC} e^{-t\norm{\kappa(\gamma)}}<\infty},
\end{equation*}
where $\calC$ ranges over open cones in $\alie$ which contain $X$.
One has (cf.\,\cite[\S I.1]{quint2002divergence})
\begin{equation*}
    \sup_{\alie^+} \frac{\psi_\Gamma}{2\rho} = \inf\setdef{t\in\bbR: \sum_{\gamma\in\Gamma} e^{-2t\rho\kappa(\gamma)} < \infty}.
\end{equation*}
We have the following immediate consequence of \autoref{prop: deltagh as the abscissa of convergence}.

\begin{proposition}\label{prop: delta = max of psi/rho, discrete subgroups}
    If $\Gamma$ is a discrete subgroup of $G$, then
    \begin{equation*}
        \deltaggamma = \maxde{\sup_{X\in \alie^+} \frac{\psi_\Gamma(X)}{2\rho(X)}, \,0}.
        \qedineq
    \end{equation*}
\end{proposition}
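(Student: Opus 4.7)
This statement is advertised as an \emph{immediate consequence} of Proposition \ref{prop: deltagh as the abscissa of convergence}, so the plan is simply to specialize that proposition to the discrete case and then quote the series identity displayed just above the statement.

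First I would note that for the discrete group $\Gamma$, the modular function $\Delta_\Gamma$ is trivial (since $\Gamma$ is $0$-dimensional, $\det\Ad_\Gamma \equiv 1$), so the symmetric measure $\nu_\Gamma = \Delta_\Gamma^{-1/2}\,d\gamma$ coincides with the counting measure on $\Gamma$. Plugging this into Proposition \ref{prop: deltagh as the abscissa of convergence} converts the defining integral into a Dirichlet-type series:
\begin{equation*}
    \deltaggamma = \inf\left\{t\in[0,\infty] : \sum_{\gamma\in\Gamma} e^{-2t\rho\kappa(\gamma)} < \infty\right\}.
\end{equation*}

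Next I would invoke the identity displayed immediately before the proposition, namely
\begin{equation*}
    \sup_{\alie^+} \frac{\psi_\Gamma}{2\rho} = \inf\left\{t\in\bbR : \sum_{\gamma\in\Gamma} e^{-2t\rho\kappa(\gamma)} < \infty\right\},
\end{equation*}
which is a standard fact about growth indicators (and is attributed to \cite{quint2002divergence} in the text). The only difference between the two infima is that the first is taken over $[0,\infty]$ while the second is taken over all of $\bbR$. Taking the maximum with $0$ exactly accounts for this restriction, yielding the claimed formula.

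There is no real obstacle here: the proof is a one-line combination of the two formulas, with the single verification that $\nu_\Gamma$ is counting measure in the discrete case. The only minor point worth a sentence is why clipping at $0$ is necessary and not automatic, which is that for finite $\Gamma$ (or more generally whenever the series converges at $t=0$) the series abscissa could a priori be $-\infty$ or negative, while $\deltagh$ is by definition nonnegative.
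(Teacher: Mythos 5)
Your proposal is correct and matches the paper's own (essentially one-line) argument: the paper likewise obtains the result as an immediate consequence of \autoref{prop: deltagh as the abscissa of convergence}, using that $\nu_\Gamma$ is counting measure for discrete $\Gamma$ and the displayed identity relating $\sup_{\alie^+}\psi_\Gamma/2\rho$ to the abscissa of the Dirichlet series. Your remark about why the truncation at $0$ is needed is also the right observation.
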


\begin{example}
    When $\Gamma$ is a discrete subgroup of $G=\SL(2,\bbR)$, the exponent $\delta_{G/\Gamma}$ coincides with the usual critical exponent $\delta_\Gamma$. In general, when the semisimple group $G$ is of real rank one, these two exponents are related by the equality $\delta_\Gamma = d_G \deltaggamma$ where the constant $d_G$ depends only on $G$.
\end{example}

\subsubsection{Volume exponents below $1/2$}
In \autoref{sec unif decay} infra we will prove \autoref{mythm: rho theta delta} which implies that for a closed connected subgroup $H$, if $\deltagh>1/2$, then the two volume exponents $\deltagh = \betagh$ are equal. The following example provides a counterexample to this equality below $1/2$.

\begin{example}\label{eg: delta ne beta for G/N}
    Let $G=\SL(2,\bbR)$ and $N=\setdef{n_s=\begin{pmatrix} 1 & s\\ & 1\end{pmatrix}: s\in \bbR}$ be a unipotent subgroup. Then clearly $\beta_{G/N}=0$. 
    
    Yet we claim $\delta_{G/N}=1/2$.
    Indeed, computing the singular values gives
    $$e^{2\rho\kappa(n_s)} = \frac{s^2 + 2 + \sqrt{s^4 + 4s^2}}{2}, \;\; \forall s\in \bbR.$$
    Thus we can compute the relative growth exponent $\delta_{G/N}=1/2$ through the ``Poincaré integral'' (\autoref{prop: deltagh as the abscissa of convergence}):
    \begin{equation*}
        I(t) = \int_{\bbR} \left( \frac{2}{s^2 + 2 + \sqrt{s^4 + 4s^2}}\right)^t \dif s <\infty \iff t > 1/2.
    \end{equation*}
\end{example}

\section{Decay of coefficients and volume growth}\label{sec decay and vol growth}
The goal of this section is to prove \autoref{mythm: theta and delta} on the following relationship
\begin{equation*}
    \thetagh = \deltagh = 1 - \frac{1}{p_{G/H}} \geq \betagh
\end{equation*}
between the four exponents we have defined.

Since uniform decay is a strong property, the upper bounds on the other exponents by the coefficient decay exponent $\thetagh$ are less complicated. The main difficulty, which we will start with, is to establish uniform decay estimates from other data, for which our method is inspired by \cite{lutsko-weich2024polyhedral}.

Let $G$ be a real semisimple algebraic group and $H$ be a closed subgroup of $G$.
Then by \autoref{lem: delta function and quasi-invariant measure on homogeneous space}, the homogeneous space $G/H$ admits a $G$-quasi-invariant Radon measure $\dif\mu(gH)$ and a continuous density function $\delta: G\to \bbR^+$ which satisfy \eqref{eq ppty: delta-function, measures on homogeneous spaces} and \eqref{eq ppty: integration on homogeneous space} in \autoref{lem: delta function and quasi-invariant measure on homogeneous space}.

\subsection{Matrix coefficients of induced representations}\label{subsec: transform}
As a preliminary step, we transform the matrix coefficients of $\Ltwogh$ into more accessible terms.
For later applications as well, we address more generally the coefficients of an induced unitary representation $(\pi,\Hil)=\Ind_H^G(\sigma, \calV)$.
Setting $\sigma=1_H$ will recover $\Ltwogh$.

Given functions $\varphi_1, \varphi_2 \in\Cc(G)$ and vectors $v_1,v_2\in\calV$, consider the elements $f_i := \rmI(\varphi_i, v_i)\in\Hil$ for $i=1,2$ (constructed in \eqref{eq def: I(phi, v)}). 
To study the matrix coefficient $\Innerprod{\pi(\cdot)f_1,f_2}$, we first expand it into integrals on groups by using the expressions of $f_1,f_2$.
We deduce
\begin{align*}
    &\Innerprod{\pi(g)f_1, f_2}\\
    &= \int_{G/H} \Innerv{f_1(g^{-1}x), f_2(x)} \left(\frac{\delta(g^{-1}x)}{\delta(x)}\right)^{\frac{1}{2}} \dif\mu(xH) &(\textup{def of }\Hil)\\
    &= \int_{G/H} \int_H \Innerv{\varphi_1(g^{-1}xh)\sigma(h)v_1, f_2(x)} \left(\frac{\delta(g^{-1}x)}{\delta(x)}\right)^{\frac{1}{2}} \dif h \dif\mu(xH) &(\textup{def of } f_1)\\
    &= \int_{G/H} \int_H \Innerv{v_1, f_2(xh)}\varphi_1(g^{-1}xh)\left(\frac{\delta(g^{-1}xh)}{\delta(xh)}\right)^{\frac{1}{2}} \dif h \dif \mu(xH) & (\sigma\textup{-eqv})\\
    &= \int_{G} \Innerv{v_1, f_2(x)}\varphi_1(g^{-1}x)\left({\delta(g^{-1}x)}{\delta(x)}\right)^{\frac{1}{2}}\dif x & (\textup{by \eqref{eq ppty: integration on homogeneous space}})\\
    &= \int_{G} \int_H \Innerv{v_1, \sigma(h)v_2} \varphi_1(g^{-1}x)\overline{\varphi_2(xh)} \left({\delta(g^{-1}x)}{\delta(x)}\right)^{\frac{1}{2}} \dif h \dif x & (\textup{def of }f_2)\\
    &= \int_H \Innerv{v_1,\sigma(h)v_2}\int_G \varphi_1(g^{-1}x) \overline{\varphi_2(xh)} \left({\delta(g^{-1}x)}{\delta(x)}\right)^{\frac{1}{2}}\dif x \dif h & (\textup{Fubini})\\
    &= \int_H \Innerv{v_1,\sigma(h)v_2}\int_G \varphi_1(g^{-1}x) \overline{\varphi_2(xh)} \left({\delta(g^{-1}x)}{\delta(xh)}\right)^{\frac{1}{2}}\dif x \dif\nu_H(h), & (\textup{prop of }\delta)
\end{align*}
where we recall that $\dif\nuh{h} = \Delta_H(h)^{-1/2}\dif h$ is the symmetric measure on $H$.
By changing $h$ to $h^{-1}$, we obtain
\begin{equation}\label{eq res: transform of pi coefficients}
    \Innerprod{\pi(g)f_1, f_2} = \int_H \Innerv{\sigma(h)v_1, v_2} \Phi(h,g) \dif\nu_H(h),
\end{equation}
where we define
\begin{equation}\label{eq def: Phi(h,g)}
    \Phi(h, g) := \int_G \varphi_1(g^{-1}x) \overline{\varphi_2(xh^{-1})} \left({\delta(g^{-1}x)}{\delta(xh^{-1})}\right)^{\frac{1}{2}}\dif x.
\end{equation}
Using the volume decay in $G$ studied in the last section, we can now obtain the first estimates for decay of coefficients.
\begin{lemma}\label{lem: first majoration of decay of coefficients}
    Let $B_G$ be the open neighborhood of $e\in G$ given in \autoref{lem: open nbhd B_G, shrink NMAN}.
    For any $v_1,v_2\in \calV$ and any $\varphi_1,\varphi_2\in \Cc(G)$ whose supports $B_i:=\supp\varphi_i$ satisfy $B_iB_i^{-1}\subset B_G$, let $f_i:=\rmI(\varphi_i,v_i)\in \Hil$ for $i=1,2$. Then there exists a constant $C>0$ such that we have uniformly for all $g\in G$,
    \begin{equation*}
        \Modofinner{\pi(g)f_1, f_2} \leq C e^{-2\rho\kappa(g)} \int_{H\cap (B_2^{-1}g B_1)} \Modinnerv{\sigma(h)v_1, v_2} \dif\nu_H(h).
    \end{equation*}
\end{lemma}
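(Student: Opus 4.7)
The plan is to use the formula \eqref{eq res: transform of pi coefficients} to reduce the problem to a uniform pointwise estimate on the kernel $\Phi(h,g)$ defined in \eqref{eq def: Phi(h,g)}, namely $|\Phi(h,g)| \le C\,\bigexp{-2\rho\kappa(g)}$ for $h \in H \cap B_2^{-1} g B_1$ and zero elsewhere. The support statement is immediate from the definition: the integrand vanishes unless $x \in gB_1 \cap B_2 h$, which forces $h \in B_2^{-1} g B_1$. On this support, both $g^{-1}x$ and $xh^{-1}$ lie in the fixed compacta $B_1$ and $B_2$ respectively, so the continuous density $\delta$ and the functions $\varphi_i$ are uniformly bounded, and hence
\[
|\Phi(h,g)| \le C_1\, \nu_G(gB_1 \cap B_2 h),
\]
with $C_1$ independent of $g,h$.

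The heart of the argument is to show $\nu_G(gB_1 \cap B_2 h) \le C_2\,\bigexp{-2\rho\kappa(g)}$ uniformly. To remove the $h$-dependence, I would fix any $x_0 = g b_1^0 = b_2^0 h$ in the intersection (with $b_i^0 \in B_i$) and right-translate by $x_0^{-1}$. The unimodularity of $G$ preserves $\nu_G$, while the elementary identities
\[
(gB_1)x_0^{-1} = g(B_1 (b_1^0)^{-1})g^{-1} \subset g(B_1 B_1^{-1})g^{-1}, \qquad (B_2 h)x_0^{-1} = B_2 (b_2^0)^{-1} \subset B_2 B_2^{-1}
\]
yield $\nu_G(gB_1 \cap B_2 h) \le \nu_G\bigl(g(B_1 B_1^{-1})g^{-1} \cap B_2 B_2^{-1}\bigr)$. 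Since the hypothesis $B_i B_i^{-1} \subset B_G$ places the two compacta inside the open set $B_G$, I can choose $\varphi_1', \varphi_2' \in \Cc(B_G)$ dominating their indicators and apply \autoref{prop: local volume decay in G} to conclude.

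Combining the two displays gives $|\Phi(h,g)| \le C\,\bigexp{-2\rho\kappa(g)}\,\indicator_{B_2^{-1} g B_1}(h)$, and substituting into \eqref{eq res: transform of pi coefficients} yields the stated inequality. The only subtle point is the passage from the $h$-dependent intersection $gB_1 \cap B_2 h$ to the $h$-independent conjugation $g(B_1 B_1^{-1})g^{-1} \cap B_2 B_2^{-1}$; this is precisely the reason for the hypothesis $B_i B_i^{-1} \subset B_G$ and explains why $\kappa(g)$ (rather than $\kappa(h)$, which differs from it only by a compact error in any case) naturally governs the decay.
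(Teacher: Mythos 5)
Your proposal is correct and follows essentially the same route as the paper: both arguments reduce the kernel $\Phi(h,g)$ to a conjugation-type estimate handled by \autoref{prop: local volume decay in G}, your right-translation by a point $x_0\in gB_1\cap B_2h$ being the set-theoretic counterpart of the paper's change of variables $y=xb_1^{-1}g^{-1}$, and both exploit $B_iB_i^{-1}\subset B_G$ in exactly the same way. The only cosmetic difference is that you bound $\varphi_i$ and $\delta$ by constants and dominate indicators of $B_iB_i^{-1}$ by functions in $\Cc(B_G)$, whereas the paper packages these bounds into the auxiliary functions $\tilde{\varphi}_i(x)=\sup_{b\in B_i}|\varphi_i(xb)|\delta(xb)^{1/2}$.
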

\begin{proof}
    If $\Phi(h,g)\ne 0$, then there exists $x_0\in G$ such that $g^{-1}x_0=:b_1\in B_1$ and $x_0h^{-1}=:b_2\in B_2$,
    so $h=b_2^{-1}g b_1 \in B_2^{-1}g B_1$.
    By using the unimodularity of $G$, we deduce
    \begin{align}
        \Phi(h,g) &= \int_G \varphi_1(g^{-1}x) \overline{\varphi_2(xb_1^{-1}g^{-1}b_2)}\left({\delta(g^{-1}x)}{\delta(xh^{-1})}\right)^{\frac{1}{2}} \dif x\nonumber\\
        &= \int_G \varphi_1(g^{-1}ygb_1) \overline{\varphi_2(yb_2)} \left({\delta(g^{-1}ygb_1)}{\delta(yb_2)}\right)^{\frac{1}{2}}\dif y\label{eq intm: transform of Phi(h,g)}
    \end{align}
    by setting $y=xb_1^{-1}g^{-1} = x h^{-1} b_2^{-1}$.
    For $i=1,2$, define
    \begin{equation*}
        \tilde{\varphi}_i(x) := \sup_{b\in B_i} \norm{\varphi_i(xb)}\delta(xb)^{\frac{1}{2}}.
    \end{equation*}
    Then $\tilde{\varphi}_i \in \Cc(B_G)$ by the hypothesis on $B_i$. Applied to $\tilde{\varphi}_1, \tilde{\varphi}_2$, \autoref{prop: local volume decay in G} yields
    \begin{equation*}
        \int_G \tilde{\varphi}_1(g^{-1}yg) \tilde{\varphi}_2(y) \dif y \leq Ce^{-2\rho\kappa(g)}
    \end{equation*}
    uniformly for all $g\in G$. Feeding back to \eqref{eq intm: transform of Phi(h,g)}, we deduce
    \begin{equation*}
        \norm{\Phi(h,g)} \leq Ce^{-2\rho\kappa(g)} \indicator_{B_2^{-1}gB_1}(h)
    \end{equation*}
    uniformly for $g\in G$ and $h\in H$. We conclude by feeding back to \eqref{eq res: transform of pi coefficients}.
\end{proof}

\subsection{From volume growth to uniform decay}
\begin{proposition}\label{prop: theta <= delta}
    $\thetagh\leq \deltagh$.
\end{proposition}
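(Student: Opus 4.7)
The plan is to combine \autoref{lem: first majoration of decay of coefficients}, which controls a matrix coefficient of $\lambdagh$ by a volume inside $H$, with the volume growth estimate in $G$ from \autoref{prop: BgB volume growth in G} and the definition of $\deltagh$. First, by \autoref{cor: I(varphi,1) spans Cc(GH)}, every element of $\Cc(G/H)$ is a finite linear combination of vectors of the form $\rmI(\varphi,1)$ whose support $B=\supp\varphi$ satisfies $BB^{-1}\subset B_G$; so by bilinearity it suffices to establish the bound of the definition of $\thetagh$ for pairs $f_i=\rmI(\varphi_i,1)$ of this type.

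For such a pair, \autoref{lem: first majoration of decay of coefficients} (applied with $\sigma=1_H$, $v_1=v_2=1$) yields
\begin{equation*}
    \Modofinner{\lambdagh(g)f_1,f_2} \leq C\,\bigexp{-2\rho\kappa(g)}\,\nuh{H\cap B_2^{-1}gB_1},
\end{equation*}
where $B_i=\supp\varphi_i$. Enlarging $B_1\cup B_2$ to a symmetric compactum $B\Subset G$ of nonempty interior gives $B_2^{-1}gB_1\subset BgB$, so the $\nu_H$-integral is bounded by $\nuh{H\cap BgB}$. By the definition of $\deltagh$ (\autoref{def: relative exponent of closed subgroups} together with \autoref{rem: cofinal choice of B in relative exponent}), for every $\varepsilon>0$ there exists $R>0$ such that whenever $\norm{\kappa(g)}>R$ we have $\log \nuh{H\cap BgB} \leq (\deltagh+\varepsilon)\log \nug{BgB}$. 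Combined with the upper bound $\nug{BgB}\leq C\bigexp{2\rho\kappa(g)}$ from \autoref{prop: BgB volume growth in G}, this yields $\nuh{H\cap BgB}\leq C'\bigexp{2(\deltagh+\varepsilon)\rho\kappa(g)}$ for $\norm{\kappa(g)}>R$; on the bounded region $\norm{\kappa(g)}\leq R$ both quantities are controlled by compactness, so the estimate persists uniformly on $G$ after enlarging the constant.

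Plugging back, we obtain $\Modofinner{\lambdagh(g)f_1,f_2}\leq C''\bigexp{2(\deltagh+\varepsilon-1)\rho\kappa(g)}$ uniformly in $g\in G$, whence $\thetagh\leq\deltagh+\varepsilon$; letting $\varepsilon\to 0^+$ finishes the proof. I do not anticipate a genuine obstacle here: all the structural work---translating the $\Lspace^2$-matrix coefficient into an $H$-integral against a kernel that already carries the optimal factor $\bigexp{-2\rho\kappa(g)}$---is packaged in \autoref{lem: first majoration of decay of coefficients}, and the remainder is bookkeeping to promote the $\limsup$-type definition of $\deltagh$ to a uniform exponential bound.
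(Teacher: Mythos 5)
Your proposal is correct and follows essentially the same route as the paper's proof: reduce to vectors $\rmI(\varphi,1)$ via \autoref{cor: I(varphi,1) spans Cc(GH)}, apply \autoref{lem: first majoration of decay of coefficients} with $\sigma=1_H$, enlarge $B_1,B_2^{-1}$ to a single $B\Subset G$, and convert the $\limsup$ in the definition of $\deltagh$ into a uniform bound $\nuh{H\cap BgB}\leq C\bigexp{2(\deltagh+\varepsilon)\rho\kappa(g)}$ using \autoref{prop: BgB volume growth in G}. The only difference is that you spell out the $\limsup$-to-uniform-bound bookkeeping slightly more explicitly than the paper does.
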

\begin{proof}
    By \autoref{cor: I(varphi,1) spans Cc(GH)}, the subset
    \begin{equation*}
        \setdef{\rmI(\varphi,1): \varphi\in\Cc(G) \textup{ with } (\supp\varphi)(\supp\varphi)^{-1}\subset B_G}
    \end{equation*}
    spans the space $\Cc(G/H)$. By applying \autoref{lem: first majoration of decay of coefficients} to the induced representation $\lambdagh=\Ind_H^G 1_H$, we deduce for any $f_1,f_2\in \Cc(G/H)$ that
    \begin{equation*}
        \Modofinner{\lambdagh(g)f_1, f_2} \leq C_1 e^{-2\rho\kappa(g)} \nuh{H\cap B_2^{-1}g B_1}
    \end{equation*}
    uniformly for $g\in G$, for some compacta $B_1,B_2$ of $G$.
    Choose $B\Subset G$ which contains $B_2^{-1} \cup B_1$. By the definition of $\deltagh$ and by \autoref{prop: BgB volume growth in G}, given any number $\delta>\deltagh$, there exists a constant $C_1>0$ such that
    \begin{equation*}
        \nuh{H\cap B_2^{-1}g B_1}\leq \nuh{H\cap BgB} \leq C_2 e^{2\delta\rho\kappa(g)}
    \end{equation*}
    for all $g\in G$, whence
    \begin{equation*}
        \Modofinner{\lambdagh(g)f_1, f_2} \leq C_3 e^{-2(1-\delta)\rho\kappa(g)}
    \end{equation*}
    uniformly for $g\in G$. 
    Since this is valid for any functions $f_1,f_2\in \Cc(G/H)$ and any number $\delta>\deltagh$, we have $\deltagh \geq \thetagh$.
\end{proof}

\subsection{From uniform decay to integrability}
Recall that the integrability exponent $p_{G/H}$ optimizes the condition that for all $f_1,f_2\in\Cc(G/H)$ (or equivalently in $\Lspace^\infty_{\mathrm{c}}(G/H)$), we have
\begin{equation*}
    \Innerprod{\lambdagh(\cdot)f_1, f_2} \in \bigcap_{p>p_{G/H}}\Lspace^p(G).
\end{equation*}

\begin{proposition}\label{prop: p <= theta}
    $1- 1/p_{G/H}\leq \thetagh$.
\end{proposition}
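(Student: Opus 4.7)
The plan is to reduce to an explicit Cartan-integral computation, following the Cowling--Haagerup / Benoist--Kobayashi framework. Fix any $\theta > \thetagh$. My first step is to upgrade the $\thetagh$-bound from $\Cc$-functions to characteristic functions: given a compact subset $B \subset G/H$, choose $f \in \Cc(G/H)$ with $0 \leq \indicator_B \leq f$. Since the cocycle $\delta(g^{-1}x)/\delta(x)$ is strictly positive and $\lambdagh$ preserves nonnegativity of functions, the coefficient $\Innerprod{\lambdagh(g)\indicator_B,\indicator_B}$ is itself nonnegative and dominated pointwise by $\Innerprod{\lambdagh(g) f,f}$. The definition of $\thetagh$ then yields a constant $C_1 > 0$ with
$$0 \leq \Innerprod{\lambdagh(g)\indicator_B,\indicator_B} \leq C_1\, e^{2(\theta-1)\rho\kappa(g)} \qquad \text{for all } g \in G.$$

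The second step is to integrate the $p$-th power of this bound against Haar measure on $G$ via the Cartan decomposition (\autoref{prop: KAK Cartan integral}). Since $\kappa$ is $K$-bi-invariant and the Jacobian satisfies $\prod_{\alpha \in \Sigma^+}\sinh^{m_\alpha}\alpha(X) \leq C_2\, e^{2\rho(X)}$ on $\alie^+$ (from $\sinh t \leq \tfrac{1}{2}e^t$), the integral reduces, up to constants coming from the $K$-factors, to
$$\int_{\alie^+} e^{2(p(\theta-1)+1)\rho(X)}\, \dif X.$$
The integrand decays exponentially precisely when $p(\theta-1) + 1 < 0$, i.e.\ when $p > 1/(1-\theta)$. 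To confirm convergence in that regime, I would use polar coordinates $X = rX_0$ with $X_0$ in the spherical polytope $S \cap \alie^+$: the radial factor $\int_0^\infty r^{\dim\alie - 1}\, e^{-c r\,\rho(X_0)}\, \dif r$ is finite for $c > 0$, and $\rho(X_0)$ is bounded below by a positive constant on the compact set $S \cap \alie^+$, because $\rho$, being a strictly positive linear combination of the positive roots, is strictly positive on $\alie^+ \setminus \{0\}$.

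Taking $\theta \downarrow \thetagh$ and $p \downarrow 1/(1-\thetagh)$ then gives $p_{G/H} \leq 1/(1-\thetagh)$; the case $\thetagh = 1$ is trivial. The conceptual obstacle is modest --- this is the easy half of the Cowling--Haagerup characterization --- but the two points that require genuine care are the positivity/domination argument used to transfer the $\Cc$-bound to indicators $\indicator_B$, and the convergence verification of the integral over $\alie^+$, which relies crucially on $\rho$ being an \emph{interior} element of the positive Weyl chamber in $\alie^*$ so that $\rho$ does not degenerate on the walls of the spherical slice.
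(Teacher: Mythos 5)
Your proof is correct and follows essentially the same route as the paper: bound the coefficient by $Ce^{2(\theta-1)\rho\kappa(g)}$ for $\theta>\thetagh$, then integrate the $p$-th power via the $KAK$ integration formula and observe convergence for $p>1/(1-\theta)$. The only difference is that you explicitly justify the passage from $\Cc(G/H)$ test functions to indicators $\indicator_B$ via positivity and domination, a point the paper dispatches with a one-line remark before the proposition; your verification of the convergence of the integral over $\alie^+$ (using that $\rho>0$ on $\alie^+\setminus\{0\}$) is likewise a correct filling-in of a step the paper leaves implicit.
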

\begin{proof}
    Suppose $\thetagh<1$.
    For any $\theta> \thetagh$ and any $f_1,f_2\in\Cc(G/H)$, we obtain from \autoref{def: uniform decay exponent} that
    \begin{equation*}
        \Modofinner{\lambdagh(g)f_1,f_2} \leq Ce^{(2\theta-2)\rho\kappa(g)}
    \end{equation*}
    uniformly for $g\in G$. Then by applying \autoref{prop: KAK Cartan integral}, we have
    \begin{equation*}
        \int_G \Modofinner{\lambdagh(g)f_1,f_2}^{p} \dif g \leq C \int_{\alie^+} e^{(2\theta p-2p + 2)\rho(X)} \dif X,
    \end{equation*}
    which is finite as long as $\theta<1-1/p$. By the arbitrariness of $\theta>\thetagh$, we conclude that $1-1/p_{G/H} \leq \thetagh$.
\end{proof}

\subsection{From integrability to volume growth}
\begin{lemma}\label{lem: transform of coefficient for 1B}
    Let $B$ be a symmetric compact neighborhood of $e$ in $G$. For 
    \begin{equation*}
        f(xH) = \int_H \indicator_{B}(xh)\delta(xh)^{-\frac{1}{2}}\dif h \in \Lspace_{\mathrm{c}}^\infty(G/H),
    \end{equation*}
    we have uniformly for all $g\in G$ that
    \begin{equation*}\label{eq res: transform}
        \int_{BBgBB} \Innerprod{\lambdagh(x)f, f} \dif x \geq \nug{B}^2 \nuh{H\cap BgB},
    \end{equation*}
    where $\dif x$ denotes the Haar measure of $G$.
\end{lemma}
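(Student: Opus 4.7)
The plan is to unfold the matrix coefficient $\Innerprod{\lambdagh(x)f, f}$ using the general transform from \S 4.1, and then exchange the order of integration via Fubini.

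First, I would specialize the coefficient formula \eqref{eq res: transform of pi coefficients}--\eqref{eq def: Phi(h,g)} to the induced representation $\lambdagh = \Ind_H^G 1_H$ by taking $\varphi_1 = \varphi_2 = \indicator_B\,\delta^{-1/2}$ and $v_1 = v_2 = 1$. With these choices, $\rmI(\varphi_i, 1) = f$ by the very definition of $f$ in the statement, and since $G$ is unimodular, the $\delta$-weights appearing in \eqref{eq def: Phi(h,g)} cancel pairwise, leaving
\begin{equation*}
    \Innerprod{\lambdagh(x)f, f} = \int_H \nug{xB \cap Bh}\,\dif\nu_H(h).
\end{equation*}

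Second, I would substitute this into the left-hand side and apply Fubini to pull the $H$-integration outside:
\begin{equation*}
    \int_{BBgBB}\Innerprod{\lambdagh(x)f, f}\,\dif x = \int_H \left(\int_{BBgBB}\nug{xB \cap Bh}\,\dif x\right)\dif\nu_H(h).
\end{equation*}
Since $B$ is symmetric, the integrand $\nug{xB \cap Bh}$ is nonzero only when $x \in BhB$. For $h \in H \cap BgB$, the key inclusion $BhB \subseteq B\cdot BgB\cdot B = BBgBB$ means the restriction to $BBgBB$ is vacuous, and another application of Fubini together with the unimodularity of $G$ yields
\begin{equation*}
    \int_G \nug{xB \cap Bh}\,\dif x = \nug{B}\cdot \nug{Bh} = \nug{B}^2.
\end{equation*}

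Finally, one has to show that the contributions from $h \in H \setminus BgB$ do not survive, so that summing over $H \cap BgB$ gives exactly $\nug{B}^2 \nuh{H \cap BgB}$. I expect this to be the main obstacle: the integration region $BBgBB$ is calibrated precisely so that $BhB \subseteq BBgBB$ for every $h \in H\cap BgB$ (guaranteeing the full $\nug{B}^2$-mass is captured) while being tight enough to block spurious mass from more distant $h$. The first two steps are pure bookkeeping with Fubini, left invariance, and symmetry of $B$; the delicate point is the matching between the $H$-support of $\nug{xB \cap Bh}$ and the shape of the tubular neighborhood $BBgBB$.
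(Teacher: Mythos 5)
Your route is the same as the paper's: specialize the transform of \autoref{subsec: transform} to $\varphi_1=\varphi_2=\indicator_B\,\delta^{-1/2}$ to get $\Innerprod{\lambdagh(x)f,f}=\int_H \nug{xB\cap Bh}\,\dif\nu_H(h)$, apply Fubini, and evaluate the inner integral as $\nug{B}^2$ for $h\in H\cap BgB$ via the inclusion $BhB\subset BBgBB$. Those steps are correct and match the paper's proof, up to one small misattribution: the cancellation of the $\delta$-weights in $\Phi(h,g)$ comes from the factor $\delta^{-1/2}$ built into the $\varphi_i$, not from unimodularity of $G$; unimodularity is what you need later for $\nug{Bh}=\nug{B}$.

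The obstacle you flag at the end is genuine, and you are right that it is the delicate point: the function $x\mapsto\nug{xB\cap Bh}$ is supported on $BhB$, so after Fubini the $h$-integral ranges over $H\cap BBBgBBB$, and the contributions from $h\in H\cap\left(BBBgBBB\setminus BgB\right)$ are nonnegative but in general strictly positive. (For instance, if $g$ is such that $H\cap BgB=\emptyset$ while $H$ meets the interior of $BBgBB$, the right-hand side of \eqref{eq res: transform} vanishes but the left-hand side does not.) So the stated equality cannot be rescued; what is true is the inequality $\int_{BBgBB}\Innerprod{\lambdagh(x)f,f}\,\dif x\geq \nug{B}^2\,\nuh{H\cap BgB}$, which is exactly what your two completed steps prove. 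The paper's own proof passes over this point by restricting the Fubini'd integral to $H\cap BgB$ without justification, so it contains the same gap you identified. Since only the lower bound is used in the proof of \autoref{prop: p >= delta}, your argument, stopped where it is, already establishes everything needed downstream; the right fix is to weaken the lemma to an inequality rather than to try to ``block the spurious mass.''
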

\begin{proof}
    Let us apply the preliminary computations in \autoref{subsec: transform} to the induced representation $\lambdagh = \Ind_H^G 1_H$ and the functions
    $$\varphi_1(x) = \varphi_2(x) = \indicator_{B}(x) \delta(x)^{-\frac{1}{2}}\in \Lspace^\infty_{\mathrm{c}}(G),$$
    so that the functions $f_1,f_2$ therein coincide with $f$.
    Now \eqref{eq def: Phi(h,g)} becomes
    $\Phi(h,g) = \nug{gB \cap Bh}$
    and thus \eqref{eq res: transform of pi coefficients} yields
    \begin{align*}
        \Innerprod{\lambdagh(g)f_1, f_2} = \int_{H} \nug{gB\cap Bh} \dif\nu_H(h).
    \end{align*}
    Let $B_g:= BBgBB$ for $g\in G$. By Fubini's theorem, we have
    \begin{align}
        &\int_{B_g} \Innerprod{\lambdagh(x)f_1, f_2} \dif x = \int_{B_g} \int_{H} \nug{xB\cap Bh}  \dif\nu_H(h) \dif x\nonumber\\
        &\geq \int_{H\cap BgB} \int_G \indicator_{B_g}(x) \nug{xB\cap Bh}\dif x \dif \nuh{h}.\label{eq intm: uniform lower bound of coefficient, but with int}
    \end{align}
    Given any $h\in H\cap BgB$, we can write $h= b_1gb_2$ for some $b_1,b_2\in B$.
    Then from Fubini's theorem, the symmetric property of the subset $B$, and the unimodularity of the group $G$, we deduce
    \begin{align*}
        \int_G \indicator_{B_g}(x) \nug{xB\cap Bh}\dif x &= \int_G \int_G \indicator_{B_g}(x) \indicator_{xB}(y) \indicator_{Bb_1gb_2}(y) \dif y \dif x\\
        &= \int_G \nug{BBgBB \cap yB} \indicator_{Bb_1gb_2}(y) \dif y\\
        &\geq \int_{Bb_1 gb_2} \nug{B}\dif y = \nug{B}^2,
    \end{align*}
    whence we conclude by feeding back to \eqref{eq intm: uniform lower bound of coefficient, but with int}.
\end{proof}

\begin{proposition}\label{prop: p >= delta}
    $\deltagh\leq 1-1/p_{G/H}$.
\end{proposition}
\begin{proof}
    By the definition of the relative volume growth exponent $\deltagh$ and \autoref{rem: cofinal choice of B in relative exponent}, for any number $\delta < \deltagh$ there exists a compact, symmetric, $K$-bi-invariant neighborhood $B$ of $e$ in $G$, and a sequence $(X_n)$ in $\alie^+$ going to infinity, such that
    \begin{equation*}
        \log \nuh{H\cap Be^{X_n}B} \geq \delta \log \nug{Be^{X_n} B}
    \end{equation*}
    uniformly for $n\in\bbN$, whence we can deduce from \autoref{prop: BgB volume growth in G} that
    \begin{equation}\label{eq intm: lower bound on volume growth xn}
        \nuh{H\cap Be^{X_n}B} \geq c_1 e^{2\delta\rho(X_n)}
    \end{equation}
    uniformly for $n\in\bbN$. By passing to a subsequence, we can assume that the subsets $BBe^{X_n} BB$ with $n\in\bbN$ are pairwise disjoint.

    Let the function $f\in\Lspace^2(G/H)$ be given as in \autoref{lem: transform of coefficient for 1B}.
    We proceed to study the integrability of the matrix coefficient $\Innerprod{\lambdagh(\cdot)f,f}$. By applying the H\"older inequality, we obtain for $p>1$ that
    \begin{equation*}
        \int_{BBgBB} \Modofinner{\lambdagh(x)f,f}^p\dif x \geq \nug{BBgBB}^{1-p} \left(\int_{BBgBB}\Innerprod{\lambdagh(x)f, f}\dif x\right)^p.
    \end{equation*}
    Further from \autoref{lem: transform of coefficient for 1B} and \autoref{prop: BgB volume growth in G}, we deduce
    \begin{equation*}
        \int_{BBgBB} \Modofinner{\lambdagh(x)f,f}^p\dif x\geq c_2 e^{2(1-p)\rho\kappa(g)} \nuh{H\cap BgB}^p
    \end{equation*}
    uniformly for all $g\in G$.
    By setting $g=e^{X_n}$ and applying \eqref{eq intm: lower bound on volume growth xn}, we obtain
    \begin{equation*}
        \int_{BBe^{X_n}BB} \Modofinner{\lambdagh(x)f,f}^p\dif x \geq c_3 e^{(2-2p+2p\delta)\rho(X_n)}
    \end{equation*}
    uniformly for $n\in\bbN$. If $\Innerprod{\lambdagh(\cdot)f,f}\in\Lspace^p(G)$, then we have
    \begin{equation*}
        \sum_{n\in\bbN} e^{2(1-p+p\delta)\rho(X_n)} < \infty,
    \end{equation*}
    which implies that $1-1/p>\delta$. By the arbitrariness of $p>p_{G/H}$ and $\delta<\deltagh$, we conclude that $1-1/p_{G/H}\geq \deltagh$.
\end{proof}

\subsection{Local lower bound of coefficient decay}\label{subsec: beta leq theta}
\begin{proposition}\label{prop: beta <= theta}
    For any closed subgroup $H$ of $G$, we have
    $$ \betagh \leq \thetagh.$$
\end{proposition}

\begin{proof}
The philosophy of the proof is that the local dynamics near $eH\in G/H$ mimics the dynamics on $\glie/\hlie$.
We shall incarnate this idea, even though generally we have no canonical local chart for the homogeneous space $G/H$.
Let us decompose the proof into several steps.

\vspace*{4pt}
\noindent \emph{Step 1. Preliminary reduction.}

It suffices to show that $\rho_\hlie(X) \leq \thetagh \rho_\glie(X)$ for every element $X\in\hlie$.
From now on we fix a nonzero $X\in \hlie$ which, by the Jordan decomposition, splits uniquely as a commuting sum $X=X_s+X_n$ of a semisimple element $X_s$ and a nilpotent element $X_n$ inside $\glie$, meaning that the operators $\ad X_s, \ad X_n \in \GL(\glie)$ are respectively semisimple, nilpotent. In general, the element $X_s$ may not be in $\hlie$, but we always have $[X_s, \hlie] \subset \hlie$ since $\ad X_s$ can be written as a real polynomial of $\ad X$.
We have $\rho_\hlie(\ad X_s) = \rho_\hlie(X)$ and $\rho_\glie(X_s) = \rho_\glie(X)$.
Write $x_s := \ad X_s$ and $x_n := \ad X_n$.

We henceforth fix a norm $\norm{\cdot}$ on $\glie$.
By the nilpotency of $x_n\in \End(\glie)$, for some constant $C\geq 1$ and $d=\dim\glie$ we have
\begin{equation}\label{eq lem: xn grows poly t}
    \opnorm{\exp(-tx_n)} \leq Ct^d, \;\;\textup{for all } t\geq 1.
\end{equation}

\noindent \emph{Step 2. A choice of the local chart.}

By the semisimplicity of $x_s$, there is an $x_s$-invariant complementary subspace $\wlie$ with $\glie = \hlie \oplus \wlie$, but $\wlie$ is not necessarily invariant by $\ad X$ or $x_n$.
Now the space $\wlie$ is naturally identified with $\glie/\hlie$, with the same action of $x_s$, and we will use the local chart $E: \wlie \to G/H$ with $E(Y) := e^Y H$.
We fix a convex compact neighborhood $D$ of $0$ in $\wlie$ such that $E$ restricts a homeomorphism onto its image denoted by $B\Subset G/H$. Note that
\begin{equation}\label{eq lem 1: local exp}
    \textup{there exists $r>0$, such that $e^Y H\in B$ for all $Y\in \glie$ with $\norm{Y}\leq r$.}
\end{equation}
Up to shrinking $D$, we can assume that the elements in $D$ have norm less than $r$ and we have $E(D)\subset B$ instead of equality.

\vspace*{4pt}
\noindent \emph{Step 3. A technical lemma.}
\begin{lemma}\label{lem: lower bound on G/H, polynomial error}
    For all $t\geq 1$, let $D_t:= \frac{1}{Ct^d} D \Subset \wlie$. Then we have
    $$  E\!\left( \exp(t x_s) D_t \cap D_t\right) \subset  e^{tX}B \cap B. $$
\end{lemma}
\begin{proof}[Proof (lemma)]
    Let $Y\in \exp(t x_s) D_t \cap D_t$. Then we have $e^Y H \in B$ since $D_t \subset D$. It suffices to prove that $e^{-tX}e^Y H \in B$. By assumption, there exists $Y_0\in D_t$ with $Y = e^{t x_s} Y_0$. We deduce $\Ad(e^{-tX})Y = e^{-tx_n}Y_0$ and furthermore by \eqref{eq lem: xn grows poly t} that 
    $$\norm{\Ad(e^{-tX})Y} \leq \opnorm{\exp(-tx_n)}\norm{Y_0} \leq r.$$
    By \eqref{eq lem 1: local exp} and the fact that $X\in\hlie$, we have
    \begin{equation*}
        e^{\Ad(e^{-tX})Y}H = e^{-tX}e^Y H \in B. \qedhere
    \end{equation*}
\end{proof}

\noindent \emph{Step 4. Local decay from linear algebra.}

By \autoref{lem: lower bound on G/H, polynomial error} and by taking into account the Radon--Nikodym derivatives, we deduce, uniformly for $t\geq 1$ that
\begin{equation*}
    \Innerprod{\lambda_{G/H}(e^{tX})\indicator_{B}, \indicator_{B}}  \geq c_1\, e^{-\Tr_{\wlie}(tx_s)/2}\Vol_{\wlie}\!\left(e^{tx_s}D_t\cap D_t\right),
\end{equation*}
with $D_t = \frac{1}{Ct^d} D$. By \autoref{cor: rho fun semisimple and volume decay}, for the constant $d_1:= d \dim\wlie$ and for all $t\geq 1$ we have uniformly,
\begin{equation*}
    e^{-\Tr_{\wlie}(tx_s)/2}\Vol_{\wlie}\!\left(e^{tx_s}D_t\cap D_t\right) \geq c_2 t^{-d_1} e^{-t\rho_{\wlie}(x_s)}.
\end{equation*}
But since $\rho_\wlie(x_s)=\rho_\wlie(X)$ (by $[x_s,x_n]=0$), we have for all $t\geq 1$,
\begin{equation*}
    \Innerprod{\lambda_{G/H}(e^{tX})\indicator_{B}, \indicator_{B}}  \geq c_3 t^{-d_1} e^{-t\rho_{\wlie}(X)}.
\end{equation*}
But by the definition of $\thetagh$, for any $\theta>\thetagh$ we have uniformly for $t>0$,
\begin{equation*}
    \Innerprod{\lambda_{G/H}(e^{tX})\indicator_{B}, \indicator_{B}} \leq C_4 e^{(\theta-1)\rho_\glie(tX)}.
\end{equation*}
By sending $t\to +\infty$ in $t^{-d_1} e^{-t\rho_{\wlie}(X)}\leq C_5 e^{(\theta-1)t\rho_\glie(X)}$, we get
$$ -\rho_\glie(X) + \rho_{\hlie}(X) = \rho_{\wlie}(X) \leq (\theta-1)\rho_\glie(X),$$
whence $\rho_\hlie(X) \leq \theta_{G/H}\rho_\glie(X)$, completing the proof of \autoref{prop: beta <= theta}.
\end{proof}

\subsection{Conclusion of proofs}
\begin{proof}[Proof of \autoref{mythm: theta and delta}]
    The combination of \autoref{prop: theta <= delta}, \autoref{prop: p <= theta}, \autoref{prop: p >= delta}, and \autoref{prop: beta <= theta} yields
    \begin{align*}
        \deltagh \geq \thetagh \geq 1- \frac{1}{p_{G/H}} \geq \deltagh,\;\; \thetagh \geq \betagh.
    \end{align*}
    Hence, we conclude the proof of the theorem.
\end{proof}

Applications to special classes of subgroups are immediate.

\begin{proof}[Proof of \autoref{mythm: reductive subgp, delta=theta=rho}]
    When the subgroup $H$ is reductive, it follows from \autoref{mythm: theta and delta} and \autoref{prop: delta=rho for reductive subgroups} that
    \begin{equation*}
        \thetagh = \deltagh = \rhogh.
        \qedhere
    \end{equation*}
\end{proof}

\begin{proof}[Proof of \autoref{mythm: discrete subgp, delta=theta=}]
    When the subgroup $\Gamma$ is discrete, it follows from \autoref{mythm: theta and delta} and \autoref{prop: delta = max of psi/rho, discrete subgroups} that
    \begin{equation*}
        \theta_{G/\Gamma} = \deltaggamma = \maxde{\sup_{\alie^+} \frac{\psi_\Gamma}{2\rho},\, 0}.
        \qedhere
    \end{equation*}
\end{proof}

\section{Uniform decay of induced representations}\label{sec unif decay}
The goal of this section is to prove \autoref{mythm: rho theta delta}. Thanks to \autoref{mythm: theta and delta}, it will suffice to show that above $1/2$, the coefficient decay exponent $\thetagh$ is bounded from above by the local volume decay exponent $\betagh$.

We will first prove the statement for algebraic subgroups (\textcolor{blue}{Sections}~\ref{subsec: ingredients bk}--\ref{subsec: proof for alg subgp}) and then extend to all closed subgroups with finitely many components (\autoref{subsec: extend to connected subgroups}).
We will establish uniform decay estimates by applying the strategy of Benoist--Kobayashi in \cite{benoist-kobayashi2022tempered2}.
More specifically, through a chain of algebraic subgroups, we will reduce the unitary induction from $H$ to $G$ into the composition of several ones which are easier to handle. 
The rich structure theory of semisimple groups allows us to absorb the local data into each intermediate induced representation.

\subsection{Ingredients from Benoist--Kobayashi}\label{subsec: ingredients bk}
From now on, we fix $H$ to be an algebraic subgroup of the real semisimple algebraic group $G$.
Since the values of the four exponents remain unchanged if we pass to an open subgroup of finite index, we can assume $H$ to be Zariski connected.

In what follows, we recall several ingredients in the strategy of Benoist--Kobayashi in \cite{benoist-kobayashi2022tempered2}.
The first ingredient is the existence of nice intermediate subgroups.
\begin{lemma}[{\cite[Lem 4.1]{benoist-kobayashi2022tempered2}}]\label{lem: nice intermediate subgroups}
    There exist two intermediate algebraic subgroups $H\subset F\subset Q\subset G$ with the following properties:
    \begin{enumerate}
        \item $Q$ is a parabolic subgroup of $G$ of minimal dimension containing $H$.
        \item Let $U$ be the unipotent radical of $Q$. There exists a Levi decomposition $Q = LU$ such that $L$ is a maximal reductive subgroup of $Q$ and that $H = (L\cap H)(U\cap H)$.
        \item $S:=L\cap H$ is a maximal reductive subgroup of $H$ and $V:=U\cap H$ is the unipotent radical of $H$.
        \item $F= SU$.
    \end{enumerate}
    Thus, we have a chain of algebraic subgroups with compatible Levi decompositions
    \begin{equation*}
        H=SV \subset F=SU \subset Q= LU \subset G.
        \qedineq
    \end{equation*}
\end{lemma}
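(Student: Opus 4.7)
The plan is to combine the Chevalley-Mostow Levi decomposition of $H$ with a minimality argument for $Q$ based on the Borel-Tits theorem. Since $H$ is algebraic and Zariski connected, I first write $H = S \ltimes W$, where $W$ is the unipotent radical of $H$ and $S$ is a maximal reductive subgroup (unique up to $W$-conjugation in characteristic zero). Then I pick $Q$ to be a parabolic subgroup of $G$ of minimal dimension containing $H$; such a $Q$ exists because $G$ itself is parabolic. Write its Levi decomposition as $Q = L \ltimes U$.

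The crux of the proof is the inclusion $W \subset U$. Suppose for contradiction that $W \not\subset U$, and let $\pi\colon Q \to Q/U \cong L$ be the quotient projection. Then $\pi(W)$ is a nontrivial unipotent subgroup of $L$, and it is in fact the unipotent radical of $\pi(H) \subset L$. By the Borel-Tits theorem applied inside the reductive group $L$, there is a proper parabolic $P' = L'U'$ of $L$ with $\pi(W) \subset U'$ and $N_L(\pi(W)) \subset P'$; since $\pi(H)$ normalizes its own unipotent radical $\pi(W)$, this forces $\pi(H) \subset P'$. Pulling back, $\pi^{-1}(P') = L'U'U$ is a parabolic of $G$ contained in $Q$ of strictly smaller dimension (because $P' \subsetneq L$) that still contains $H$, contradicting the minimality of $Q$. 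Hence $W \subset U$.

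With this in hand, the rest is bookkeeping. The subgroup $U \cap H$ is a normal unipotent subgroup of $H$, so it is contained in $W$; combined with $W \subset U$, this yields $U \cap H = W$. For $S$: since $S$ is a reductive subgroup of $Q$, the Malcev-Mostow conjugacy of Levi factors of $Q$ allows me to replace $L$ by a $U$-conjugate so that $S \subset L$. Then for any $h \in L \cap H$, writing $h = sw$ with $s \in S$, $w \in W$ gives $w = s^{-1}h \in L \cap U = \{e\}$, so $h = s \in S$; this proves $L \cap H = S$ and therefore $H = SW = (L \cap H)(U \cap H)$. Setting $F := SU$, which is an algebraic subgroup since $S \subset L$ normalizes the normal subgroup $U$ of $Q$, yields the intermediate chain $H = SW \subset SU = F \subset LU = Q$.

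The main obstacle is the step $W \subset U$, which requires the canonical-parabolic form of the Borel-Tits theorem (for a nontrivial unipotent subgroup $N$ of a reductive group, there is a proper parabolic whose unipotent radical contains $N$ and which also contains $N_G(N)$), together with the standard bijection $P' \leftrightarrow P'U$ between parabolics of $L$ and parabolics of $G$ contained in $Q$. Once that inclusion is established, every other property in the statement follows from straightforward manipulation of the Levi decomposition.
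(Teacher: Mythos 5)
Your proof is correct. Note that the paper itself gives no argument for this lemma --- it simply cites \cite[Lem 4.1]{benoist-kobayashi2022tempered2} --- and your route (minimal parabolic $Q\supset H$, Borel--Tits applied to the image of $R_u(H)$ in the Levi quotient to force $R_u(H)\subset R_u(Q)$, then Mostow conjugacy to place $S$ inside $L$) is essentially the standard proof given in that reference, so there is nothing to flag.
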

The notation of these groups will be standing from \autoref{subsec: ingredients bk} to \autoref{subsec: proof for alg subgp}, and their Lie algebras will be denoted by the corresponding Fraktur letters.
We can suppose that $Q\ne G$, for otherwise the algebraic subgroup $H$ is already reductive and we can conclude by \autoref{mythm: reductive subgp, delta=theta=rho}.

The second ingredient is the domination of group actions. On the homogeneous space $U/V$, the reductive group $S$ acts by conjugation and the unipotent group $U$ acts by left translation.
\begin{lemma}[{\cite[Prop 4.4]{benoist-kobayashi2022tempered2}}]\label{lem: domination of group action}
    Let $U/V$ be equipped with a $U$-invariant Radon measure $\Vol$.
    Then for every compact subset $D\Subset U/V$, there exists a compact subset $D_0\Subset U/V$ such that we have for all $s\in S$ and $u\in U$,
    \begin{equation*}
        \Vol(suD\cap D) \leq \Vol(sD_0\cap D_0).
        \qedineq
    \end{equation*}
\end{lemma}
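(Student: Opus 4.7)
The plan is to use the $U$-invariance of $\Vol$ to eliminate the $u$-dependence on the right-hand side, and then to construct $D_0$ by a Minkowski-difference type argument adapted to the homogeneous space $U/W$.

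\textbf{Reduction via $U$-invariance.} Since $S$ normalizes both $U$ and $W$, the conjugation action of $s\in S$ defines a diffeomorphism $\phi_s$ of $U/W$. For $s\in S$ and $u\in U$, setting $u_s := sus^{-1}\in U$, a direct computation yields $suD = u_s\cdot \phi_s(D)$. The $U$-invariance of $\Vol$ then gives
\begin{equation*}
    \Vol(suD\cap D) = \Vol\bigl(u_s\phi_s(D)\cap D\bigr) = \Vol\bigl(\phi_s(D)\cap u_s^{-1}D\bigr).
\end{equation*}
As $u_s$ ranges freely over $U$ when $u$ does, it is enough to find a compact $D_0\supset D$ such that $\Vol(\phi_s(D)\cap vD)\leq \Vol(\phi_s(D_0)\cap D_0)$ for all $s\in S$ and $v\in U$.

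\textbf{Minkowski-difference construction.} Since $U$ is a simply connected unipotent Lie group, I would pick a continuous section $\sigma:U/W\to U$ and let $\tilde D := \sigma(D)\subset U$ be the corresponding compact lift. For any point $p\in \phi_s(D)\cap vD$, we may write $p = \phi_s(y) = v\cdot z$ for some $y,z\in D$. The key observation is that for any other $q\in \phi_s(D)\cap vD$ with $q = \phi_s(y')= v\cdot z'$, the ``offset'' $p^{-1}q$ lifts to $\tilde D^{-1}\tilde D$ on one side and to $\phi_s(\tilde D^{-1}\tilde D)$ on the other. Hence, translating the intersection by the unique element of $U$ sending $p$ to a fixed reference point in $D$, the translated intersection lies simultaneously in a translate of $D_0$ and of $\phi_s(D_0)$, where $D_0$ is the image of $\tilde D\cdot \tilde D^{-1}\cdot \tilde D$ in $U/W$ --- a compact set depending only on $D$. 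One more application of $U$-invariance of $\Vol$ then yields the desired inequality.

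\textbf{Main obstacle.} The technical heart of the argument is uniformity in both $s$ and $v$ simultaneously. In an abelian setting this is just the classical Minkowski-type estimate, but here one must carefully disentangle the conjugation $\phi_s$, which can be arbitrarily stretching, from the left translation by $U$; moreover, $W$ need not be normal in $U$, so $U/W$ has no native group structure and the ``difference'' must be computed by lifting to $U$. Ensuring that $D_0$ accommodates all relevant conjugates and translates while remaining compact and independent of $s$ and $v$ is the nontrivial bookkeeping step.
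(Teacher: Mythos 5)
This lemma is not proved in the paper at all: it is imported verbatim from \cite[Prop 4.4]{benoist-kobayashi2022tempered2}, so there is no internal argument to compare against, and your proposal has to stand on its own. Your first step is fine: writing $suD=u_s\phi_s(D)$ with $u_s=sus^{-1}$ and using $U$-invariance of $\Vol$ correctly reduces the problem to bounding $\Vol(\phi_s(D)\cap vD)$ by $\Vol(\phi_s(D_0)\cap D_0)$. Your Minkowski-type argument is also genuinely correct in the special case $W=\{e\}$ (or $W$ normal in $U$): there $U/W$ is a group, and if $p_0$ lies in the intersection then left translation by $p_0^{-1}$ carries $\phi_s(D)\cap vD$ into $\phi_s(D^{-1}D)\cap D^{-1}D$, which gives the bound since left translation preserves $\Vol$.

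The gap is in the step handling nontrivial $W$, and it is exactly where the entire content of the lemma lies. A point $q\in\phi_s(D)\cap vD$ has two natural lifts to $U$: one of the form $\phi_s(y)$ with $y\in\tilde D$, and one of the form $vz$ with $z\in\tilde D$. These two lifts differ by right multiplication by an element $w\in W$ which is \emph{not} bounded in terms of $D$ (it depends on $s$ and $v$ and can be arbitrarily large). Whichever lift $\tilde p_0$ of the reference point you choose to translate by, one of the two inclusions $\tilde p_0^{-1}\phi_s(D)\subset\phi_s(D_0)$ or $\tilde p_0^{-1}(vD)\subset D_0$ holds with a fixed compact $D_0$, but the other picks up an uncontrolled left factor from $W$. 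Since $W$ is not normal in $U$, the union $\bigcup_{w\in W}wC$ has unbounded image in $U/W$ for a generic compact $C$ (in the Heisenberg group with $W=\{(t,0,0)\}$, the image of $W\cdot\{(0,1,0)\}$ in $U/W$ is the unbounded line $\{(1,t)\}$), so this factor cannot simply be absorbed into $D_0$. Hence your key assertion that ``the offset $p^{-1}q$ lifts to $\tilde D^{-1}\tilde D$ on one side and to $\phi_s(\tilde D^{-1}\tilde D)$ on the other'' is false as stated: the two offsets are offsets of \emph{different} lifts of the same coset. One can check in low-rank examples that the desired containment can sometimes be rescued by intersecting the two individually unbounded constraints and exploiting the algebraic relation tying $s$, $v$ and $w$ together, but that is precisely the ``nontrivial bookkeeping'' you defer to the end, and it is not bookkeeping --- it is the theorem. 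As written, the proposal proves the lemma only when $W$ is trivial or normal in $U$, and for the general case you should follow the argument of Benoist--Kobayashi rather than present it as a routine adaptation of the abelian Minkowski estimate.
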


Since $F/H=U/V$, this lemma states that sufficiently many matrix coefficients of the unitary representation $(\lambda_{F/H}, \Lspace^2(F/H))$ are dominated by those of the unitary representation $(\sigma_0, \Lspace^2(U/V))$ of $F$, whose action is given by $\sigma_0(su) = \lambda_{F/H}(s)$ for all $s\in S$ and $u\in U$, cf.\,\cite[\S 4.1]{benoist-kobayashi2023tempered4}.

Now this majoration carries to induced representations (cf.\,\cite[\S 4.2]{benoist-kobayashi2023tempered4}), which implies that the coefficients of $\lambda_{Q/H}=\Ind_F^Q \lambda_{F/H}$ (\autoref{lem: induction by stage}) are dominated by those of the induced representation $\pi_0:= \Ind_F^Q \sigma_0.$
We have thus obtained the following conclusion, cf.\,also \cite[Prop 4.9]{benoist-kobayashi2022tempered2}.
\begin{lemma}\label{prop: domination of Q/H}
    For any $f_1, f_2\in \Cc(Q/H)$, there exist a nonnegative function $\varphi_0\in \Cc(L)$ and a compact subset $D_0 \Subset U/V$ such that we have uniformly for all $q\in Q$,
    \begin{equation*}
        \Modofinner{\lambda_{Q/H}(q)f_1, f_2} \leq \Innerprod{\pi_0(l_q)\rmI_S^L(\varphi_0, \indicator_{D_0}), \rmI_S^L(\varphi_0,\indicator_{D_0})},
    \end{equation*}
     where $l_q$ is the $L$-component of $q$ in $Q=LU$.
    \qed
\end{lemma}
Note that the right hand side depends only on the restricted representation $\pi_0|_L$.
We claim that $\pi_0|_L \cong \Ind_S^L(\sigma_0|_S)$.
Indeed, since the unitary representation $\sigma_0$ is trivial on the unipotent radical $U$, the induced representation $\pi_0$ is also trivial on $U$.

The last ingredient is more classical.
The reductive group $S$ acts by conjugation on the pair of real unipotent Lie groups $U\supset V$.
By the theory of adapted bases for nilpotent Lie algebras, there exists an $S$-invariant complementary space $\ulie = \wlie \oplus \vlie$ such that the exponential map induces an $S$-equivariant homeomorphism $\exp: \wlie\to U/V$, cf.\,\cite[Lem 4.7]{benoist-kobayashi2023tempered4}.
Hence, the representation $(\sigma_0|_S, \Lspace^2(U/V))$ is equivalent to the unitary representation $(\tau, \Lspace^2(\ulie/\vlie))$ of $S$ derived from the linear representation of $S$ on the quotient space $\ulie/\vlie \cong \wlie$, as in \eqref{eq def: unit rep on L2 of an alg rep}.
Then \autoref{cor: rho fun and volume decay} implies immediately the following estimates.
\begin{corollary}\label{cor: unif decay sigma0}
    Fix a Cartan decomposition $S=K_S A_S K_S$ with the Cartan projection $\kappa_S: S \to \alie_\slie/W_S$.
    For any compact subset $D \Subset U/V$, there exists a constant $C>0$ such that we have uniformly for all $s\in S$,
    \begin{equation*}
        \Modofinner{\sigma_0(s)\indicator_{D}, \indicator_{D}} \leq C \bigexp{-\rho_{\ulie/\vlie}\kappa_S(s)}.
        \qedineq
    \end{equation*}
\end{corollary}

\subsection{Uniform decay of reductive induction}
Next, we inspect the uniform decay of $\pi_0|_L = \Ind_S^L(\sigma_0|_S)$.

First recall some notation from \autoref{subsec: parabolic subgroups}.
The real reductive group $L$ admits the maximal compact subgroup $K_M$ which is contained in $K$ and the Cartan subspace $A$ which is also a Cartan subspace in $G$. The Cartan projection of $L$, denoted by $\kappa_L: L \to \alie$,
is determined by the positive system $\Sigma_M^+$ and $\rho_L = \rho_M$.
The Weyl group $W_L$ acting on $\alie$ is thus identified with a subgroup of $W_G$.

The following decay estimates bring the local volume decay exponent $\betagh$ into play. The proof relies on a refined control of the preliminary decay estimates in \autoref{subsec: transform}.
\begin{lemma}\label{prop: uniform decay of reductive induction}
    Given any function $\varphi\in \Cc(L)$ and compactum $D \Subset U/V$, we form the vector $f = \rmI_S^L(\varphi, \indicator_{D})$ in the representation space of $\pi_0|_L$ identified with $\Ind_S^L(\sigma_0|_S)$. Then there exists a constant $C>0$ such that we have uniformly for all $l\in L$,
    \begin{equation*}
        \Modofinner{\pi_0(l)f, f} \leq C \bigexp{\frac{1}{2}\!\left(-\rho_{\llie} + (2\rhogh-1)\rhog\right)\!(\kappa_L(l))}.
    \end{equation*}
\end{lemma}
\begin{proof}
    By applying \autoref{lem: first majoration of decay of coefficients} to $\pi_0|_L = \Ind_S^L(\sigma_0|_S)$ and \autoref{lem: decompose each CcG into small ones} to $\varphi$, we have, for some compactum $B\Subset L$, uniformly for all $l\in L$ that
    \begin{equation}\label{eq intm: decay of induced S L U/V}
        \Modofinner{\pi_0(l)f, f} \leq C_1 \bigexp{-\rho_\llie\kappa_L(g)} \int_{S\cap BlB} \Modofinner{\sigma_0(s)\indicator_{D}, \indicator_{D}} \dif s.
    \end{equation}
    Next, we inspect the integral term.
    Since the Cartan subspace $\alie_\slie$ of $S$ lies in some Cartan subspace of $L$ which is conjugate by $L$ to the subspace $\alie$, we can identify $\alie_\slie$ with a subspace of $\alie$.
    Write $X=\kappa_L(l)\in\alie$.
    As in \eqref{eq intm: kappa H of BgB} in the proof of \autoref{lem: upper bound on volume growth in H}, we have
    for some uniform constant $r>0$ that
    \begin{equation*}
        S\cap BlB \subset \bigcup_{w\in W_L} K_S \bigexp{\left(wX+\alie(r)\right)\cap \alie_\slie} K_S.
    \end{equation*}
    Combining this with \autoref{cor: unif decay sigma0} yields the following uniform estimates
    \begin{align}
        &\int_{S\cap BlB} \Modofinner{\sigma_0(s)\indicator_{D}, \indicator_{D}} \dif s 
        \leq C_2 \int_{S \cap BlB} \bigexp{-\rho_{\ulie/\vlie}\kappa_S(s)}\dif s\nonumber\\
        &\leq C_2\sum_{w\in W_L} \int_{K_S \bigexp{\left(wX+\alie(r)\right)\cap \alie_\slie} K_S} \bigexp{-\rho_{\ulie/\vlie}\kappa_S(s)} \dif s\nonumber\\
        &\leq C_3 \sum_{w\in W_L}\int_{\left(wX+\alie(r)\right)\cap \alie_\slie}  \bigexp{\rho_\slie(Y)-\rho_{\ulie/\vlie}(Y)} \dif Y,\label{eq intm: sum W int wX+ar}
    \end{align}
    where the last inequality follows from \autoref{prop: KAK Cartan integral}. 
    But since one has the following equalities and inequality of the rho-functions on $\alie_\slie$.
    \begin{align*}
        \rho_\slie - \rho_{\ulie/\vlie} = \rho_\slie +\rho_{\vlie} - \rho_{\ulie} = \rho_{\hlie} - \left(\rho_\glie- \rho_\llie\right)\!/2 \leq \left(\rhogh - 1/2\right)\!\rhog + \rho_\llie/2,
    \end{align*}
    we deduce from the $W_L$-invariance and the uniform Lipschitz property of the rho-functions $\rhog, \rho_\llie$ on $\alie$ that \eqref{eq intm: sum W int wX+ar} is further bounded from above, uniformly for all $l\in L$, by
    $$C_4 \bigexp{\frac{1}{2}\left(\rho_\llie+(2\rhogh-1)\rhog\right)\!(X)}.$$
    We conclude the proof by feeding back to \eqref{eq intm: decay of induced S L U/V}.
\end{proof}

Recall from \autoref{subsec: spherical fn} that the spherical functions of the real reductive group $L=K_M A K_M$ are given for $\chi\in \alie'$ by
\begin{equation*}
    \Xi^L_{\chi}(l) = \int_{K_M} e^{-(\chi+\rho_L)\eta(l^{-1}k_M)} \dif k_M.
\end{equation*}

Let us write $t^+ := \maxde{t,0}$ for $t\in \bbR$.
\begin{corollary}\label{cor: uniform decay of pi0}
    Let $\chi = (2\rhogh-1)^+ \rho\in \alie'$.
    Under the same assumptions as \autoref{prop: uniform decay of reductive induction}, we have uniformly for all $l\in L$,
    \begin{equation*}
        \Modofinner{\pi_0(l)f, f} \leq C\sum_{w\in W_G} \Xi^L_{w\chi}(l).
    \end{equation*}
\end{corollary}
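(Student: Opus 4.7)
The plan is to rewrite the bound from Proposition \ref{prop: uniform decay of reductive induction} as a finite sum of spherical functions of $L$. Set $X := \kappa_L(l)$, which may be taken in the positive Weyl chamber $\alie^+_L$ of $L$ inside the common Cartan subspace $\alie$. By Example \ref{eg: rhoh in case of adjoint rep} applied to $L$, $\rho_\llie(X) = 2\rho_L(X)$ on $\alie^+_L$. For the rho-function $\rho_\glie$ of $L$ acting on $\glie$ by the adjoint action, the eigenvalues of $\mathrm{ad}\,X$ on $\glie$ are $\{\alpha(X)\}_{\alpha \in \Sigma}$ with the usual multiplicities, whence $\rho_\glie(X) = \max_{w \in W_G} 2\rho(wX)$ on all of $\alie$, in direct analogy with the formula for $G$ acting on itself.

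I would then split into two cases according to the sign of $2\rhogh - 1$. When $2\rhogh \leq 1$, we have $\chi = 0$; the contribution $(2\rhogh - 1)\rho_\glie(X)/2 \leq 0$ can simply be discarded, and Lemma \ref{lem: spherical functions} applied to $L$ with $\chi' = 0 \in (\alie^*_L)^+$ gives $\Xi^L_0(l) \geq \exp(-\rho_L(X))$; since every $\Xi^L_{w\chi}$ equals $\Xi^L_0$, the conclusion is immediate. When $2\rhogh > 1$, so that $\chi = (2\rhogh-1)\rho$, the dual action $(w\chi)(X) = \chi(w^{-1}X)$ lets us rewrite the exponent of Proposition \ref{prop: uniform decay of reductive induction} as $-\rho_L(X) + \max_{w \in W_G}(w\chi)(X)$. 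Using $\exp(\max_w a_w) \leq \sum_w \exp(a_w)$, it suffices to prove for each $w \in W_G$ the pointwise lower bound
\[
\Xi^L_{w\chi}(l) \;\geq\; C^{-1}\,\exp\!\bigl((w\chi-\rho_L)(X)\bigr).
\]

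For this, I would apply Lemma \ref{lem: W_G invariance, spherical functions} to $L$: choose $w_L \in W_L$ so that $\chi'' := w_L w\chi$ lies in $(\alie^*_L)^+$, giving $\Xi^L_{w\chi}(l) = \Xi^L_{\chi''}(l)$. Then Lemma \ref{lem: spherical functions} for $L$ yields $\Xi^L_{\chi''}(l) \geq \exp((\chi''-\rho_L)(X))$, and the remaining inequality $\chi''(X) \geq (w\chi)(X)$ is equivalent to $\chi''(X) \geq \chi''(w_L X)$ for $\chi'' \in (\alie^*_L)^+$ and $X \in \alie^+_L$, the standard dominant pairing inequality of root systems. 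The main technical obstacle in this approach is precisely the Weyl group bookkeeping in Case 2: the sum in the target is indexed by $W_G$ while the lower bound on $\Xi^L_{\chi'}$ only applies when $\chi' \in (\alie^*_L)^+$, so the passage between $W_G$ and $W_L$ via $W_L$-translation and dominance must be carried out carefully.
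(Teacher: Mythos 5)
Your argument is correct and follows essentially the same route as the paper: rewrite the exponent of \autoref{prop: uniform decay of reductive induction} as $-\rho_L + \max_{w\in W_G} w\chi$ on the $L$-dominant chamber and compare each term with $\Xi^L_{w\chi}$ via \autoref{lem: spherical functions}. The only (welcome) differences are that you handle the case $\betagh\leq 1/2$ directly from the lower bound $\Xi^L_0(l)\geq e^{-\rho_L(\kappa^+_L(l))}$ instead of invoking temperedness, and that you spell out the $W_L$-dominance bookkeeping needed to apply the lower bound of \autoref{lem: spherical functions} to the non-dominant weights $w\chi$, a step the paper leaves implicit.
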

\begin{proof}
    If $\rhogh\leq 1/2$, then \autoref{prop: uniform decay of reductive induction} implies that $\pi_0|_L$ is tempered. We have $\chi=0$ and we conclude by \autoref{thm: temperedness, equivalent formulations}.

    From now on assume $\rhogh > 1/2$.
    By \autoref{eg: rhoh in case of adjoint rep}, we have the equality $\rho_\llie\circ\kappa_L = 2\rho_L\circ\kappa_L$ on $L$ and $\rhog = 2\max_{w\in W_G} w\rho$ on $\alie$. Thus, the uniform estimates of \autoref{prop: uniform decay of reductive induction} yields uniformly for all $l\in L$,
    \begin{align*}
        \Modofinner{\pi_0(l)f, f} \leq C \max_{w\in W_G} \bigexp{\left(-\rho_L + (2\rhogh-1)w\rho\right)\!(\kappa_L(l))}.
    \end{align*}
    But then \autoref{lem: spherical functions} applied to the reductive group $L$ gives the uniform majoration by spherical functions
    \begin{equation*}
        \Modofinner{\pi_0(l)f, f} \leq C \max_{w\in W_G} \Xi^L_{w\chi}(l) \leq C\sum_{w\in W_G}\Xi^L_{w\chi}(l).
        \qedhere
    \end{equation*}
\end{proof}

To summarize, the domination of $\lambda_{Q/H}$ by $\pi_0$ (\autoref{prop: domination of Q/H}) and the decay estimates of $\pi_0$ (\autoref{cor: uniform decay of pi0}) together yield the following uniform decay estimates of the quasi-regular representation $\lambda_{Q/H}$.
\begin{corollary}\label{cor: major of q/h}
    Let $\chi_0 = (2\rhogh-1)^+ \rho$. Then for any $\xi_1,\xi_2\in \Cc(Q/H)$, there exists a constant $C>0$ such that we have uniformly for all $q\in Q$,
    \begin{equation*}
        \Modofinner{\lambda_{Q/H}(q)\xi_1, \xi_2} \leq C \sum_{w\in W_G} \Xi^L_{w\chi_0}(l_q),
    \end{equation*}
    where $l_q$ denotes the $L$-component of $q$ in $Q=LU$. \qed
\end{corollary}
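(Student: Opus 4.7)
The plan is to obtain this statement as an immediate concatenation of the two preceding results, namely the domination \autoref{prop: domination of Q/H} coming from Benoist--Kobayashi and the spherical-function bound \autoref{cor: uniform decay of pi0} that we have just established.

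More precisely, given any $\xi_1, \xi_2 \in \Cc(Q/H)$, I would start by applying \autoref{prop: domination of Q/H}, which yields functions $\varphi_1, \varphi_2 \in \Cc(L)$ and compacta $D_1, D_2 \Subset U/W$ such that
\begin{equation*}
    \Modofinner{\lambda_{Q/H}(q)\xi_1, \xi_2} \leq \Innerprod{\pi_0(l_q)\rmI_S^L(\varphi_1, \indicator_{D_1}), \rmI_S^L(\varphi_2,\indicator_{D_2})}
\end{equation*}
uniformly in $q \in Q$. The right-hand side is precisely a matrix coefficient of $\pi_0|_L$ between two vectors of the form $\rmI_S^L(\varphi_i, \indicator_{D_i})$, which is exactly the input required by \autoref{cor: uniform decay of pi0}.

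Applying that corollary to these vectors then produces a constant $C > 0$ such that
\begin{equation*}
    \Innerprod{\pi_0(l_q)\rmI_S^L(\varphi_1, \indicator_{D_1}), \rmI_S^L(\varphi_2,\indicator_{D_2})} \leq C \sum_{w \in W_G} \Xi^L_{w\chi}(l_q)
\end{equation*}
uniformly in $l_q \in L$, with $\chi = (2\rhogh - 1)^+ \rho$. Chaining this with the previous inequality gives exactly the desired bound.

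There is no real obstacle here, since all the substantive work — building the intermediate subgroups $S \subset F \subset Q$, the domination of $\lambda_{F/H}$ by $\sigma_0$, the induction-preserves-domination step, and the spherical-function estimate encoding $\rhogh$ — has already been carried out in \autoref{lem: nice intermediate subgroups} through \autoref{cor: uniform decay of pi0}. The only thing worth noting is that the constant $C$ and the explicit $\varphi_i, D_i$ depend on the chosen $\xi_1, \xi_2$, so the estimate is uniform in $q$ but not in the test vectors; this is harmless for the subsequent application, where we will fix such test vectors and induce from $Q$ up to $G$.
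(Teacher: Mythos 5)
Your proposal is correct and matches the paper's intended argument exactly: the corollary is stated with an immediate \qed precisely because it is the direct concatenation of \autoref{prop: domination of Q/H} (which produces the vectors $\rmI_S^L(\varphi_i,\indicator_{D_i})$ dominating the coefficient of $\lambda_{Q/H}$) with \autoref{cor: uniform decay of pi0} (which bounds that coefficient of $\pi_0$ by $\sum_{w\in W_G}\Xi^L_{w\chi}$). Your remark that the constant and the auxiliary data depend on $\xi_1,\xi_2$ but the estimate is uniform in $q$ is also the right reading of the statement.
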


\subsection{Parabolic induction and spherical functions}\label{subsec: proof for alg subgp}
With the premise of \autoref{cor: major of q/h}, the next step of the proof is to establish uniform decay estimates for a unitary representation of $G$ which is induced from the parabolic subgroup $Q$ (\autoref{prop: uniform decay of lambdagh}). This will allow use to control the coefficient decay exponent of algebraic subgroups.

The following lemma is a variant of \cite[Prop 7.14]{knapp1986representation}, highlighting the utility of spherical functions. One should think of both sides of \eqref{eq res: parabolic induction of spherical function} as the matrix coefficients of certain parabolically induced representations.

\begin{lemma}\label{lem: parabolic induction of spherical function}
    Let $B\Subset G$ be a compactum of nonempty interior and let $\chi\in\alie'$ be a real linear form. Then there exists a constant $C=C(B,\chi)>0$, such that uniformly for all $g\in G$, we have
    \begin{equation}\label{eq res: parabolic induction of spherical function}
        e^{-2\rho\kappa(g)}\int_{Q\cap BgB} \Xi^L_{\chi}(l_q^{-1})\, e^{\rhoQ\eta(q)}\dif q \leq C\, \Xi^G_{\chi}(g),
    \end{equation}
    where $\dif q$ is the left Haar measure on $Q$, and $l_q$ denotes the $L$-component of $q\in Q=LU$.
\end{lemma}
\begin{proof}
    Recall the notation from \autoref{subsec: parabolic subgroups}. We have $N_Q=U$ as the unipotent radical of $Q$.
    Since the group $K_M$ normalizes $N_Q$ and the map $\eta(\cdot)$ is $N_Q$-right-invariant, for any $q \in Q$ we have
    \begin{align*}
        \Xi^L_{\chi}(l_q^{-1}) = \int_{K_M} e^{-(\chi+\rho_L)\eta(l_q k_M)} \dif k_M= \int_{K_M} e^{-(\chi+\rho_L)\eta(qk_M)} \dif k_M.
    \end{align*}
    The integral term in the LHS of \eqref{eq res: parabolic induction of spherical function} is thus bounded from above by
    \begin{equation}\label{eq intm: int Q int KM}
        \int_{Q} \int_{K_M} \indicator_{BgB}(q) \,e^{-(\chi+\rho_L)\eta(qk_M)+\rhoQ\eta(q)} \dif k_M \dif q.
    \end{equation}
    Up to enlarging $B$, we can assume the compactum $B$ to be $K$-bi-invariant. Note that the function 
    $$x\mapsto \int_{K_M} \indicator_{BgB}(x) \,e^{-(\chi+\rho_L)\eta(xk_M)+\rhoQ\eta(x)} \dif k_M$$ 
    is $K$-left-invariant and integrable on $G$.
    By the integral formula for the decomposition $G=KQ$ (\autoref{prop: KQ integral, Q psg}), \eqref{eq intm: int Q int KM} equals
    \begin{equation}\label{eq intm: int BgB int KM}
        \int_{BgB} \int_{K_M} e^{-(\chi+\rho_L)\eta(xk_M)-\rhoQ\eta(x)} \dif k_M \dif x.
    \end{equation}
    But since we have $BgB \subset K e^{\kappa(g)+\alie(r)}K$ (\autoref{lem: compact error in cartan projection}) for some constant $r=r(B)$, the integral formula for the Cartan decomposition (\autoref{prop: KAK Cartan integral}) yields that \eqref{eq intm: int BgB int KM} is bounded from above by
    \begin{align}
        &\int_K \int_{\kappa(g)+\alie(r)} \int_K \int_{K_M} e^{-(\chi+\rho_L)\eta(k_1 e^X k_2 k_M)-\rhoQ\eta(k_1 e^X k_2)+2\rho(X)}\dif k_M\dif k_2 \dif X \dif k_1\nonumber \\
        &= \int_{\kappa(g)+\alie(r)} \int_K \int_{K_M} e^{-(\chi+\rho_L)\eta(e^X k k_M)-\rhoQ\eta(e^X k)+2\rho(X)} \dif k_M\dif k\dif X \nonumber \\
        &\leq C_1 e^{2\rho\kappa(g)}\int_{\kappa(g)+\alie(r)} \int_K \int_{K_M} e^{-(\chi+\rho_L)\eta(e^X k k_M)-\rhoQ\eta(e^X k)} \dif k_M\dif k\dif X,\label{eq intm: int kappa(g) int K int KM}
    \end{align}
    uniformly for all $g\in G$. But since $\rhoQ\eta(xk_M) = \rhoQ\eta(x)$ for any $k_M \in K_M$ (\autoref{cor: rhoQeta is KM right invariant}), we deduce from Fubini and $\rho=\rhoQ+\rho_L$ that
    \begin{align*}
        &\int_K \int_{K_M} e^{-(\chi+\rho_L)\eta(e^X k k_M)-\rhoQ\eta(e^X k)} \dif k_M\dif k\\
        &= \int_{K_M} \int_{K} e^{-(\chi+\rho_L)\eta(e^X k)-\rhoQ\eta(e^X k k_M^{-1})} \dif k \dif k_M\\
        &= \int_{K} e^{-(\chi + \rho)\eta(e^X k)} \dif k = \Xi^G_{\chi}(e^X).
    \end{align*}
    Feeding back to \eqref{eq intm: int kappa(g) int K int KM}, we obtain the upper bound on the integral term in the LHS of \eqref{eq res: parabolic induction of spherical function} by
    \begin{equation*}
        C_1 e^{2\rho\kappa(g)} \int_{\kappa(g)+\alie(r)} \Xi^G_{\chi}(e^X) \dif X.
    \end{equation*}
    For any $X\in \kappa(g)+\alie(r)$, there exist some $Y\in \alie(r)$ and $k'\in K$ for which $\eta(e^X k) = \eta(e^Y k') + \eta(e^{\kappa(g)}k)$. The expression and the $K$-bi-invariance of the spherical function yield that uniformly for all $X\in \kappa(g)+\alie(r)$,
    \begin{equation*}
        \Xi^G_{\chi}(e^X) \leq C_2\, \Xi^G_{\chi}(e^{\kappa(g)}) = C_2\,\Xi^G_{\chi}(g),
    \end{equation*}
    whence we obtain the inequality \eqref{eq res: parabolic induction of spherical function}.
\end{proof}

\begin{proposition}\label{prop: uniform decay of lambdagh}
    Let $\chi_0 = (2\rhogh-1)^+\rho$. Then for any functions $f_1, f_2\in \Cc(G/H)$, there exists a constant $C>0$ such that we have uniformly for all $g\in G$,
    \begin{equation*}
        \Modofinner{\lambdagh(g)f_1, f_2} \leq C\,\Xi^G_{\chi_0}(g).
    \end{equation*}
\end{proposition}
\begin{proof}
    Since $\lambdagh = \Ind_Q^G \lambda_{Q/H}$ (\autoref{lem: induction by stage}), we can bound the coefficient on the left hand side from above by passing from $f_i$ to certain positive functions of the form $\rmI_Q^G(\varphi_i, \xi_i)$, with $\varphi_1,\varphi_2\in \Cc(G)$ and $\xi_1,\xi_2\in \Cc(Q/H)$.
    Then \autoref{lem: first majoration of decay of coefficients} and \autoref{lem: decompose each CcG into small ones} yield for some given compactum $B\Subset G$ and uniformly for all $g\in G$ that
    \begin{equation*}\label{eq intm: 1 major of lambdagh}
        \Modofinner{\lambdagh(g)f_1,f_2} \leq C_1e^{-2\rho\kappa(g)} \int_{Q\cap BgB} \Modofinner{\xi_1, \lambda_{Q/H}(q^{-1})\xi_2} \dif\nu_Q(q).
    \end{equation*}
    By \autoref{cor: major of q/h} we have $\Modofinner{\xi_1, \lambda_{Q/H}(q^{-1})\xi_2} \leq C_2 \sum_{w\in W_G} \Xi^L_{w\chi}(l_q^{-1})$, whence uniformly for $q\in Q$,
    \begin{equation*}
        \Modofinner{\lambdagh(g)f_1,f_2} \leq C_3 e^{-2\rho\kappa(g)} \int_{Q\cap BgB} \sum_{w\in W_G} \Xi^L_{w\chi_0}(l_q^{-1})\, e^{\rhoQ\eta(q)}\dif q,
    \end{equation*}
    where we note that the symmetric measure $\dif\nu_Q(q)=e^{\rhoQ\eta(q)}\dif q$.
    Then \autoref{lem: parabolic induction of spherical function} yields uniformly for all $g\in G$,
    \begin{equation*}
        \Modofinner{\lambdagh(g)f_1, f_2} \leq C_4\, \Xi^G_{\chi}(g),
    \end{equation*}
    where we use the fact that $\Xi^G_{w\chi}=\Xi^G_{\chi}$ for any $w\in W_G$ (\autoref{lem: W_G invariance, spherical functions}).
\end{proof}

As a result, we obtain an upper bound of the decay exponent $\thetagh$, and by combining the lower bound obtained in the previous section we have the following result.
\begin{proposition}\label{prop: eq for alg subgps}
    For any algebraic subgroup $H$ of $G$, we have $$ \thetagh = \maxde{\betagh,\, 1/2}.$$
\end{proposition}
\begin{proof}
    By \autoref{prop: uniform decay of lambdagh} and \autoref{lem: spherical functions}, for any $\varepsilon>0$ and any functions $f_1, f_2\in \Cc(G/H)$, we have the uniform decay for $g\in G$,
    \begin{align*}
        \Modofinner{\lambdagh(g)f_1, f_2} \leq C \bigexp{-2\!\left(1-\varepsilon-\maxof{\rhogh, 1/2}\right)\!\rho\kappa(g)}.
    \end{align*}
    By the definition of $\thetagh$, we get $\thetagh\leq \maxof{\rhogh, 1/2}$.
    We conclude thanks to \autoref{prop: beta <= theta} which states that $\betagh\leq \thetagh$.
\end{proof}

\subsection{Uniform and local decay exponents}\label{subsec: extend to connected subgroups}
In this last part, we complete the proof of \autoref{mythm: rho theta delta}.
We start with some definitions and tools from representation theory.
\begin{definition}
    Given $p\in [1,\infty)$, a unitary representation $(\pi,\calV)$ of $G$ is
    \begin{itemize}
        \item \emph{strongly $L^p$} if $\Innerprod{\pi(\cdot)v,v}\in\Lspace^p(G)$ for a dense subspace of $v$;
        \item \emph{almost $L^p$} if it is strongly $\Lspace^{p+\varepsilon}$ for all $\varepsilon>0$;
        \item \emph{totally $L^p$} if all matrix coefficients of $\pi$ are $\Lspace^p$;
        \item \emph{totally $L^{p+}$} if it is totally $\Lspace^{p+\varepsilon}$ for all $\varepsilon>0$.
    \end{itemize}
\end{definition}
\begin{remark*}
    In the literature, the terms \emph{strongly $L^p$} and \emph{almost $L^p$} are widely used (e.g.\,\cite{cowling-haagerup1988almost,shalom2000rigidity}), though their precise meanings vary across authors;
    being \emph{totally $L^p$} often appears in the guise of $A_\pi \subset \Lspace^p(G)$ (e.g.\,\cite{cowling1978the-kunze-stein, samei-wiersma2024exotic}). 
\end{remark*}

\begin{definition}
    Given a unitary representation $\pi$ of $G$, define $p(\pi)$ to be the minimal $p\geq 1$ such that $\pi$ is totally $\Lspace^{p+}$. If such $p$ does not exist (e.g.\,if $\pi=1_G$ with $G$ noncompact), we put $p(\pi)=\infty$.
\end{definition}
\begin{remark*}
    For $G$ noncompact, we always have $p(\pi)\geq 2$ for any unitary representation $\pi$. For semisimple $G$, we have $p(\lambda_{G/H})\geq p_{G/H}$ by definition, but in general it can happen that $p(\lambda_{G/H})=2> p_{G/H}$, e.g.\,if $H$ is compact.
\end{remark*}

Recall from \autoref{subsec: weak containment} that $\sigma\prec \pi$ denotes that $\sigma$ is weakly contained in $\pi$ as unitary representations of $G$.

\begin{lemma}\label{lem: Lp carries to weakly contained}
    If $\sigma\prec \pi$ and $\pi$ is totally $\Lspace^{p}$, then $\sigma$ is also totally $\Lspace^p$. As a result, $p(\sigma)\leq p(\pi)$.
\end{lemma}
\begin{proof}
    By \cite[Lem 27]{kunze-stein1960uniformly}, $\pi$ being totally $\Lspace^p$ is equivalent to the existence of a constant $C_p>0$ such that $\opnorm{\pi(f)}\leq C_p \Norm{f}_{p'}$ for all $f\in\Cc(G)$, where $1/p+1/p'=1$. But $\sigma\prec\pi$ implies $\opnorm{\sigma(f)}\leq \opnorm{\pi(f)}$ (\autoref{fact on weak containment}), whence $\opnorm{\sigma(f)}\leq C_p \Norm{f}_{p'}$ for all $f\in\Cc(G)$, i.e.\,$\sigma$ is totally $\Lspace^p$. 
\end{proof}

\begin{lemma}\label{lem: theta 1 2 herz majoration}
    Let $G$ be a noncompact real semisimple algebraic group. For any closed subgroups $H_2<H_1<G$, we have $\theta_{G/H_2}\leq \theta_{G/H_1}$.
\end{lemma}
\begin{proof}
    Herz's \emph{principe de majoration}, cf.\,\cite[Lem 1.4]{cowling1978the-kunze-stein}, \cite[\S 3.1]{benoist-kobayashi2022tempered2}.
\end{proof}

\begin{lemma}[{\cite[Thm 5.3]{samei-wiersma2024exotic}}]\label{lem: samei-wiersma semisimple}
    For a noncompact real semisimple algebraic group $G$ and $2 \leq p <\infty$, a unitary representation being almost $\Lspace^p$ is equivalent to being totally $\Lspace^{p+}$. Hence, for any closed subgroup $H$,
    \begin{enumerate}
        \item if $p_{G/H}\geq 2$, then $p_{G/H}=p(\lambda_{G/H})$;
        \item if $p(\lambda_{G/H})>2$, then $p_{G/H}=p(\lambda_{G/H})$.\qed
    \end{enumerate}
\end{lemma}

\begin{proposition}\label{lem: coamenable subgps have same exponent}
    Let $G$ be a noncompact real semisimple algebraic group.
    For any closed subgroups $H_2<H_1<G$ with $\Lspace^2(G/H_1)$ non-tempered, if $H_2$ is co-amenable in $H_1$ (meaning $1_{H_1}\prec_{H_1}\lambda_{H_1/H_2}$), then $\theta_{G/H_1}=\theta_{G/H_2}$. In particular, $\delta_{G/H_1}=\delta_{G/H_2}$.
\end{proposition}
\begin{remark*}
    The non-tempered assumption cannot be simply removed.
    For $H_2$ trivial and $H_1$ a minimal parabolic subgroup, the exponents $\theta_{G/H_1}=1/2$ and $\theta_{G/H_2}=0$ are different despite co-amenability.
\end{remark*}
\begin{proof}
    We have proven $\theta_{G/H_1}\geq \theta_{G/H_2}$ in \autoref{lem: theta 1 2 herz majoration}.

    For the other direction, if $H_2$ is co-amenable in $H_1$, i.e.\,$1_{H_1}\prec \lambda_{H_1/H_2}$, then by unitary induction from $H_1$ to $G$ we obtain $\lambda_{G/H_1}\prec \lambda_{G/H_2}$. Thus by \autoref{lem: Lp carries to weakly contained}, we have $p(\lambda_{G/H_1}) \leq p(\lambda_{G/H_2})$. But since $\Lspace^2(G/H_1)$ is non-tempered, \autoref{mythm: theta and delta} implies that $p_{G/H_1} > 2$, but then \autoref{lem: samei-wiersma semisimple} further implies $2 < p_{G/H_1} \leq p(\lambda_{G/H_1}) \leq p(\lambda_{G/H_2}) = p_{G/H_2}$.
    By the equality $\theta_{G/H}=1-1/p_{G/H}$ (\autoref{mythm: theta and delta}), we conclude $\theta_{G/H_1}\leq \theta_{G/H_2}$.
\end{proof}

Finally, we can complete the proof of \autoref{mythm: rho theta delta}.
\begin{proof}[Proof of \autoref{mythm: rho theta delta}]
    Let $H$ be a closed connected subgroup of the real semisimple algebraic group $G$.
    By Chevalley's \emph{théorie des répliques}, there exist two algebraic subgroups $H_1, H_2$ of $G$ which satisfy 
    $H^0_1 \subset H \subset H_2$ and $\hlie_1 = [\hlie, \hlie] = [\hlie_2, \hlie_2]$,
    where $H_1^0$ is the identity component of $H_1$ with $[H_1:H_1^0]<\infty$. Hence we have $\theta_{G/H_1^0}=\theta_{G/H_1}$ and $\beta_{G/H_1^0}=\beta_{G/H_1}$.

    First assume that $\Lspace^2(G/H)$ is tempered, so that $\thetagh\leq 1/2$. As $H$ is co-amenable in $H_2$, we have $\lambda_{G/H_2}\prec \lambda_{G/H}$ by induction, so $\Lspace^2(G/H_2)$ is also tempered and $\beta_{G/H}\leq \beta_{G/H_2} \leq 1/2$ by \autoref{prop: eq for alg subgps}.
    
    Next assume that $\Lspace^2(G/H)$ is non-tempered. 
    Since $H_1$ is co-amenable in $H_2$, we can deduce from \autoref{lem: coamenable subgps have same exponent} and \autoref{prop: eq for alg subgps} that $\beta_{G/H_1}=\theta_{G/H_1}=\theta_{G/H_2}=\beta_{G/H_2}$.
    Now since $H^0_1\subset H \subset H_2$, \autoref{lem: theta 1 2 herz majoration} gives $\theta_{G/H_1}\leq \theta_{G/H}\leq \theta_{G/H_2}$.
    Moreover, since $\hlie_1\subset \hlie\subset \hlie_2$, we have $\beta_{G/H_1}\leq \beta_{G/H} \leq \beta_{G/H_2}$. Hence, $\theta_{G/H}=\beta_{G/H}$ above $1/2$.
\end{proof}

\bibliographystyle{abbrv}
{\footnotesize \bibliography{reflist.bib}}

\vspace*{0.5em}
\noindent{\scshape Y. Benoist}: CNRS, Université Paris-Saclay, Orsay, France\\
Email: \texttt{yves.benoist@cnrs.fr}
\vspace*{0.5em}

\noindent{\scshape S. Liang}: Université Paris-Saclay, Orsay, France\\
Email: \texttt{siwei.liang@universite-paris-saclay.fr}

\end{document}